\newcommand{\mmid}{\mathop{\|}}
\newcommand{\lddot}{\mathop{..}}
\DeclareMathOperator{\limProb}{lim\,Prob}
    \newcommand{\BA}{{\mathbb {A}}} 
    \newcommand{\BC}{{\mathbb {C}}} 
     \newcommand{\BF}{{\mathbb {F}}}
     \newcommand{\BP}{{\mathbb {P}}}
    \newcommand{\BQ}{{\mathbb {Q}}} \newcommand{\BR}{{\mathbb {R}}}
     \newcommand{\BT}{{\mathbb {T}}}
     \newcommand{\BZ}{{\mathbb {Z}}}
    \newcommand{\CE}{{\mathcal {E}}} 
     \newcommand{\CH}{{\mathcal {H}}}
     \newcommand{\CL}{{\mathcal {L}}}
    \newcommand{\CO}{{\mathcal {O}}}
     \newcommand{\RT}{{\mathrm {T}}}
    \newcommand{\fm}{{\mathfrak{m}}} 
     \newcommand{\fp}{{\mathfrak{p}}}
     \newcommand{\fX}{{\mathfrak{X}}}
     \newcommand{\fZ}{{\mathfrak{Z}}}
    \newcommand{\Aut}{{\mathrm{Aut}}}
    \DeclareMathOperator{\corank}{corank}
    \newcommand{\Cl}{{\mathrm{Cl}}}
    \newcommand{\diag}{{\mathrm{diag}}}
    \newcommand{\End}{{\mathrm{End}}}
    \newcommand{\Gal}{{\mathrm{Gal}}}
    \newcommand{\id}{{\mathrm{id}}}\renewcommand{\Im}{{\mathrm{Im}}}
    \newcommand{\Ker}{{\mathrm{Ker}}}
    \newcommand{\loc}{{\mathrm{loc}}}
    \newcommand{\ord}{{\mathrm{ord}}} \DeclareMathOperator{\rank}{rank}
    \newcommand{\LHS}{{\mathrm{LHS}}}
    \renewcommand{\mod}{\ \mathrm{mod}\ }
    \newcommand{\RHS}{{\mathrm{RHS}}}
    \newcommand{\Rows}{{\mathrm{Rows}}}
    \newcommand{\Sel}{{\mathrm{Sel}}} 
    \newcommand{\supp}{{\mathrm{supp}}}
    \newcommand{\sign}{{\mathrm{sign}}}
    \newcommand{\tor}{{\mathrm{tor}}}
\newcommand{\matrixx}[4]{\begin{pmatrix}
#1 & #2 \\ #3 & #4
\end{pmatrix} }        % 2*2 matrix
    \font\cyr=wncyr10
    \newcommand{\Sha}{\hbox{\cyr X}}
    \newcommand{\wh}{\widehat}
    \newcommand{\pair}[1]{\langle {#1} \rangle}
    \newcommand{\ov}{\overline}
    \newcommand{\sk}{\medskip}
    \newcommand{\lra}{\longrightarrow}
    \newcommand{\ra}{\rightarrow} 
    \newcommand{\bs}{\backslash}
    \newcommand{\s}{\sk\noindent}
    \theoremstyle{plain}
    \newtheorem{thm}{Theorem}[section] 
    \newtheorem{lem}[thm]{Lemma}  \newtheorem{prop}[thm]{Proposition}
     \newtheorem{lem-defn}[thm]{Lemma-Definition}
\theoremstyle{remark} \newtheorem{remark}[thm]{Remark}
\theoremstyle{remark} 
\theoremstyle{remark} 
\theoremstyle{remark} 
    \numberwithin{equation}{section}
    \newcommand{\lrd}[2]{\left(\frac{#1}{#2}\right)}
     \newcommand{\lrdd}[2]{\left[\frac{#1}{#2}\right]}
\begin{document}

\title{Quadratic twists of tiling number elliptic curves}
\thanks{Keqin Feng is supported by NSFC with no.12031011.}

\author{Keqin Feng}
\address{Department of Mathematical Sciences,
Tsinghua University, Beijing, China}
\email{fengkq@tsinghua.edu.cn}

\author{Qiuyue Liu}
\address{Center for Applied Mathematics, Tianjin University,
No. 92 Weijin Road, Tianjin 300072, China}
\email{17864309562@163.com}

\author{Jinzhao Pan}
\address{Institut f\"ur Algebra, Zahlentheorie und Diskrete Mathematik,
Leibniz Universit\"at Hannover, Hannover, Germany}
\email{jinzhao.pan@math.uni-hannover.de}

\author{Ye Tian}
\address{Morningside Center of Mathematics,
Chinese Academy of Sciences, Beijing, China}
\email{ytian@math.ac.cn}

\begin{abstract}
A positive integer $n$ is called a tiling number if the equilateral triangle can be dissected into $nk^2$ congruent triangles for some integer $k$.
An integer $n>3$ is tiling number if and only if at least one of the elliptic curves
$E^{(\pm n)}:\pm ny^2=x(x-1)(x+3)$ has positive Mordell-Weil rank.
Let $A$ denote one of the two curves.
In this paper, using Waldspurger formula and an induction method,
for $n\equiv 3,7\mod 24$ positive square-free,
as well as some other residue classes,
we express the parity of analytic Sha of $A$ in terms of the
genus number $g(m):=\#2\Cl(\BQ(\sqrt{-m}))$ as $m$ runs over factors of $n$.
Together with $2$-descent method which express $\dim_{\BF_2}\Sel_2(A/\BQ)/A[2]$
in terms of the corank of a matrix of $\BF_2$-coefficients,
we show that for $n\equiv 3,7\mod 24$ positive square-free,
the analytic Sha of $A$ being odd is equivalent to that
$\Sel_2(A/\BQ)/A[2]$ being trivial, as predicted by the BSD conjecture.

We also show that,
among the residue classes $3$, resp.~$7\mod 24$,
the subset of $n$ such that both of $E^{(n)}$ and $E^{(-n)}$ have analytic Sha odd
is of limit density $0.288\cdots$ and
$0.144\cdots$, respectively, in particular, they are non-tiling numbers.
This exhibits two new phenomena on tiling number elliptic curves:
firstly, the limit density is different from
the general phenomenon on elliptic curves
predicted by Bhargava-Kane-Lenstra-Poonen-Rains;
secondly, the joint distribution has different behavior among
different residue classes.
\end{abstract}

\maketitle

\tableofcontents

\section{Introduction and main results}

A positive integer $n$ is called a {\em tiling number} if the equilateral triangle can be dissected into $nk^2$ congruent triangles for some integer $k$.
To determine whether a given integer is a tiling number,
one only needs to consider square-free integers.
It is known that a square-free integer $n>3$ is a
tiling number if and only if at least one of the elliptic curves (called tiling number curves)
$$
\pm ny^2=x(x-1)(x+3)
$$
has positive Mordell-Weil rank \cite{Lac}.
Tiling number curves form  an example of quadratic twist family of elliptic curves.
We study the $2$-adic aspect of arithmetic of this family
to compare with the family of congruent number curves
\cite{Tian} \cite{TYZ}.

In general, for a quadratic twist family of elliptic curves, we define an equivalence relation to divide the family into finitely many classes. The distribution of $2$-Selmer among these equivalence classes can be very different for different quadratic families. For example, among all positive integers $n\equiv 1\mod 12$, the curves $-ny^2=x(x-1)(x+3)$ always have non-trivial 2-Selmer modulo torsion (see Proposition \ref{descent non-trivial example}),
but it is not the case for congruent number elliptic curves $ny^2=x^3-x$:
for any congruence class of $n$ such that the curve is of sign $+1$,
there are always positive density of $n$ such that
the curve has trivial 2-Selmer modulo torsion \cite{HB2} \cite{SD} \cite{Kane}
\cite{Smith}.
For more comparison with congruent number curves, see Remark \ref{comparison to CNP}.

It would be interesting to study the distribution of $2$-Selmer groups for tiling number curves.
In this paper,  we mainly  focus on the minimal $2$-Selmer case,
emphasizing the difference from congruent number curves.
The main result of this paper is

\smallskip
\s{\bf Theorem A}. {\em Let $\fX$ be the family of elliptic curves $ny^2=x(x-1)(x+3)$ (resp. $-ny^2=x(x-1)(x+3)$) with positive square-free $n\equiv 3\mod 24$. For $A\in \fX$, the followings are equivalent:
\begin{itemize}\item[(1)] The $2$-Selmer group of $A$ modulo torsion is trivial;
\item[(2)] $A$ has analytic rank $0$ and ananytic Sha odd;
\item[(3)] $\BQ(\sqrt{-n})$ has no ideal classes of exact order $4$;
\item[(4)] let $Q(x, y, z)=x^2+3y^2+4z^2$ (resp. $6x^2+y^2+2z^2$),  $\ord_2(r_Q(n/3))$ equals the number of prime factors of $n/3$, where
$$r_Q(m):=\# \left\{ (x, y, z)\in \BZ^3\ \Big|\ Q(x, y, 2z)=m\right\} -\frac{1}{2}\# \left\{ (x, y, z)\in \BZ^3\ \Big|\ Q(x, y, z)=m\right\}.$$

\end{itemize}
Moreover,  the subset of $\fX$ of curves $A$ satisfying the above equivalence conditions  has limit density $$\delta:=\prod_{i=1}^\infty(1-2^{-i})=0.288788\cdots.$$}

\smallskip

The equivalence between (1) and (3) is by $2$-descent (Proposition \ref{descent main 3}).
To have the equivalence between (2) and (3) (Theorem \ref{analytic sha main}),
we employ the Waldspurger formula and the induction method introduced by one of the authors in the study of congruent number problem \cite{Tian} \cite{TYZ}.
However, in our current situation the curves are non-CM which cause extra difficulty. The equivalence of (2) and (4)
(Theorem \ref{ternary 7})
follows from Waldspurger's work.
The limit density is Theorem \ref{positive density 3}.
Note that this number $\delta$ is different from
the general phenomenon on elliptic curves
predicted by Bhargava-Kane-Lenstra-Poonen-Rains \cite{BKLPR},
also different from
the results on quadratic twists of elliptic curves
by Heath-Brown \cite{HB2}, Swinnerton-Dyer \cite{SD} and Kane \cite{Kane},
which agree with BKLPR's prediction.

\medskip

A statement similar to Theorem A (with item (3) more complicated) holds for the case $n\equiv 7\mod 24$
((1)$\Leftrightarrow$(3) is Proposition \ref{descent main}
and Theorem \ref{comparison 7},
(2)$\Leftrightarrow$(3) is Theorem \ref{analytic sha main},
(2)$\Leftrightarrow$(4) is Theorem \ref{ternary 7}).
By analyzing these phenomena we obtain some property on the joint distribution behavior  of $2$-Selmer groups among classes for the tiling number curves.
It follows from Theorem A that the two classes $\pm ny^2=x(x-1)(x+3)$, $n\equiv 3\mod 24$ have a trivial joint.
However, the case of $n\equiv 7\mod 24$ is different.

\s{\bf Theorem B} (Theorem \ref{positive density 7}).
{\em Among all positive square-free integers $n\equiv 7\mod 24$,
the subset of $n$ such that
$ny^2=x(x-1)(x+3)$ (resp. $-ny^2=x(x-1)(x+3)$)
have trivial $2$-Selmer modulo torsion
has limit density $\delta=0.288788\cdots$. Moreover, the subset of $n$ such that
both of $\pm ny^2=x(x-1)(x+3)$
have trivial $2$-Selmer modulo torsion
has limit density $\delta/2=0.144394\cdots$.}

\smallskip

Let $\Sigma=\{\infty,2,3\}$.
We call two square-free integers $n, n'$ $\Sigma$-equivalent if $n/n' \in (\BQ_v^\times)^2$ for all $v\in \Sigma$. This defines an equivalence relation among all square-free integers.
The sign of $E^{(n)}:ny^2=x(x-1)(x+3)$
only depends on the $\Sigma$-equivalence
class $[n]$ containing $n$ and we call it the sign of the class. There are $64$ classes and half of them have sign $+1$.  Among the $32$ classes of sign $+1$,
by $2$-descent in \S\ref{s:descent} and matrix analysis \S\ref{s:positive density},
one can similarly obtain
(see Remark \ref{comparison to CNP})
\begin{itemize}
\item
there are $27$ classes, for each the limit density of $2$-Selmer modulo torsion being trivial is $\delta$;
\item
there are $3$ classes, $[1], [33], [57]$, for each the limit density of $2$-Selmer modulo torsion being trivial is $\frac{4}{3}\delta$;
\item
there are $2$ classes, $[-1], [-13]$,  for each the limit density of $2$-Selmer modulo torsion is $0$.
\end{itemize}
\iffalse
The analysis on limit density of classes $[1], [33], [57]$ is more involved.
\fi
Hence one can obtain the following result for the limit density of $2$-Semler
being trivial for tiling number curves (noting that
each class have different relative size in the set of square-free integers).

\s{\bf Theorem C}. {\em Among all square-free integers $n$ with $\sign(E^{(n)})=+1$, the subset of $n$ such that $E^{(n)}$ has trivial $2$-Selmer modulo torsion has limit density
$$\frac{131}{144}\delta =0.262716\cdots.$$}

The Birch and Swinnerton-Dyer conjecture predicts that, for all $n$
in all the above classes,
the $2$-Selmer of $E^{(n)}$ modulo torsion is trivial
is equivalent to that the analytic Sha of $E^{(n)}$ is odd.
From Theorem \ref{analytic sha main}
it's expected that our method without large modification
should be able to prove this assertion for all these classes
(not only for $[\pm 3]$ and $[\pm 7]$),
except for the class $[1]$ where the analytic Sha of $E^{(n)}$
and $E^{(-n)}$ cannot be distinguished easily,
and classes $[2],[14],[26],[38]$ where the analytic Sha of $E^{(n)}$
cannot be expressed by only the genus numbers
$g(n):=\#2\Cl(\BQ(\sqrt{-n}))$.

\s{\bf Notations}.
Let $E:y^2=x^3+ax+b$ be a fixed elliptic curve over $\BQ$.
For $n\in\BQ^\times/(\BQ^\times)^2$, define the quadratic twist
$E^{(n)}$ to be $ny^2=x^3+ax+b$.

For an element $n$ of $\BQ^\times/(\BQ^\times)^2$,
let $S(n)$ denote
the $2$-Selmer group of $E^{(n)}$ modulo torsion:
$$
S(n):=\Sel_2(E^{(n)}/\BQ)/E^{(n)}(\BQ)_\tor,
$$
and $s(n)$ its dimension over $\BF_2$.
We also define
\begin{equation}
\label{analytic sha definition}
\CL(n):=\frac{L(E^{(n)},1)}{\Omega(E^{(n)})}\cdot \left(\frac{\prod_p c_p(E^{(n)})}{\#E^{(n)}(\BQ)_\tor^2}\right)^{-1},
\end{equation}
where $\Omega(E^{(n)})$ is
the period of $E^{(n)}$, and $c_p(E^{(n)})$ is the Tamagawa number of $E^{(n)}$
at $p$.
In other words, $\CL(n)=0$ if $L(E^{(n)},1)=0$,
and $\CL(n)$ is the analytic Sha predicted by the Birch and Swinnerton-Dyer conjecture
if $L(E^{(n)},1)\neq 0$.

Let $\fX$ be a subset of square-free integers,
and let $\fZ$ be a subset of $\fX$.
Then the limit density of $\fZ$ in $\fX$ is defined to be
$$
\limProb(\fZ\mid\fX)
:=\lim_{r\to\infty}\lim_{N\to\infty}
\BP\left(n\in\fZ~\middle|\begin{array}{l}
n\in\fX\text{ with} \\
\text{exactly }r\text{ prime factors and }|n|<N
\end{array}
\right).
$$

\section{Toric Periods and $L$-values}
\label{s:toric-periods}

First let's recall some notations and fact about toric periods and
Waldspurger formula in \cite{CST}.

Let $E$ be an elliptic curve over $\BQ$ of conductor $N$,
$\Sigma$ the places of $\BQ$ consisting of $2,\infty$ and bad places for $E$,
$\phi=\sum_{n=1}^\infty a_n q^n\in S_2(\Gamma_0(N))$ the newform associated to $E$.
Let $K$ be an imaginary quadratic field of discriminant $D$, $\Cl(K)$ be its ideal class group and $\chi:\Cl(K) \ra \{\pm 1\}$ be an unramified quadratic character.
Let $\eta$ be the quadratic character associated to the extension $K/\BQ$.
Let $\Sigma^-\subset\Sigma$ be the set of places such that
$$\epsilon_v(E\times\chi)\cdot \chi_v\eta_v(-1)=-1.$$
Here $\epsilon_v(E\times\chi)$ is the local root number at $v$ of the
Rankin-Selberg $L$-series
$L(E\times\chi,s)$.
Note that $\infty\in \Sigma^-$ and for any finite place $v\in \Sigma^-$, $v$ is not split in $K$.
Moreover, if $p$ is inert in $K$,
or if $p\mmid N$ is ramified in $K$, then there are simple criteria for $p\in\Sigma^-$:
\begin{itemize}
\item
if $p$ is inert in $K$, then
$p\in \Sigma^-$ if and only if $\ord_p(N)$ is odd;
\item
if $p\mmid N$ is ramified in $K$, then $p\in \Sigma^-$ if and only if $\chi([\fp])=a_p$, where $\fp$ is the unique prime of $K$ above $p$.
\end{itemize}

Assume that $\Sigma^-$ has even cardinality, equivalently, the sign of the functional equation of $L(E\times\chi,s)$ is $+1$. Let $B$ be the definite quaternion algebra over $\BQ$ ramified exactly at places in $\Sigma^-$. Note that
there exists an embedding of $K$ into $B$, which we fix once and for all, and then view $K$ as a $\BQ$-subalgebra of $B$.  Let $R\subset B$ be an order of discriminant $N$ with $R\cap K=\CO_K$. Such an order exists and is unique up to conjugation by $\wh{K}^\times$. Here, for an abelian group $M$, we define $\wh{M}=M\otimes_\BZ \wh{\BZ}$ where $\wh{\BZ}=\prod_p \BZ_p$ with $p$ running over all primes.
The Shimura set
$$X:=B^\times \bs \wh{B}^\times /\wh{R}^\times$$
is a finite set.
For each $[g]\in X$, let $w_g$
be the order of the finite group $(B^\times \cap g\wh{R}^\times g^{-1})/\{\pm 1\}$.
Let $\BZ[X]$ denote the set of $\BZ$-valued functions on $X$,
and define a bilinear pairing on $\BZ[X]$ by
$$
\pair{f_1, f_2}
:=\sum_{[g]\in X}w_g^{-1}f_1(g)f_2(g).
$$
The pairing is positive definite on $\BR[X]:=\BZ[X]\otimes_\BZ \BR$ and has a natural Hermitian extension to $\BC[X]:=\BZ[X]\otimes_\BZ \BC$.
Let $\BC[X]^0\subset\BC[X]$ be the orthogonal complement of
the functions on $X$ which factor through the reduced norm map
$\widehat B^\times\to\widehat\BQ^\times$.
Since the constant map $X\to\BC$, $[g]\mapsto 1$ factors
through the reduced norm map,
for any $f\in\BC[X]^0$ we have
$\deg(f):=\sum_{[g]\in X}w_g^{-1}f(g)=0$.

For each prime $p\nmid N$,
fix an isomorphism $R_p\cong M_2(\BZ_p)$,
define the
Hecke correspondence $T_p$ on $X$ by $T_p [g]=\sum_{h_p} [g^{(p)}h_p]$
if $g=(g_v)\in \wh{B}^\times$, where $g^{(p)}$ is the $p$-off part of $g$, and
$h_p$ runs over $\left\{g_p\left(\begin{smallmatrix}
p&i \\ &1
\end{smallmatrix}\right)\mid i\in\BZ/p\BZ\right\}\sqcup
\left\{g_p\left(\begin{smallmatrix}
1& \\ &p
\end{smallmatrix}\right)\right\}$.
By pullback of Hecke correspondence we define the Hecke action
$T_p$ on $\BZ[X]$.
Let $\BT\subset \End(\BZ[X])$ be the commutative algebra generated over $\BZ$ by all $T_p$ for $p\nmid N$. Then $\BZ[X]^0:=\BC[X]^0\cap\BZ[X]$ is stable under the action of $\BT$.  The Hecke operators $T_p$ are self-adjoint with respect to the Hermitian pairing, thus
$$
\BC[X]^0=\bigoplus_i \BC f_i
$$
where $\{f_i\}$ is an orthogonal basis
of $\BC[X]^0$
consists of eigenvectors for $\BT$.

If $v\mid(N, D)$, then $K_v^\times$ normalizes $R_v^\times$ and thus acts on $X$ by right multiplication. The subspace
$V=V(E,\chi)$ of $\BC[X]^0$ where $T_p$ acts as $a_p$ for all $p\nmid N$ and $K_v^\times$ acts via $\chi_v$ for all $v\mid(N, D)$,
called the space of test vectors \cite{GP} for $(E,\chi)$,
is one-dimensional. Moreover,
since the coefficients of $T_p$-actions and the values of $\chi_v$ are integral,
$V$ has a basis $f$ viewed as a function $f: X\ra \BZ$ and we may assume that $f$ is primitive in the sense the image of $f$ generates $\BZ$.
Such $f$ is unique up to $\pm 1$ and is called a primitive test vector
for $(E,\chi)$.
The inclusion $K\ra B$ induces a map $\iota: \Cl(K)\ra X$ using which we define an element in $\BC$, called the toric period associated to $(E, \chi)$:
$$P_\chi(f):=\sum_{t\in \Cl(K)} \chi(t) f(\iota(t)).$$
Then we have the Waldspurger formula
\begin{thm} \label{Wald} Let $E$ and $\chi$ be as above. Then
$$L(E\times\chi,1)=2^{-\mu(N, D)} \cdot \frac{8\pi^2 (\phi, \phi)_{\Gamma_0(N)}}{u^2 \sqrt{|D|}} \cdot
\frac{|P_\chi(f)|^2}{\pair{f, f}}.$$
Here $\mu(N, D)$ is the number of common prime factors of $N$ and $D$, $u=[\CO_K^\times: \BZ^\times]$, $(\phi, \phi)_{\Gamma_0(N)}$ is
the Petersson norm of $\phi$:
 $$(\phi, \phi)_{\Gamma_0(N)}=\int_{\Gamma_0(N)\bs \CH} |\phi(z)|^2 dx dy, \qquad z=x+iy.$$
\end{thm}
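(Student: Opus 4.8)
The plan is to obtain this explicit identity as a specialization of Waldspurger's central-value formula, transported to the definite quaternion algebra $B$ via the Jacquet--Langlands correspondence. First I would reformulate everything adelically: let $\pi$ be the cuspidal automorphic representation of $\GL_2(\BA)$ attached to $\phi$, and let $\pi_B$ be its Jacquet--Langlands transfer to $B^\times(\BA)$, whose space of automorphic forms is realized inside the functions on the Shimura set $X$. Under this identification the eigenline $V(E,\chi)\subset\BC[X]^0$ cut out by the conditions $T_pf=a_pf$ and $K_v^\times$ acting through $\chi_v$ is exactly the line of local test vectors of Gross--Prasad type, place by place, so that the primitive test vector $f$ is the natural global object to pair against. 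The value $L(E\times\chi,1)$ is then the central value (at $s=1/2$ in the analytic normalization) of the Rankin--Selberg $L$-function $L(s,\pi_K\otimes\chi)$, where $\pi_K$ is the base change of $\pi$ to $K$.

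Next I would invoke the factorization of the global toric period. Waldspurger's formula, in the refined form of Gross--Prasad and Yuan--Zhang--Zhang, expresses the normalized period $|P_\chi(f)|^2/\pair{f,f}$ as the central value $L(1/2,\pi_K\otimes\chi)$ divided by $L(1,\eta)^2$, times a product of purely local integrals $\prod_v I_v$ and an explicit archimedean and measure constant. The key computational input is that, with $f$ the primitive test vector, $I_v=1$ at every place unramified for the data, so the unramified places reassemble the full Euler product defining $L(E\times\chi,1)$; the surviving local integrals occur only at $v\in\Sigma$ and at $v\mid(N,D)$, and these I would evaluate directly using the explicit structure of the local order $R_v$ and the local character $\chi_v$.

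I would then assemble the constants. The Dirichlet class number formula, equivalently the residue of $L(s,\eta)$, converts the factor $L(1,\eta)^{-2}$ together with the measure normalization into the $\sqrt{|D|}$ and $u^2$ in the denominator, while the archimedean integral against the weight-$2$ form contributes the $8\pi^2$ and turns the adelic norm of $\pi_B$ into the classical Petersson norm $(\phi,\phi)_{\Gamma_0(N)}$; this last comparison is most cleanly seen through Shimizu's theta lift and the Rallis inner product formula. Matching the adelic inner product on $\pi_B$ with the combinatorial pairing $\pair{\cdot,\cdot}$ on $\BC[X]$, under which the weights $w_g^{-1}$ are precisely the local volume factors, then produces the stated shape of the formula.

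The main obstacle will be the precise evaluation of the local factors at the primes $p\mid(N,D)$ dividing both the conductor and the discriminant, since these are exactly the places producing the constant $2^{-\mu(N,D)}$: here $p$ is ramified in $K$, the group $K_p^\times$ normalizes $R_p^\times$, and one must compute the local toric integral for the Gross--Prasad test vector against $\chi_p$ while keeping careful track of local measures and of the index $u=[\CO_K^\times:\BZ^\times]$. A secondary but genuine difficulty is the normalization bookkeeping relating the classical Petersson norm $(\phi,\phi)_{\Gamma_0(N)}$ to the adelic norm of the newform, and thence to $\pair{f,f}$ through the Jacquet--Langlands isometry; getting the powers of $2$, of $\pi$, and of $\sqrt{|D|}$ simultaneously consistent is where the bulk of the care is required.
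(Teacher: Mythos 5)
The paper itself offers no proof of this theorem: it is recalled as a known fact from Cai--Shu--Tian \cite{CST}, where the explicit Waldspurger formula with precisely these normalizations is established. Your outline is, in substance, a summary of the strategy of that reference --- Jacquet--Langlands transfer of $\pi$ to the definite quaternion algebra $B$, identification of the eigenline $V(E,\chi)\subset\BC[X]^0$ with the Gross--Prasad line of local test vectors, Waldspurger's period identity expressing $|P_\chi(f)|^2/\pair{f,f}$ as the central value times a product of local integrals, and an assembly of constants via the class number formula, Shimizu's lift and the Petersson norm. So the route you propose is the correct one and is exactly the route the paper implicitly relies on by citation.

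As a proof attempt, however, it stops precisely where the content of the statement begins. Everything that distinguishes this explicit identity from the generic form of Waldspurger's theorem --- the factor $2^{-\mu(N,D)}$, the precise $u^2\sqrt{|D|}$ in the denominator, the identification of the adelic inner product on $\pi_B$ with the weighted counting pairing $\pair{f,f}=\sum_{[g]\in X}w_g^{-1}|f(g)|^2$, and the claim that the local integrals equal $1$ at all places unramified for the data when $f$ is the test vector for the order $R=\iota(\CO_K)+\iota(\fm)\CO_B$ with $R\cap K=\CO_K$ --- lives in the local computations that you flag as ``the main obstacle'' and do not carry out. In particular, the toric integrals at the primes $p\mid(N,D)$, where $p$ is ramified in $K$ and $K_p^\times$ normalizes $R_p^\times$, are exactly what produce the power of $2$, and their evaluation (as well as the measure and normalization bookkeeping relating $(\phi,\phi)_{\Gamma_0(N)}$ to the adelic norm) does not follow formally from the general framework; it is a genuine computation with the specific local orders. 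Without those evaluations the displayed formula is only reduced to a family of unresolved local problems, not proved. Either carry out the local integrals explicitly, or cite \cite{CST} for the statement, as the paper does.
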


Let $H_0$ be the genus field of $K$,
that is, the maximal unramified abelian extension of $K$
of exponent $2$.
Then $\Cl(K)/2\Cl(K)\cong\Gal(H_0/K)
\cong(\BZ/2\BZ)^{\mu(D)-1}$,
where $\mu(D)$ is the number of distinct prime factors of $D$.
Write $K=\BQ(\sqrt{-n})$ where $n$ is a positive square-free integer,
then any unramified quadratic extension over $K$
is a subfield of $H_0$,
and has form $K(\sqrt{-d})$ with $d\mid n$ positive; when $n\equiv 3\mod 4$ (resp.~$2\mid n$) it's moreover required that
$d\equiv 3\mod 4$
(resp.~$d\equiv 3\mod 4$ or $d\equiv n\mod 8$).
Denote by $\chi_d$ the corresponding unramified quadratic character
of $K$.
In particular, taking $d=n$, the $\chi_n=1_K$ is the trivial character over $K$.

For simplicity assume that $E[2]\subset E(\BQ)$.
Waldspurger formula allows us to relate the toric period $P_\chi(f)$ of an unramified
quadratic character $\chi=\chi_d$ with analytic Sha $\CL(n/d)$ and $\CL(-d)$
of elliptic curves occurring in the decomposition $L(E\times\chi,s)=L(E^{(n/d)},s)L(E^{(-d)},s)$.

\begin{prop}
\label{period0}
Let $\Sigma$ be the set of places of $\BQ$ containing $2,\infty$ and bad places of $E$.
Denote by $[n]$ and $[d]$ the $\Sigma$-equivalence classes containing $n$
and $d$, respectively.
Then for all $n$ and $d$
as above, the isomorphism class of $K_v$
for all $v\mid N$ only depends on $[n]$
and the following results hold:
\begin{itemize}
\item[(i)]
The root numbers of $E^{(n/d)}$ and $E^{(-d)}$
only depend
on $[n]$ and $[d]$.
\end{itemize}
Suppose $n$ and $d$ are such that
the root numbers of $E^{(n/d)}$ and $E^{(-d)}$ are both $+1$.
Then
\begin{itemize}
\item[(ii)]
The set $\Sigma^-\subset\Sigma$ such that
$\epsilon_v(E\times\chi)\cdot \chi_v\eta_v(-1)=-1$ only depends
on $[n]$ and $[d]$.
\item[(iii)]
Let $(n,d)$ and $(n',d')$ be such that $[n]=[n']$
and their associated $\Sigma^-$ equal.
Choose any embeddings
$\iota:K\to B$ and $\iota':K'\to B$.
Choose any order $R$ of $B$ of discriminant $N$ with
$R\cap\iota(K)=\iota(\CO_K)$.
Then one may find an order $R'$ of $B$ of discriminant $N$
with $R'\cap\iota'(K')=\iota'(\CO_{K'})$,
such that $R$ and $R'$ are locally conjugate to each other,
and the induced isomorphism $X\xrightarrow\sim X'$
preserves Hecke action and $K_v^\times$-action for all $v\mid(N,D)$,
here $D$ is the discriminant of $K$.
Therefore it preserves primitive test vectors
for $(E,\chi)$ and $(E,\chi')$
in the case that $\chi$ and $\chi'$ have the same local components at $v$
for all $v\mid(N,D)$,
in particular, primitive test vector only depends
on $[n]$ and $[d]$.
\item[(iv)]
Let $f$ be a primitive test vector for $(E,\chi)$
in the sense that $f\in V(E,\chi)$, $f:X\to\BZ$
whose image generates $\BZ$.
Then
$$
\left|\frac{P_\chi(f)}{2^{\mu(D)-1}}\right|^2=C_{n,d}\cdot
\CL(n/d)\CL(-d)\cdot T_{n,d},
$$
where $C_{n,d}$ is an explicit
non-zero rational constant which only depends
on $[n]$ and $[d]$,
$$
T_{n,d}:=[\CO_K^\times:\BZ^\times]^2\left(\frac{4}{\#E^{(n/d)}(\BQ)_\tor}\right)^2
\left(\frac{4}{\#E^{(-d)}(\BQ)_\tor}\right)^2.
$$
\end{itemize}
\end{prop}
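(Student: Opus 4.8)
The plan is to reduce items (i)--(iii) to local computations that visibly depend only on the $\Sigma$-equivalence classes $[n]$ and $[d]$, and to deduce (iv) from the Waldspurger formula of Theorem~\ref{Wald}. The preliminary assertion is immediate: the isomorphism class of $K_v=\BQ_v(\sqrt{-n})$ is governed by the image of $-n$ in $\BQ_v^\times/(\BQ_v^\times)^2$, and for $v\mid N$ (so $v\in\Sigma$) this image is fixed by $[n]$. For (i), I would expand the global root number of $E^{(m)}$, with $m=n/d$ or $m=-d$, as $-\prod_p\epsilon_p(E^{(m)})$. A good odd prime $p\nmid m$ contributes $+1$; a good odd prime $p\mid m$ forces additive ($I_0^*$) reduction with $\epsilon_p(E^{(m)})=\left(\frac{-1}{p}\right)$, and the product of these equals $\left(\frac{-1}{m_{\mathrm{good}}}\right)=(-1)^{(m_{\mathrm{good}}-1)/2}$, where $m_{\mathrm{good}}$ is the prime-to-$\Sigma$ part of $|m|$. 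Since $[n]$ and $[d]$ fix $n$ and $d$ modulo $8$, they fix $m_{\mathrm{good}}\bmod 4$, hence this sign; the factors at $v\in\Sigma$ depend only on the local class of $m$, i.e.\ on $[n],[d]$. For (ii) I would use the decomposition $\Ind_K^\BQ\chi_d=\eta_{n/d}\oplus\eta_{-d}$, giving $\epsilon_v(E\times\chi)=\epsilon_v(E^{(n/d)})\,\epsilon_v(E^{(-d)})$; for $v\in\Sigma$ both right-hand factors are controlled by the local analysis of (i), and the sign $\chi_v\eta_v(-1)$ depends only on the local components of $\chi_d$ and $\eta_{-n}$ at $v$. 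Hence membership of each $v\in\Sigma$ in $\Sigma^-$, and therefore $\Sigma^-$ itself, depends only on $[n],[d]$.

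Item (iii) is the technical core. By (ii) the set $\Sigma^-$ is the same for $(n,d)$ and for $(n',d')$, so $B=B'$ is one fixed definite quaternion algebra. I would construct, place by place, an element $g_v\in B_v^\times$ conjugating $R_v$ onto a local order that optimally contains $\iota'(\CO_{K'_v})$: at $v\nmid N$ both orders are maximal and hence conjugate, with $g_v=1$ for almost all $v$; at $v\mid N$, $v\nmid D$ one matches Eichler orders of the correct level via conjugacy of optimal embeddings; and at $v\mid(N,D)$ one invokes Skolem--Noether together with $K_v\cong K'_v$ to pick $g_v$ intertwining the $\iota(K_v^\times)$- and $\iota'((K'_v)^\times)$-actions. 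Assembling $g=(g_v)\in\wh B^\times$ produces an order $R'$ with $\wh{R'}=g\wh R g^{-1}$, so right translation by $g$ descends to a bijection $X\xrightarrow{\sim}X'$. Maximality at every $p\nmid N$ makes this map Hecke-equivariant, and the compatible choice at $v\mid(N,D)$ makes it equivariant for the $K_v^\times$-actions; it therefore carries $V(E,\chi)$ to $V(E,\chi')$, and when $\chi_v=\chi'_v$ for all $v\mid(N,D)$ it carries primitive test vectors to primitive test vectors, yielding the claimed independence. The main obstacle is arranging the conjugations at $v\mid(N,D)$ to be simultaneously compatible with the order, the optimal embedding, and the Hecke normalization, so that the single global $g$ preserves every structure at once.

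Finally, for (iv) I would substitute the factorization $L(E\times\chi,1)=L(E^{(n/d)},1)L(E^{(-d)},1)$ into Theorem~\ref{Wald} and replace each factor by $L(E^{(m)},1)=\CL(m)\,\Omega(E^{(m)})\prod_p c_p(E^{(m)})\big/\#E^{(m)}(\BQ)_\tor^2$. The transcendental bookkeeping uses the period relations $\Omega(E^{(n/d)})\sim\Omega^+(E)/\sqrt{n/d}$, $\Omega(E^{(-d)})\sim\Omega^-(E)/\sqrt d$, together with $(\phi,\phi)_{\Gamma_0(N)}\sim\Omega^+(E)\,\Omega^-(E)$ and $|D|\in\{n,4n\}$: the $\sqrt{n/d}\,\sqrt d=\sqrt n$ cancels against $\sqrt{|D|}$ and the Petersson norm, so the factor $8\pi^2(\phi,\phi)_{\Gamma_0(N)}/\sqrt{|D|}$ becomes $\Omega(E^{(n/d)})\Omega(E^{(-d)})$ times a nonzero rational. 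The delicate point is the power of $2$: because $E[2]\subset E(\BQ)$, every good odd $p\mid m$ has $c_p(E^{(m)})=4$, so $\prod_p c_p$ carries a factor $4^{\#\{\text{good }p\mid n\}}$, and one must check that its dependence on the number of prime factors of $n$ cancels against the $2^{-\mu(N,D)}$ of Waldspurger and the normalization $4^{-(\mu(D)-1)}$ built into the left-hand side. What remains is a rational constant $C_{n,d}$ assembled from bad-place Tamagawa numbers and balanced powers of $2$, all depending only on $[n],[d]$, while the torsion orders and $u=[\CO_K^\times:\BZ^\times]$ gather into $T_{n,d}$. I expect this power-of-$2$ cancellation, rather than the period computation, to be the real obstacle in (iv).
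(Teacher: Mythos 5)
Your proposal is correct and follows essentially the same route as the paper: (i)--(ii) via the local root-number analysis (the content of the paper's Lemma~\ref{quad twist arith values}), (iii) via place-by-place Skolem--Noether conjugation of the optimal order (the paper makes the ``same discriminant, still optimal'' check explicit by writing $R=\iota(\CO_K)+\iota(\fm)\CO_B$, but the argument is the same), and (iv) by substituting the factorization $L(E\times\chi,1)=L(E^{(n/d)},1)L(E^{(-d)},1)$ into Waldspurger and tracking periods and the $4^{\mu}$-powers from Tamagawa numbers against the $4^{\mu(D)-1}$ normalization. The two ``obstacles'' you flag are resolved exactly as you sketch them.
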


\begin{proof}
(i) This comes from Lemma \ref{quad twist arith values}.

(ii) For each $v\mid N$, we have
$\epsilon_v(E\times\chi)=\epsilon_v(E^{(n/d)})
\epsilon_v(E^{(-d)})$, and by Lemma \ref{quad twist arith values},
it only depends on $[n]$ and $[d]$.
The $\chi_v\eta_v(-1)$ also only depends on $[n]$ and $[d]$.

(iii) For each $v\mid N$ we fix an isomorphism
$K_v\cong K'_v$,
and denote $\iota_v:K_v\to B_v$
and $\iota_v':K_v\cong K'_v\to B_v$
the embeddings induced by $\iota$ and $\iota'$,
then there exists an element $\gamma_v\in B_v^\times$
such that $\iota_v'=\gamma_v\iota_v\gamma_v^{-1}$.
There exist maximal orders $\CO_B$
and $\CO_B'$
of $B$ such that $\CO_B\cap\iota(K)=\iota(\CO_K)$
and $\CO_B'\cap\iota'(K')=\iota'(\CO_{K'})$.
The $\CO_B$ and $\CO_B'$ are locally conjugate to each other,
say for each $v\nmid N$,
$\gamma_v\in B_v^\times$ is such that $\CO_{B,v}'=\gamma_v\CO_{B,v}\gamma_v^{-1}$;
note that $\CO_{B,v}'=\CO_{B,v}$ and $\gamma_v=1$ for all but finitely
many $v$.

One of the $R$ can be constructed as follows:
let $\fm\subset\CO_K$ be an integral ideal of norm
$N/\prod_{p\in\Sigma^-\setminus\{\infty\}}p$, then let $R=\iota(\CO_K)+\iota(\fm)\CO_B$.
Conversely, any such $R$ comes from this way, namely there exists a maximal
order $\CO_B$ of $B$ satisfying $\CO_B\cap\iota(K)=\iota(\CO_K)$,
and an integral ideal $\fm\subset\CO_K$, such that
$R=\iota(\CO_K)+\iota(\fm)\CO_B$.

For each $v\mid N$, let $\alpha_v\in\CO_{K,v}$ be an element
such that $\alpha_v\CO_{K,v}=\fm\CO_{K,v}$,
then $R_v=\iota_v(\CO_{K,v})+\iota_v(\alpha_v)\CO_{B,v}$,
and for each $v\nmid N$ we have $R_v=\CO_{B,v}$.
Now for each $v\mid N$ we construct
$R_v'$ by $R_v'=\gamma_vR_v\gamma_v^{-1}
=\iota_v'(\CO_{K,v})+\iota_v'(\alpha_v)\cdot\gamma_v\CO_{B,v}\gamma_v^{-1}$,
and construct $R'$ by $R'=B\cap(\prod_{v\mid N}R_v'\times\prod_{v\nmid N}\CO_{B,v}')$,
then $\widehat R'=\gamma\widehat R\gamma^{-1}$ with $\gamma=(\gamma_v)\in\widehat B^\times$
where $\gamma_v$ is described as before.
Hence $R'$ is locally conjugate to $R$,
is of discriminant $N$, and $R'\cap\iota'(K')=\iota'(\CO_{K'})$.
The isomorphism $X\xrightarrow\sim X'$ is given by $[g]\mapsto[g\gamma^{-1}]$,
it's easy to see that it preserves Hecke action and $K_v^\times$-action.

(iv) By Waldspurger formula (Theorem \ref{Wald}),
the definition of analytic Sha \eqref{analytic sha definition},
and the relation of Petersson inner product
and periods of elliptic curve \eqref{period of mod form and ec}, we have
\begin{multline*}
|P_\chi(f)|^2
=2^{\mu(N, D)}\langle f,f\rangle\cdot
\frac{c_E^2}{\deg\varphi}\cdot\frac{u^2\sqrt{D}}{\Omega(E)\Omega^-(E)}
\cdot
\Omega(E^{(n/d)})\Omega(E^{(-d)}) \\
{}\cdot
\frac{\prod_p c_p(E^{(n/d)})}{\#E^{(n/d)}(\BQ)_\tor^2}\cdot
\frac{\prod_p c_p(E^{(-d)})}{\#E^{(-d)}(\BQ)_\tor^2}\cdot
\CL(n/d)\CL(-d),
\end{multline*}
here $\varphi:X_0(N)\to E$ is a modular parametrization of $E$
of minimal degree.
The relation of periods of elliptic curves under quadratic twist
is given by Lemma \ref{quad twist arith values}, which induces
$$
\frac{u^2\sqrt{D}}{\Omega(E)\Omega^-(E)}\cdot
\Omega(E^{(n/d)})\Omega(E^{(-d)})
=c_\infty(E)u^2\sqrt{-D/n}\cdot
u^{(n/d)}u^{(-d)},
$$
here $u^{(n/d)}$ and $u^{(-d)}$ compares the minimal Weierstrass equations
under quadratic twist.
Therefore the constant $C_{n,d}$ equals
\begin{multline}
\label{e:Cnd}
C_{n,d}=4^{\mu(n)-\mu(D)-3}
2^{\mu(N,D)}\langle f,f\rangle\cdot\frac{c_E^2c_\infty(E)}{\deg\varphi}
\cdot\sqrt{-D/n}\cdot u^{(n/d)}u^{(-d)} \\
\cdot
4^{-\mu(n/d)}\prod_p c_p(E^{(n/d)})\cdot
4^{-\mu(d)}\prod_p c_p(E^{(-d)}).
\end{multline}
Note that $\mu(N,D)$ only depends on $[n]$,
$\langle f,f\rangle$ only depends on $[n]$ and $[d]$,
$\mu(n)-\mu(D)\in\{0,-1\}$ and
$\sqrt{-D/n}\in\{1,2\}$ only depend on $n\mod 4$ (hence $[n]$),
and by Lemma \ref{quad twist arith values},
$u^{(n/d)}$ and $4^{-\mu(n/d)}\prod_p c_p(E^{(n/d)})$
(resp.~$u^{(-d)}$ and $4^{-\mu(d)}\prod_p c_p(E^{(-d)})$)
only depends on $[n/d]$
(resp.~$[d]$).
So the desired result follows.
\end{proof}

\begin{remark}
For all but finitely many square-free integer $n$,
$E^{(n)}(\BQ)_\tor=E^{(n)}(\BQ)[2]$,
so $T_{n,d}=1$ in most cases.
\end{remark}

\subsection{Test functions for Waldspurger formula}

We apply Proposition \ref{period0} and \eqref{e:Cnd}
to the elliptic curve
$$
E: y^2=x(x-1)(x+3),
$$
as well as its quadratic twist $E^{(-1)}$.
The conductor of $E$ and $E^{(-1)}$ are $24$ and $48$, respectively,
so in both cases $\Sigma=\{\infty,2,3\}$.
All of $E^{(\pm 1)}$ and $E^{(\pm 3)}$ have root numbers $+1$
and $\CL(\pm 1)=\CL(\pm 3)=1$.

We introduce the following notation:
a pair $(n,d)$ is called \emph{admissible} for $E$,
if the root numbers of $E$,
$E^{(-n)}$, $E^{(n/d)}$ and $E^{(-d)}$ are all $+1$,
and such that $(E,\chi_d)$ and $(E,1_K)$ have same primitive test vectors.

\begin{thm}
\label{period1}
Let $E:y^2=x(x-1)(x+3)$ and $n\neq 1,3$ be a positive square-free integer.

{\rm(i)}
If $(n,d)$ is admissible for $E$, then
the primitive test vector $f$ for $(E,\chi_d)$
only takes values in odd integers, and
$$
\left|\frac{P_\chi(f)}{2^{\mu(D)-1}}\right|^2
=C_{n,d}\cdot\CL(n/d)\CL(-d)\cdot T_{n,d}
$$
where
$$
C_{n,d}=\begin{cases}
1,&\text{if }n\equiv 1\mod 12\text{ or }(n,d)\equiv(7,11)\mod 24, \\
4,&\text{otherwise},
\end{cases}
\quad\text{and}\quad
T_{n,d}=\begin{cases}
1/4, &\text{if }d=n, \\
1,&\text{otherwise}.
\end{cases}
$$

{\rm(ii)}
If $(n,d)$ is admissible for $E^{(-1)}$, then
except for $n\equiv 2\mod 12$ case,
the primitive test vector $f$ for $(E^{(-1)},\chi_d)$
only takes values in odd integers, and
$$
\left|\frac{P_\chi(f)}{2^{\mu(D)-1}}\right|^2
=C_{n,d}\cdot\CL(-n/d)\CL(d)\cdot T_{n,d}
$$
where
$$
C_{n,d}=1
\quad\text{and}\quad
T_{n,d}=\begin{cases}
1/4, &\text{if }d=1, \\
1,&\text{otherwise}.
\end{cases}
$$
\end{thm}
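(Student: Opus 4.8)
The plan is to deduce everything from Proposition \ref{period0}(iv), which already supplies the identity
$$
\left|\frac{P_\chi(f)}{2^{\mu(D)-1}}\right|^2 = C_{n,d}\,\CL(n/d)\CL(-d)\,T_{n,d}
$$
(and its analogue for $E^{(-1)}$), with $C_{n,d}$ the explicit rational constant \eqref{e:Cnd} depending only on the $\Sigma$-equivalence classes $[n]$ and $[d]$. Thus the theorem reduces entirely to (a) evaluating $C_{n,d}$, (b) pinning down $T_{n,d}$, and (c) proving that the primitive test vector is everywhere odd. Since $C_{n,d}$ and $T_{n,d}$ depend only on $[n]$ and $[d]$, and since for $E$ of conductor $24$ one has $\Sigma=\{\infty,2,3\}$ so that only the two definite quaternion algebras ramified at $\{\infty,2\}$ and $\{\infty,3\}$ occur, I would first enumerate the finitely many admissible class-pairs $([n],[d])$ and then treat each by choosing a convenient representative.

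For (a) I would evaluate the six groups of factors in \eqref{e:Cnd} separately. The quantities attached to $E$ alone — $c_E$, $c_\infty(E)$ and $\deg\varphi$ — are computed once from the minimal model $y^2=x(x-1)(x+3)$ (resp.\ $-y^2=x(x-1)(x+3)$). The global combinatorial factors $\mu(N,D)$, $\mu(n)-\mu(D)\in\{0,-1\}$ and $\sqrt{-D/n}\in\{1,2\}$ depend only on $n\bmod 8$ and on which primes in $\Sigma$ ramify in $K$, hence only on $[n]$. The local comparison factors $u^{(n/d)},u^{(-d)}$ together with the normalized Tamagawa products $4^{-\mu(n/d)}\prod_p c_p(E^{(n/d)})$ and $4^{-\mu(d)}\prod_p c_p(E^{(-d)})$ are read off from Lemma \ref{quad twist arith values}; these contribute only at $p\in\{2,3\}$ and at the ramified primes, and again depend only on the classes. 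The remaining — and genuinely hard — factor is $\langle f,f\rangle$. The hardest part will be computing this self-pairing of the primitive test vector: it requires realizing the Shimura set $X=B^\times\backslash\widehat B^\times/\widehat R^\times$ for the relevant $B$ with its order $R$ of discriminant $N$, listing the weights $w_g$, diagonalizing the Hecke action to locate the eigenline $V(E,\chi)$ with $T_p$-eigenvalue $a_p$ and prescribed $K_v^\times$-action, and normalizing the eigenvector to be primitive. To avoid an independent determination of this constant I would calibrate against the base pairs for which $n/d,-d\in\{\pm 1,\pm 3\}$: there $\CL(n/d)\CL(-d)=1$ by the known values $\CL(\pm1)=\CL(\pm3)=1$, the class number of $K$ is small so that $P_\chi(f)$ is a very short sum, and the Waldspurger identity of Theorem \ref{Wald} then forces the value of $\langle f,f\rangle$, hence of $C_{n,d}$, on the whole class.

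For (c), the oddness of $f$ on all of $X$ I would derive from the structure of $X$ and of $f$ modulo $2$. Since $E[2]\subset E(\BQ)$, the mod-$2$ Hecke eigensystem of $f$ is Eisenstein ($a_p\equiv p+1\bmod 2$), and I would show that the corresponding reduction $f\bmod 2$ is a nowhere-vanishing class function on the small set $X$; combined with primitivity (the image of $f$ generates $\BZ$) this forces every value of $f$ to be odd. This is exactly what makes $P_\chi(f)$ a sum of $\#\Cl(K)$ odd integers, which is what the parity analysis of the later sections will exploit.

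Finally, for (b), $T_{n,d}=[\CO_K^\times:\BZ^\times]^2\,(4/\#E^{(n/d)}(\BQ)_\tor)^2\,(4/\#E^{(-d)}(\BQ)_\tor)^2$ is evaluated from the torsion. For $n\neq 1,3$ one has $[\CO_K^\times:\BZ^\times]=1$, and generically $E^{(m)}(\BQ)_\tor=E^{(m)}[2]$ has order $4$, giving $T_{n,d}=1$; the only exception in (i) is $d=n$, where $E^{(n/d)}=E$ and the rational point $(3,6)$ of order $4$ (with $2(3,6)=(1,0)$) shows $\#E(\BQ)_\tor=8$, whence $T_{n,n}=1/4$, and symmetrically $d=1$ in (ii). Assembling these computations class by class then yields $C_{n,d}\in\{1,4\}$ with the stated congruence dichotomy in (i) and $C_{n,d}=1$ in (ii). The case $n\equiv 2\bmod 12$ is excluded in (ii) precisely because there the test vector for $(E^{(-1)},\chi_d)$ fails to be everywhere odd, so the clean formula breaks down and must be handled separately.
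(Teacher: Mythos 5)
The paper states Theorem \ref{period1} with no proof at all (it is followed directly by a remark), so there is nothing to compare against line by line; judged on its own terms, your reduction to Proposition \ref{period0}(iv) and formula \eqref{e:Cnd} is certainly the intended route, and your evaluation of $T_{n,d}$ is correct and complete ($[\CO_K^\times:\BZ^\times]=1$ for $n\neq 1,3$, and by Lemma \ref{torsion} the torsion has order $8$ exactly for the trivial twist, which is the factor $d=n$ in (i) and $d=1$ in (ii)). The problem is that the two remaining tasks --- the oddness of $f$ and the value of $C_{n,d}$ --- are the entire content of the theorem, and neither is actually carried out.

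The genuine gap is the oddness claim. Your argument is: $f\bmod 2$ has Eisenstein eigensystem, ``show that the reduction is nowhere-vanishing,'' conclude by primitivity. The middle step \emph{is} the statement to be proved, so as written the argument is circular; and the only soft way to fill it --- $f\bmod 2$ lies in the subspace of $\BF_2[X]$ where every $T_p$ acts by $p+1$, that subspace is spanned by the constant function, and primitivity then forces $f\equiv 1$ --- cannot be right in general, because the conclusion itself fails in the case $n\equiv 2\bmod{12}$ of part (ii), which no such structural argument would distinguish. Concretely, when $\BC[X]^0$ is one-dimensional the degree-zero condition plus primitivity force $f=\pm(w_{g_1},-w_{g_2})/\gcd(w_{g_1},w_{g_2})$ on a two-element Shimura set, so the parity of $f$ is governed by the unit weights $w_g$ of the order $R$: oddness is a statement about the explicit arithmetic of $R$ that must be verified separately for each $\Sigma$-equivalence class, and is precisely what separates the excluded class from the others. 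Nothing in your proposal computes $X$, the $w_g$, or $f$. The same deferral affects $C_{n,d}$: your calibration against $\CL(\pm1)=\CL(\pm3)=1$ needs a representative with $n/d\in\{1,3\}$ and $d\in\{1,3\}$, hence $n\in\{1,3\}$, so it reaches essentially no admissible class pair (e.g.\ $([7],[11])$ has $n/d\equiv 5$ and $-d\equiv 13\pmod{24}$). A more useful observation is that admissibility forces $f$ to equal the test vector for $(E,1_K)$, so $\langle f,f\rangle$ depends on $[n]$ alone and one Brandt-matrix computation per class suffices; but either way that finite explicit computation for the orders of discriminant $24$ and $48$ in the quaternion algebras ramified at $\{2,\infty\}$ and $\{3,\infty\}$ is where the stated dichotomy $C_{n,d}\in\{1,4\}$ comes from, and it is missing.
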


\begin{remark}
In fact, if
$n\neq 1,3$ is a positive square-free integer, then
$(n,d)$ is admissible for $E$ if and only if
$$
\begin{cases}
d\equiv n,7\mod 24&\text{and }n\equiv 1\mod 12,\text{ or}\\
d\equiv n\mod 48\text{ or }d\equiv 7\mod 24&\text{and }n\equiv 2\mod 12,\text{ or}\\
d\equiv n\mod 72\text{ or }d\equiv 11\mod 24&\text{and }n\equiv 3\mod 24,\text{ or}\\
d\equiv n\mod 144\text{ or }d\equiv n-16\mod 48&\text{and }n\equiv 6\mod 12,\text{ or}\\
d\equiv 7,11\mod 24&\text{and }n\equiv 7,11\mod 24,
\end{cases}
$$
$(n,d)$ is admissible for $E^{(-1)}$ if and only if
$$
\begin{cases}
d\equiv n,7\mod 24&\text{and }n\equiv 1,5\mod 24,\text{ or}\\
d\equiv n\mod 48\text{ or }d\equiv 7\mod 24&\text{and }n\equiv 2\mod 12,\text{ or}\\
d\equiv n\mod 36&\text{and }n\equiv 3\mod 12,\text{ or}\\
d\equiv 7\mod 12&\text{and }n\equiv 7\mod 12,\text{ or}\\
d\equiv n,n-18\mod 72&\text{and }n\equiv 9\mod 24.
\end{cases}
$$
\end{remark}

\subsection{Induction method}

The following result relates the $\CL(\pm n)$
with $\CL(\pm d)$ where $d$ is a divisor of $n$,
whose number of prime divisors is smaller than that of $n$.

\begin{prop}
\label{analytic sha ind 1}
Let $E:y^2=x(x-1)(x+3)$ and $n\neq 1,3$ be a positive square-free integer.

{\rm(i)}
For $n$ such that $E^{(-n)}$ has sign $+1$, $\CL(-n)$ is a square of an integer;
moreover, for $n\equiv 1\mod 12$, $\CL(-n)$ is even.
We have the following induction equalities for $\CL(-n)$ in $\BF_2$:
\begin{align*}
\CL(-n)&=g(n)
\qquad\text{for}\qquad n\equiv 2,6\mod 12\text{ or }n\equiv 3,11\mod 24, \\
\CL(-n)+\sum_{\substack{d\neq 1,n\\
d\mid n\\
d\equiv 11\mod 24}}\CL(n/d)\CL(-d)&=g(n)
\qquad\text{for}\qquad n\equiv 7\mod 24.
\end{align*}
Here $g(n):=\#2\Cl(\BQ(\sqrt{-n}))$.

{\rm(ii)}
For $n$ such that $E^{(n)}$ has sign $+1$, $\CL(n)$ is a square of an integer.
We have the following induction equalities for $\CL(n)$ in $\BF_2$:
\begin{align*}
\CL(n)+\sum_{\substack{d\neq 1,n\\
d\mid n\\
d\equiv 7\mod 24}}\CL(-n/d)\CL(d)&=P_0(f)
\qquad\text{for}\qquad n\equiv 2\mod 12, \\
\CL(n)&=g(n)
\qquad\text{for}\qquad n\equiv 3,7\mod 12, \\
\CL(n)+\sum_{\substack{d\neq 1,n\\
d\mid n\\
d\equiv 7\mod 24}}\CL(-n/d)\CL(d)&=g(n)
\qquad\text{for}\qquad n\equiv 5\mod 24, \\
\CL(n)+\sum_{\substack{d\neq 1,n\\
d\mid n\\
d\equiv n-18\mod 72}}\CL(-n/d)\CL(d)&=g(n)
\qquad\text{for}\qquad n\equiv 9\mod 24.
\end{align*}
Here $P_0(f)$ is the genus period associated to
$(E^{(-1)},1_{\BQ(\sqrt{-n})})$ in $n\equiv 2\mod 12$ case.
\end{prop}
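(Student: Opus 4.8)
The plan is to read off each product $\CL(n/d)\CL(-d)$ from a toric period via Theorem~\ref{period1} and then to analyze these periods by finite Fourier analysis on the genus group. Fix $K=\BQ(\sqrt{-n})$ and set $G:=\Cl(K)/2\Cl(K)\cong(\BZ/2\BZ)^{\mu(D)-1}$. Every unramified quadratic character $\chi_d$ is trivial on $\Cl(K)^2=2\Cl(K)$, hence factors through $G$, so grouping the defining sum of $P_{\chi_d}(f)$ by cosets of $2\Cl(K)$ gives $P_{\chi_d}(f)=\sum_{\bar t\in G}\chi_d(\bar t)F(\bar t)$ with $F(\bar t):=\sum_{t\mapsto\bar t}f(\iota(t))\in\BZ$. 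Thus, for a single fixed test vector $f$ (the one attached to $1_K$), the integers $P_{\chi_d}(f)$ are the finite Fourier coefficients of the one function $F$ on $G$, and all of the $L$-value products are governed by $F$. I would also use that $P_{\chi_d}(f)=0$ whenever the local components of $\chi_d$ at $v\mid(N,D)$ differ from those of $1_K$, since then $f$ is orthogonal to $V(E,\chi_d)$; this is exactly the admissibility condition, so only admissible $d$ contribute.

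The genus number enters through the coset sizes. By Theorem~\ref{period1} the fixed $f$ takes only odd values, so each $F(\bar t)$ is a sum of exactly $g(n)=\#2\Cl(K)$ odd integers and therefore $F(\bar t)\equiv g(n)\pmod 2$; in particular $F(\bar 0)=\sum_{t\in 2\Cl(K)}f(\iota(t))\equiv g(n)\pmod 2$. The key identity is the orthogonality relation at the identity coset, $\sum_{d}P_{\chi_d}(f)=\#G\cdot F(\bar 0)=2^{\mu(D)-1}F(\bar 0)$, the sum running over admissible $d$. Combined with the square formula $P_{\chi_d}(f)^2=C_{n,d}T_{n,d}\,4^{\mu(D)-1}\CL(n/d)\CL(-d)$ of Theorem~\ref{period1}, which also shows each $\CL$-value to be a perfect square (via positivity of the pairing and integrality of $f$), this produces a single master congruence of the shape $\sum_{d}\CL(n/d)\CL(-d)\equiv g(n)\pmod 2$.

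From the master congruence the individual induction equalities fall out by isolating the $d=n$ term $\CL(1)\CL(-n)=\CL(-n)$ and reading off which remaining admissible divisors survive modulo $2$. Here the explicit constants are decisive: a term with $C_{n,d}=4$ forces $P_{\chi_d}(f)/2^{\mu(D)-1}$ to be even and hence drops out mod~$2$, while the congruence conditions recorded in the Remark after Theorem~\ref{period1} determine the admissible $d$ precisely. For $n\equiv 3,11\mod 24$ no intermediate admissible divisor contributes a nonzero product, so the congruence collapses to $\CL(-n)\equiv g(n)$, i.e.\ $\CL(-n)=g(n)$ in $\BF_2$; for $n\equiv 7\mod 24$ the surviving divisors are those with $d\equiv 11\mod 24$, and transferring their contributions yields the stated relation $\CL(-n)+\sum_{d\equiv 11\mod 24}\CL(n/d)\CL(-d)=g(n)$. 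Part~(ii) is obtained by running the same machine for $E^{(-1)}$ in place of $E$, the class $n\equiv 2\mod 12$ being the one case where the principal-genus sum $F(\bar 0)$ cannot be reduced to a genus number and is therefore recorded as the genus period $P_0(f)$.

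I expect the genuine difficulty to be the exact $2$-adic valuation bookkeeping rather than the parity count. The normalizing factor $2^{\mu(D)-1}$ interacts delicately with the anomalous value $T_{n,n}=1/4$ at $d=n$: there $P_{1_K}(f)/2^{\mu(D)-1}$ is only a half-integer, so reducing the orthogonality relation term by term is not legitimate, and one must instead track $\ord_2(P_{1_K}(f))$ against $\mu(D)-1$ directly, showing it hits the critical value exactly when $g(n)=1$. This forces an induction on the number of prime factors $\mu(n)$, feeding the already-known parities of the proper-divisor products $\CL(n/d)\CL(-d)$ back into the coset structure of $2\Cl(K)$. Because the curves are non-CM, the rigidity that makes these valuations transparent in the congruent-number setting is unavailable, so controlling this valuation is the new point; the Hecke self-adjointness of the pairing and the integrality of $f$ are the tools I would lean on to rule out unexpected cancellation.
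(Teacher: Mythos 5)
Your strategy is, in substance, the paper's own: your ``orthogonality relation at the identity coset'' is precisely the genus-period identity \eqref{e:genus-period-1} (your $F(\bar 0)$ is the paper's $P_0(f)$ and $\#G=2^{\mu(D)-1}$), the parity $F(\bar 0)\equiv g(n)$ comes from the oddness of the primitive test vector, the constants $C_{n,d}$ and $T_{n,d}$ of Theorem~\ref{period1} decide which divisors survive modulo $2$, and the whole thing is closed by induction on the number of prime factors. For part (i) in the classes $n\equiv 3,11,7\bmod 24$ your bookkeeping is correct: $C_{n,n}T_{n,n}=1$ there, so the $d=n$ term is an honest integer $\pm\sqrt{\CL(-n)}$ (your ``half-integer'' worry at $d=n$ does not arise outside $n\equiv 1\bmod 12$), and the surviving intermediate divisors are exactly those with $C_{n,d}=1$.

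The genuine gap is the claim that $\CL(-n)$ is even for $n\equiv 1\bmod 12$. This is part of the statement, and your single master congruence cannot produce it when $n\equiv 1\bmod 24$: there both $d=n$ and $d=1$ are admissible, contributing $\pm\sqrt{\CL(-n)}/2$ and $\pm\sqrt{\CL(n)}$ to the same identity, and the analogous identity for $E^{(-1)}$ contains the same two unknowns with undetermined signs, so two relations of the shape $\pm\sqrt{\CL(-n)}/2\pm\sqrt{\CL(n)}\in\BZ$ never let you conclude $\sqrt{\CL(-n)}/2\in\BZ$. The paper resolves this with a \emph{second}, finer identity \eqref{e:genus-period-2}: summing only over the characters that factor through $\Gal(H_0'/K)$ for the index-two subfield $H_0'\subset H_0$ with $H_0=H_0'(\sqrt{-1})$ removes the $\chi_1$ term while keeping $1_K$, and its right-hand side $P_0'(f)/2$ is an integer, giving $\sqrt{\CL(-n)}/2\in\BZ$ directly. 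This evenness is not peripheral: in part (ii), for $n\equiv 3,7\bmod 12$ every admissible $d\neq 1,n$ has $n/d\equiv 1\bmod 12$ (and $C_{n,d}=1$ for $E^{(-1)}$), so it is exactly the divisibility $4\mid\CL(-n/d)$ that makes those intermediate terms vanish modulo $2$ and collapses the congruence to $\CL(n)=g(n)$. Without the second genus-period identity your induction does not close; your proposed substitute (tracking $\ord_2(P_{1_K}(f))$ ``directly'') does not identify a mechanism that would supply the missing relation.
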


\begin{proof}
First we come
back to the settings in Proposition \ref{period0}.
Assume the root number of $E$ is $+1$,
and $n$ is a positive square-free integer such that the root number of
$E^{(-n)}$ is also $+1$. Let $f$ be a primitive test vector for $(E,1_K)$.
Consider all unramified quadratic characters $\chi=\chi_d$ of $K$.
If $d$ is such that both of $E^{(n/d)}$ and $E^{(-d)}$
have root number $+1$, and such that $\chi_{d,v}=1_{K_v}$
for all $v\mid(N,D)$, then $f$ is also a primitive test vector for $(E,\chi)$
and $P_\chi(f)$ is related to $\CL(n/d)\CL(-d)$ by Proposition \ref{period0}.
If $d$ does not satisfy these conditions,
then $P_\chi(f)=0$.

Sum up all these $\chi_d$ we obtain
\begin{equation}
\label{e:genus-period-1}
\sum_{\substack{\chi_d:\Gal(H_K/K)\to\{\pm 1\}\\
f\in V(E,\chi_d)}}\frac{P_{\chi_d}(f)}{2^{\mu(D)-1}}
=P_0(f),
\end{equation}
where $P_0(f)$ is the genus period:
$$
P_0(f):=\sum_{t\in\Gal(H_K/H_0)}f(\iota(t))\in\BZ.
$$
If $f$ only takes values in odd integers, then $P_0(f)\equiv g(n)\mod 2$.

Similarly, let $H_0'$ be a subfield of $H_0$ with $[H_0:H_0']=2$ such that $H_0=H_0'(\sqrt{-1})$.
Such choice of $H_0'$ makes $\chi_1:\Gal(H_K/K)\to\{\pm 1\}$ \emph{does not}
factor through $\Gal(H_0'/K)$, and we have
\begin{equation}
\label{e:genus-period-2}
\sum_{\substack{\chi_d:\Gal(H_0'/K)\to\{\pm 1\}\\
f\in V(E,\chi_d)}}\frac{P_{\chi_d}(f)}{2^{\mu(D)-1}}
=\frac{P_0'(f)}{2},
\end{equation}
where
$$
P_0'(f):=\sum_{t\in\Gal(H_K/H_0')}f(\iota(t))\in\BZ.
$$
Since $[H_K:H_0']$ is even, we obtain that if $f$ only takes values
in odd integers (or even integers),
then $P_0'(f)\equiv 0\mod 2$ and the right hand side of \eqref{e:genus-period-2}
is an integer.

Apply the above discussions to $E:y^2=x(x-1)(x+3)$ as well as its quadratic twist
$E^{(-1)}$. By \eqref{e:genus-period-1}
(\eqref{e:genus-period-2} for $n\equiv 1\mod 24$) and Theorem \ref{period1},
induction on the number of prime factors of $n$,
it's easy to obtain the integrality and $2$-divisibility of $\CL(\pm n)$.
Once we know the integrality, the induction equalities come easily from
\eqref{e:genus-period-1} and Theorem \ref{period1}.
\end{proof}

The above induction equalities in
Proposition \ref{analytic sha ind 1} allow us to express $\CL(\pm n)$
in terms of $g(n)$.

\begin{thm}
\label{analytic sha main}
Let $E:y^2=x(x-1)(x+3)$
and $n\neq 1,3$ be a positive square-free integer.
Then we have the following equalities for $\CL(-n)$ in $\BF_2$:
\begin{align*}
\CL(-n)&=g(n) &
\text{for}\quad n&\equiv 2,6\mod 12\text{ or } n\equiv 3,11\mod 24, \\
\CL(-n)&=g(n)+\sum_{\substack{d\neq 1,n\\
d\mid n\\
d\equiv 11\mod 24}}g(n/d)g(d) &
\text{for}\quad n&\equiv 7\mod 24,
\end{align*}
and the following equalities for $\CL(n)$ in $\BF_2$:
\begin{align*}
\CL(n)&=P_0(f)+\sum_{\substack{d\neq 1,n\\
d\mid n\\
d\equiv 7\mod 24}}g(n/d)g(d) &
\text{for}\quad n&\equiv 2\mod 12, \\
\CL(n)&=g(n) &
\text{for}\quad n&\equiv 3,7\mod 12, \\
\CL(n)&=g(n)+\sum_{\substack{d\neq 1,n\\
d\mid n\\
d\equiv 7\mod 24}}g(n/d)g(d) &
\text{for}\quad n&\equiv 5\mod 24, \\
\CL(n)&=g(n)+\sum_{\substack{d\neq 1,n\\
d\mid n\\
d\equiv n-18\mod 72}}\left(g(n/d)+\sum_{\substack{d'\neq 1,n/d\\
d'\mid n/d\\
d'\equiv 11\mod 24}}g(n/dd')g(d')\right)g(d) &
\text{for}\quad n&\equiv 9\mod 24.
\end{align*}
Here $g(n):=\#2\Cl(\BQ(\sqrt{-n}))$
and $P_0(f)$ is the genus period associated to
$(E^{(-1)},1_{\BQ(\sqrt{-n})})$ in $n\equiv 2\mod 12$ case.
\end{thm}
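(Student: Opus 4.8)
The plan is to argue by a single strong induction on $\mu(n)$, the number of prime factors of $n$, carried out simultaneously over all the residue classes and both signs appearing in the statement; this is necessary because the recursive identity of Proposition \ref{analytic sha ind 1} for one class calls on the closed form for another class (and the opposite sign) at a strictly smaller argument. The classes $n\equiv 2,6\mod 12$ or $n\equiv 3,11\mod 24$ for $\CL(-n)$, and $n\equiv 3,7\mod 12$ for $\CL(n)$, are literally the non-recursive equalities of Proposition \ref{analytic sha ind 1}, so there is nothing to prove. For each remaining class the relevant identity there expresses $\CL(\pm n)$, modulo the trivial term ($g(n)$ or $P_0(f)$), as a sum of products $\CL(\pm n/d)\CL(\mp d)$ over proper divisors $d$ in a fixed residue class; since $d\neq 1,n$ both factors have fewer prime divisors than $n$, so I may substitute the inductive hypothesis for each of them.

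The engine of the substitution is elementary residue bookkeeping. When $n\equiv 5\mod 24$ the divisors satisfy $d\equiv 7\mod 24$, whence $n/d\equiv 11\mod 24$; the base formulas give $\CL(d)=g(d)$ and $\CL(-n/d)=g(n/d)$, so every summand is already $g(n/d)g(d)$ and the asserted identity falls out with no correction. The case $n\equiv 2\mod 12$ is the same, using $d\equiv 7\mod 24\Rightarrow n/d\equiv 2\mod 12$ and the base value $\CL(-n/d)=g(n/d)$. For $n\equiv 9\mod 24$ one checks that $d\equiv n-18\mod 72$ forces $d\equiv 15\mod 24$ and, crucially, $n/d\equiv 7\mod 24$: the mod $8$ part gives $n/d\equiv 7$, while the mod $9$ condition $d\equiv n\mod 9$ together with $3\mmid n$ forces $n/d\equiv 1\mod 3$. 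Then $\CL(d)=g(d)$ by a base formula, whereas $\CL(-n/d)$ is supplied by the already-established $n\equiv 7\mod 24$ formula; substituting the latter verbatim reproduces exactly the nested sum in the statement. Thus these three classes need no cancellation beyond transporting the inductive formula.

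The one genuinely delicate point, and the main obstacle, is $n\equiv 7\mod 24$ for $\CL(-n)$. Here the divisors satisfy $d\equiv 11\mod 24$, so $\CL(-d)=g(d)$, but $n/d\equiv 5\mod 24$, which forces me to insert the \emph{nontrivial} $\equiv 5\mod 24$ formula $\CL(n/d)=g(n/d)+\sum_{d''\equiv 7}g(n/(dd''))g(d'')$. Beyond the desired single sum $\sum_{d\equiv 11}g(n/d)g(d)$ this generates an unwanted double sum $\sum_{d\equiv 11}\sum_{d''\equiv 7}g(n/(dd''))\,g(d)\,g(d'')$, and the whole claim in this class rests on showing it vanishes in $\BF_2$. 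The plan is to reindex it by ordered factorizations $n=abc$ with $a=d$, $b=d''$, $c=n/(dd'')$: a short computation gives $a\equiv 11$, $b\equiv 7$, $c\equiv 11\mod 24$ with $a,b,c>1$, and the summand $g(a)g(b)g(c)$ is symmetric in $a$ and $c$. Since $a$ and $c$ lie in the \emph{same} class $11\mod 24$, interchanging them is an involution of the index set; because $n$ is square-free it is fixed-point free (a fixed point would need $a=c$, hence $a^2\mid n$), so the terms cancel in pairs and the double sum is $0$ in $\BF_2$. What remains is precisely the stated formula, and the induction closes.
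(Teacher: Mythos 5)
Your proposal is correct and follows essentially the same route as the paper: induction on the number of prime factors, substitution of the recursive identities of Proposition \ref{analytic sha ind 1} with the indicated residue-class bookkeeping, and cancellation of the triple sum in the $n\equiv 7\bmod 24$ case via the fixed-point-free involution $d\leftrightarrow d''$ on ordered factorizations $n=abc$ with $(a,b,c)\equiv(11,7,11)\bmod 24$. You merely spell out the cases the paper dismisses as ``similar,'' and your residue computations (e.g.\ $d\equiv n-18\bmod 72\Rightarrow d\equiv 15\bmod 24$ and $n/d\equiv 7\bmod 24$ when $n\equiv 9\bmod 24$) all check out.
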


\begin{proof}
Induction on the number of prime factors of $n$.
The formula of $\CL(n)$ for $n\equiv 5\mod 24$ is obtained as follows:
if $n\equiv 5\mod 24$ and $d\equiv 7\mod 24$,
then $n/d\equiv 11\mod 24$, so
$$
\CL(n)=g(n)+\sum_{\substack{d\neq 1,n\\
d\mid n\\
d\equiv 7\mod 24}}\CL(-n/d)\CL(d)
=g(n)+\sum_{\substack{d\neq 1,n\\
d\mid n\\
d\equiv 7\mod 24}}g(n/d)g(d).
$$
If $n\equiv 7\mod 24$ and $d\equiv 11\mod 24$,
then $n/d\equiv 5\mod 24$, so
\begin{align*}
\CL(-n)&=g(n)+\sum_{\substack{d\neq 1,n\\
d\mid n\\
d\equiv 11\mod 24}}\CL(n/d)\CL(-d)
=g(n)+\sum_{\substack{d\neq 1,n\\
d\mid n\\
d\equiv 11\mod 24}}
\left(g(n/d)+\sum_{\substack{d'\neq 1,n/d\\
d'\mid n/d\\
d'\equiv 7\mod 24}}g(n/dd')g(d')\right)g(d) \\
&=g(n)+\sum_{\substack{d\neq 1,n\\
d\mid n\\
d\equiv 11\mod 24}}
g(n/d)g(d)+\sum_{\substack{dd'd''=n\\
d\neq 1,d'\neq 1,d''\neq 1\\
(d,d',d'')\equiv(11,7,11)\mod 24}}g(d'')g(d')g(d).
\end{align*}
The last term is zero because there is a symmetry $d\leftrightarrow d''$.
Hence the formula of $\CL(-n)$ for $n\equiv 7\mod 24$ follows.
The others are similar.
\end{proof}

\subsection{$L$-values and ternary quadratic forms}

Integral definite ternary quadratic form is a classical object in number theory.
Our study on toric periods gives the following result.

\begin{thm}
\label{ternary 7}
For an integral positive definite ternary quadratic form $Q$, define
$$
r_Q(n):=\# \left\{ (x, y, z)\in \BZ^3\ \Big|\ Q(x, y, 2z)=n\right\}
-\frac{1}{2}\# \left\{ (x, y, z)\in \BZ^3\ \Big|\ Q(x, y, z)=n\right\}.
$$
Let $E:y^2=x(x-1)(x+3)$ and $n\equiv 7\mod 24$ be a positive square-free integer.
Then the analytic Sha of $E^{(\pm n)}$ satisfies
\begin{align*}
\CL(-n)&=4^{-\mu(n)}r_Q(n)^2
\quad\text{with}\quad
Q(x,y,z)=x^2+3y^2+36z^2, \\
\CL(n)&=4^{-\mu(n)}r_Q(n)^2
\quad\text{with}\quad
Q(x,y,z)=x^2+3y^2+12z^2.
\end{align*}
Similarly, for $n\equiv 3\mod 24$ positive square-free, we have
\begin{align*}
\CL(-n)&=4^{-\mu(n/3)}r_Q(n/3)^2
\quad\text{with}\quad
Q(x,y,z)=6x^2+y^2+2z^2, \\
\CL(n)&=4^{-\mu(n/3)}r_Q(n/3)^2
\quad\text{with}\quad
Q(x,y,z)=x^2+3y^2+4z^2.
\end{align*}
\end{thm}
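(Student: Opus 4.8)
The plan is to reduce the statement about analytic Sha to an identity between toric periods and representation numbers of ternary quadratic forms, using the Waldspurger machinery already developed in Section~\ref{s:toric-periods}. The key observation is that the quantity $\CL(\pm n)$ is, up to the explicit constants of Proposition~\ref{period0}(iv) and Theorem~\ref{period1}, equal to $|P_\chi(f)/2^{\mu(D)-1}|^2$ for the trivial character $\chi=1_K$ with $K=\BQ(\sqrt{-n})$ (or $\BQ(\sqrt{-n/3})$ in the $n\equiv 3$ case). Since the relevant Shimura set $X$ is built from a definite quaternion algebra $B$ ramified at $\Sigma^-$, the toric period $P_{1_K}(f)=\sum_{t\in\Cl(K)}f(\iota(t))$ becomes a sum of values of the primitive test vector over the CM points. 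The strategy is to identify this $B$, its order $R$, and the Shimura set $X$ explicitly for the residue class at hand, and to translate the sum $P_{1_K}(f)$ into a count of lattice vectors of a given norm, i.e.\ into $r_Q$ for the appropriate $Q$.

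First I would fix the residue class, say $n\equiv 7\mod 24$, and read off from Proposition~\ref{period0} which $\Sigma^-$ governs the relevant curve. For $E^{(-n)}$ this determines the definite quaternion algebra $B$ together with the order $R$ of discriminant $N$, and hence the Shimura set $X$; the test vector $f$ is the primitive one from Theorem~\ref{period1}, taking only odd values. Next I would realize $X$ and the pairing $\pair{f,f}$ concretely as a genus of ternary quadratic forms (equivalently, as ideal classes with their norm forms), so that $P_{1_K}(f)$ becomes a weighted theta-like sum. The central computation is to match the combination $\#\{Q(x,y,2z)=m\}-\tfrac12\#\{Q(x,y,z)=m\}$ against this period: the factor of $2z$ versus $z$ and the factor $\tfrac12$ encode exactly the projection to the $T_p$- and $K_v^\times$-eigenspace $V(E,\chi)$, i.e.\ the passage to $\BC[X]^0$ and the normalization of $f$. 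The forms $x^2+3y^2+36z^2$ and $x^2+3y^2+12z^2$ (resp.\ their $n\equiv 3$ analogues) arise as the norm forms of the specific orders associated to $E^{(-n)}$ and $E^{(n)}$.

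Having expressed $P_{1_K}(f)=r_Q(n)$ (up to a power of $2$ reflecting $2^{\mu(D)-1}$), I would feed this into the identity $|P_\chi(f)/2^{\mu(D)-1}|^2=C_{n,d}\,\CL(\pm n)\,T_{n,d}$ from Theorem~\ref{period1} with $d=n$. For $d=n$ one has $T_{n,n}=1/4$ and $C_{n,n}$ explicit, so after bookkeeping of the constants one obtains $\CL(\pm n)=4^{-\mu(n)}r_Q(n)^2$, matching the claimed power of $4$ via $\mu(D)=\mu(n)$ (or $\mu(D)=\mu(n/3)+1$ after accounting for the ramified prime $3$, which explains the shift to $r_Q(n/3)$ and the exponent $\mu(n/3)$ in the $n\equiv 3$ case). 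The remaining two forms for each residue class come from applying the same argument to the twin curve, swapping the roles of $E$ and $E^{(-1)}$ via Theorem~\ref{period1}(ii).

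The main obstacle will be the explicit identification of the Shimura set $X$ and the primitive test vector $f$ with a concrete ternary form, and in particular getting the local normalizations exactly right at $v=2$ and $v=3$. The delicate points are: verifying that the weighting $w_g^{-1}$ in $\pair{f,f}$ is absorbed correctly so that $P_{1_K}(f)$ is a genuine unweighted representation count; checking that the passage to $\BC[X]^0$ (subtracting the reduced-norm-factoring part) is precisely what produces the $-\tfrac12\#\{Q(x,y,z)=m\}$ correction term; and tracking the $2$-adic valuation so that the powers of $4$ and the factors $C_{n,d}$, $T_{n,d}$ combine into the clean exponent $4^{-\mu(n)}$ (resp.\ $4^{-\mu(n/3)}$). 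Once these local computations at $2$ and $3$ are pinned down, the rest is a direct substitution into the Waldspurger formula already proved.
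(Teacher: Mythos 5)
Your route is genuinely different from the paper's. The paper does not work with the quaternionic toric period at all here: it invokes Waldspurger's theorem on Fourier coefficients of half-integral weight forms (the reference \cite{Wald}), namely that for an eigenform $f=\sum b_nq^n\in S_{3/2}(N,\chi_{d_1})$ with Shimura lift $\phi^{(d_2)}$, the ratio $L(\phi^{(-n'd_1d_2)},1)\sqrt{n'}\chi_{d_1}(n')/b_{n'}^2$ is constant on each $\Sigma$-equivalence class. The proof then consists of exhibiting explicit eigenforms (at levels $576$ and $192$, with $(d_1,d_2)=(3,3)$ and $(1,-1)$ respectively) whose coefficients at square-free $n\equiv 7\bmod 24$ are exactly $r_Q(n)$ for the stated forms $Q$, and pinning down the constant $C$ by a single nonvanishing instance; the conversion of $L/\sqrt{n}$ into $\CL$ with the clean factor $4^{-\mu(n)}$ then follows from the twisting behaviour of periods and Tamagawa numbers in the appendix. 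Your approach instead goes through the Shimura set $X$, the primitive test vector, and $P_{1_K}(f)=\sum_t f(\iota(t))$; the two are of course linked by the theta correspondence, and your consistency check via Theorem \ref{period1} with $d=n$ (where $C_{n,n}T_{n,n}=1$) is sound, so in principle the route can work.

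However, as written there is a genuine gap: the entire content of the theorem is the identification of the period (equivalently, of the relevant half-integral weight eigenform) with the specific combination $\#\{Q(x,y,2z)=m\}-\tfrac12\#\{Q(x,y,z)=m\}$, and you defer exactly this step as ``the main obstacle.'' The assertion that the $2z$ substitution and the $-\tfrac12$ correction ``encode the projection to $\BC[X]^0$'' is not a proof and is not obviously true; that combination reflects the structure of a particular eigenform in the span of theta series of the genus at the given level, which must be verified by explicit local computation at $2$ and $3$ (or by a finite check against Hecke eigenvalues). Two further points need repair: the statement is an exact equality of integers, not a congruence, so the normalization of $P_{1_K}(f)$ against $r_Q(n)$ must be pinned down precisely (in the toric-period normalization one needs $P_{1_K}(f)=\pm r_Q(n)\cdot 2^{\mu(n)-2}$ for $n\equiv 7\bmod 24$, which does not follow from anything you state); and in the $n\equiv 3\bmod 24$ case the passage from $n$ to $n/3$ in the argument of $r_Q$ is not explained by $\mu(D)=\mu(n/3)+1$ --- it comes from the twist $d_1d_2=3$ in the Shimura correspondence, i.e.\ the coefficient index is $n/(d_1d_2)$, which has no counterpart in your period-side bookkeeping as described.
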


\begin{proof}
Here we only prove the $n\equiv 7\mod 24$ case; the $n\equiv 3\mod 24$ case is similar.
Let $\phi$ be the newform associated to $E$.
Let $f=\sum_{n=1}^\infty b_n q^n$ be an eigenform
in $S_{3/2}(N,\chi_{d_1})$ where $N$ is a multiple of the level of $\phi$,
and $\chi_{d_1}$ is the quadratic character of $\BQ(\sqrt{d_1})/\BQ$,
such that $\phi^{(d_2)}$ is the Shimura lifting of $f$.
Let $\fX$ be a $\Sigma$-equivalence class with root number $+1$,
such that
there exists a square-free integer $n\in d_1d_2\fX$
satisfying
$$
C:=\frac{L(\phi^{(-nd_1d_2)},1)\sqrt{n}\chi_{d_1}(n)}{b_n^2}\neq 0.
$$
Then by Waldspurger's theorem \cite{Wald},
for any square-free integer $n'\in d_1d_2\fX$,
$b_{n'}\neq 0$ if and only if $L(\phi^{(-n'd_1d_2)},1)\neq 0$,
and the constant $C$
only depends on the $\Sigma$-equivalence class $\fX$:
$$
C=\frac{L(\phi^{(-n'd_1d_2)},1)\sqrt{n'}\chi_{d_1}(n')}{b_{n'}^2}.
$$

The theorem for $\CL(-n)$ is obtained by the existence of
$f=\sum_{n=1}^\infty b_nq^n$ with $N=576$, $d_1=d_2=3$,
satisfying the above conditions, and
such that $b_n=r_Q(n)$ for all $n\equiv 7\mod 24$ square-free.

For the $\CL(n)$,
taking $Q(x,y,z)=x^2+3y^2+12z^2$,
then there is $f=\sum_{n=1}^\infty b_nq^n$ with $N=192$, $d_1=1$, $d_2=-1$,
satisfying the above conditions, and such that
$b_n=r_Q(n)$ for all $n\equiv 7\mod 24$ square-free,
hence by the same method we obtain the desired result.
\end{proof}

\begin{remark}
By looking at solutions with $xyz=0$, one can easily see that
for $Q(x,y,z)=x^2+3y^2+36z^2$ or
$Q(x,y,z)=x^2+3y^2+12z^2$,
for any prime $\ell\equiv 7\mod 24$, the $4^{-1}r_Q(\ell)$ is an odd integer.
\end{remark}

\section{$2$-Selmer groups and positive density}

Let $E/\BQ$ be an elliptic curve. Then the $2$-Selmer group
of $E$ is
$$
\Sel_2(E/\BQ):=\Ker \left(H^1(\BQ, E[2])\lra
\prod_v H^1(\BQ_v, E[2])\right).
$$
It is a finite dimensional $\BF_2$-vector space, and fits into the short exact sequence
$$
0\to E(\BQ)/2E(\BQ)\to\Sel_2(E/\BQ)\to\Sha(E/\BQ)[2]\to 0.
$$
In particular, if $\Sel_2(E/\BQ)$ modulo
torsion is trivial,
then $E$ is of Mordell-Weil rank $0$ and $\Sha(E/\BQ)[2]=0$.
The Propositions \ref{descent main} and \ref{descent main 3} are some examples
for tiling number elliptic curves
$\pm ny^2=x(x-1)(x+3)$
such that this situation occurs.
On the other hand, Proposition \ref{descent non-trivial example}
is an example that in some cases the $2$-Selmer group is always non-trivial,
hence under the assumption that $E$ is of Mordell-Weil rank $0$,
the $\Sha(E/\BQ)[2]$ is always non-trivial.

A useful property of $2$-Selmer group is
the $2$-parity conjecture, first proved by Monsky \cite{Mon96},
which asserts that $\dim_{\BF_2}\Sel_2(E/\BQ)\equiv
\displaystyle\operatorname*{ord}_{s=1}L(E,s)\mod 2$.

\subsection{$2$-descent for tiling number curves}
\label{s:descent}

The $2$-descent is a commonly-used method to study the structure of the Mordell-Weil
group of an elliptic curve. We briefly recall it as follows, see \cite{GTM106},
Chapter X.
The advantage of the $2$-descent method is that the
elements of the $2$-Selmer group
$\Sel_2(E/\BQ)$ correspond to the curves which are $2$-covers of $E$, and which
can be computed explicitly in terms of linear algebra.
In the following we do such computations for the tiling number elliptic curves.
It can also be computed in terms of graph theoretic language \cite{Feng},
for the tiling number elliptic curves
see the work of Qiuyue Liu, Jing Yang, Keqin Feng \cite{LYF},
and earlier works by Yoshida \cite{Yoshida}
and Goto \cite{Goto1} \cite{Goto2}.

\subsubsection{}

Let $m$ be a square-free integer and $E^{(m)}$ the elliptic curve over $\BQ$ defined by the Weierstrass equation $y^2=x(x-m)(x+3m)$.
Let $\Sigma$ be the set of primes containing $2,\infty$
and bad places for $E$, namely, $\Sigma=\{2,3,\infty\}$.
Let $S$ be the set of primes dividing $6m\infty$,
and $\Sigma_1:=S\setminus\Sigma$.
Let $\BQ(S, 2)$ be the subgroup
of $\BQ^\times/(\BQ^\times)^2$ supported on $S$, or equivalently, the set of square-free integers with prime factors in $S$. Then elements in $\Sel_2(E^{(m)}/\BQ)$ can be  realized as curves $C_\Lambda/\BQ$ with $C_\Lambda(\BA_\BQ)\neq \emptyset$.  Here for $\Lambda=(b_1, b_2)\in \BQ(S, 2)^{\oplus 2}$, define $C_\Lambda$ to be
$$ C_\Lambda: \qquad \begin{cases}b_1z_1^2-b_2z_2^2=mt^2\\
 b_1z_1^2-b_1b_2z_3^2=-3mt^2	
 \end{cases}$$
Note that $C_\Lambda(\BA_\BQ)\neq \emptyset$ if and only if $C_\Lambda(\BQ_v)\neq \emptyset$ for all $v\in S$.
The four $2$-torsion points $O$, $(0, 0)$, $(m, 0)$, and $(-3m, 0)$ correspond to
$(b_1, b_2)=(1, 1)$, $(-3, -m)$, $(m, 1)$, and $(-3m, -m)$.

For any $p\in \Sigma_1$,  $C_\Lambda(\BQ_p)\neq \emptyset$ if and only if
\begin{equation*}
\begin{cases}
\lrd{b_1}{p}=\lrd{b_2}{p}=1, &\text{if } p\nmid b_1b_2, \\
\lrd{m/b_1}{p}=\lrd{b_2}{p}=1, &\text{if }p\mid b_1,p\nmid b_2, \\
\lrd{-3b_1}{p}=\lrd{-m/b_2}{p}=1, &\text{if }p\nmid b_1,p\mid b_2, \\
\lrd{-3m/b_1}{p}=\lrd{-m/b_2}{p}=1, &\text{if }p\mid b_1, p\mid b_2.
\end{cases}
\end{equation*}
For $p\in \Sigma$, it turns out that $C_\Lambda(\BQ_p)\neq\emptyset$
if and only if $\loc_p(\Lambda)$ is contained in certain subspace
$W_p$ of $(\BQ_p^\times/(\BQ_p^\times)^2)^{\oplus 2}$
which only depends on the image of $m$ in $\BQ_p^\times/(\BQ_p^\times)^2$.
For example,
if $3\nmid m$, then $C_\Lambda(\BQ_3)\neq \emptyset$ if and only if
$3\nmid b_2$ and
\begin{equation*}
\begin{cases}
\lrd{b_2}{3}=1,&\text{if }3\nmid b_1,\\
\lrd{-b_2m}{3}=1, &\text{if }3\mid b_1,
\end{cases}
\end{equation*}
and if $3\mid m$, then $C_\Lambda(\BQ_3)\neq \emptyset$ if and only if
\begin{equation*}
\begin{cases}
\lrd{b_1}{3}=\lrd{b_2}{3}=1, &\text{if } 3\nmid b_1b_2, \\
\lrd{m/b_1}{3}=\lrd{b_2}{3}=1, &\text{if }3\mid b_1,3\nmid b_2, \\
\lrd{-m/3b_1}{3}=\lrd{-m/b_2}{3}=1, &\text{if }3\nmid b_1,3\mid b_2, \\
\lrd{-b_1/3}{3}=\lrd{-m/b_2}{3}=1, &\text{if }3\mid b_1, 3\mid b_2.
\end{cases}
\end{equation*}
It's easy to see that $C_\Lambda(\BR)\neq \emptyset$ if and and only if
$$\begin{cases} b_1b_2>0, \quad &\text{if } m>0,\\
b_2>0, &\text{if } m<0.	
\end{cases}$$
If $2\nmid m$, then it is easy to see that
if $C_\Lambda(\BQ_2)\neq \emptyset$ then $b_1$ must be odd, and one may compute the precise condition for $C_\Lambda(\BQ_2)\neq \emptyset$.
The followings are some examples.
\begin{itemize}
\item
In the case $m\equiv 3\mod 4$,   $C_\Lambda(\BQ_2)\neq \emptyset$ if and only if
$2\nmid b_1$ and $b_2\equiv 1\mod 4$.
\item
In the case $m\equiv 5\mod 8$,   $C_\Lambda(\BQ_2)\neq \emptyset$ if and only if
$b_1\equiv 1\mod 4$ and $2\nmid b_2$.
\item
In the case $m\equiv 1\mod 8$, $C_\Lambda(\BQ_2)\neq \emptyset$ if and only if
$(b_1\mod 8, b_2\mod 8)=(1,1)$, $(1,5)$, $(3,2)$, $(5,3)$, $(5,7)$ or $(7,6)$.
Equivalently, $2\nmid b_1$
and $\left(
\lrdd{-1}{b_1},
\lrdd{2}{b_1},
\lrdd{-1}{b_2/2^{\ord_2(b_2)}},
\lrdd{2}{b_2/2^{\ord_2(b_2)}},
\ord_2(b_2)\right)^\RT$
lies in the right kernel of
$\begin{pmatrix}
1 & 1 & 1 & 0 & 0 \\
1 & 0 & 0 & 0 & 1
\end{pmatrix}$.
\end{itemize}

\subsubsection{}
\label{s:matrices}

We introduce some matrices with entries in $\BF_2$.
If $n=\ell_1\cdots \ell_k$ is an odd positive integer,
then define the matrix $A=A(n)$ associated to $n$
to be $A=(a_{ij})\in M_{k\times k}(\BF_2)$,
where $a_{ij}=\lrdd{\ell_j}{\ell_i}
:=\frac12\left(1-\left(\frac{\ell_j}{\ell_i}\right)\right)$
(the additive Legendre symbol) if $i\neq j$ and $\sum_j a_{ij}=0$ for all $i$,
namely $a_{ii}=\left[\frac{n/\ell_i}{\ell_i}\right]$.
For an integer $d$ prime to $n$, define the column vector $z_d=z_d(n)$ associated to $n$
to be $z_d:=\left(\lrdd{d}{\ell_1},\cdots,\lrdd{d}{\ell_k}\right)^\RT$.
Finally, denote $D_d:=\diag(z_d)$.
It's easy to see that $A+A^{\mathrm T}=z_{-1}z_{-1}^\RT+D_{-1}$.

\begin{lem}
\label{A+D-1}
Let $A$ be the matrix associated to $n$ with $n\equiv 3\mod 4$.
Then $\rank(A+D_{-1})=\rank(A)+1$.
\end{lem}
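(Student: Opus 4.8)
The plan is to rewrite $A+D_{-1}$ as a rank-one perturbation of $A^\RT$ and then exhibit a single explicit vector that forces the rank to increase by exactly one. The only arithmetic input specific to the hypothesis $n\equiv 3\mod 4$ is a parity count, which is where that congruence is used.

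First I would record two structural facts. By construction each row of $A$ sums to $0$, so $A\mathbf 1=0$ with $\mathbf 1=(1,\dots,1)^\RT$; thus $\mathbf 1\in\ker A$. Feeding $\mathbf 1$ into the identity $A+A^\RT=z_{-1}z_{-1}^\RT+D_{-1}$ recorded above, and using $D_{-1}\mathbf 1=z_{-1}$, gives $A^\RT\mathbf 1=A\mathbf 1+z_{-1}(z_{-1}^\RT\mathbf 1)+z_{-1}$. Here the hypothesis enters: since $\lrdd{-1}{\ell_i}=1$ exactly when $\ell_i\equiv 3\mod 4$, and $n\equiv 3\mod 4$ forces an odd number of such primes among $\ell_1,\dots,\ell_k$, we get $z_{-1}^\RT\mathbf 1=1$ in $\BF_2$. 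Hence $A^\RT\mathbf 1=0$ as well, so $\mathbf 1\in\ker A\cap\ker A^\RT$ and $\mathbf 1$ pairs to $1$ with $z_{-1}$.

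Next I would add $D_{-1}$ to both sides of the same identity and cancel (we are in characteristic $2$) to obtain the clean formula
\[
A+D_{-1}=A^\RT+z_{-1}z_{-1}^\RT,
\]
displaying $A+D_{-1}$ as a rank-one update of $A^\RT$. I would then invoke the standard field-theoretic criterion: for a square matrix $M$ and vectors $u,v$, one has $\rank(M+uv^\RT)=\rank M+1$ if and only if $u\notin\Im(M)$ and $v\notin\Im(M^\RT)$ (the ``$\le$'' direction is immediate from $u=Mw\Rightarrow M+uv^\RT=M(I+wv^\RT)$, and the ``$+1$'' direction follows by computing the rank of the bordered matrix $\left(\begin{smallmatrix}M&u\\ v^\RT&1\end{smallmatrix}\right)$). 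Taking $M=A^\RT$ and $u=v=z_{-1}$, the two conditions to verify become $z_{-1}\notin\Im(A^\RT)$ and $z_{-1}\notin\Im(A)$, while $\rank M=\rank A^\RT=\rank A$.

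Finally I would discharge both non-membership conditions using the vector $\mathbf 1$ from the first step. Over $\BF_2$ with the standard pairing $\langle x,y\rangle=x^\RT y$ one has $\Im(A)^\perp=\ker(A^\RT)$ and $\Im(A^\RT)^\perp=\ker(A)$ (equality holds by a dimension count, so no $\BF_2$-isotropy subtlety arises). Since $\mathbf 1\in\ker(A^\RT)$ and $\mathbf 1^\RT z_{-1}=1$, the vector $z_{-1}$ is not orthogonal to $\ker(A^\RT)$, so $z_{-1}\notin\Im(A)$; symmetrically $\mathbf 1\in\ker(A)$ with $\mathbf 1^\RT z_{-1}=1$ gives $z_{-1}\notin\Im(A^\RT)$. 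Both conditions hold, whence $\rank(A+D_{-1})=\rank(A^\RT)+1=\rank(A)+1$. I do not expect a serious obstacle: the entire content is the parity identity $z_{-1}^\RT\mathbf 1=1$ (the one place $n\equiv 3\mod 4$ is needed) together with the happy coincidence that the single vector $\mathbf 1$ simultaneously certifies both non-membership conditions; the mildest point to state carefully is the rank-one criterion, but since we only use it in the direction where an explicit dual vector is available, this is routine.
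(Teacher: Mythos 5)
Your proof is correct and rests on exactly the same ingredients as the paper's: the identity $A+D_{-1}=A^{\mathrm T}+z_{-1}z_{-1}^{\mathrm T}$, the vanishing of both the row and column sums of $A$ (the latter being where $n\equiv 3\bmod 4$ enters), and the fact that the entries of $z_{-1}$ sum to $1$. The only difference is packaging: the paper carries out the rank-one-update step directly as a short chain of row operations on the bordered matrix $\left(\begin{smallmatrix}A+D_{-1}\\ z_{-1}^{\mathrm T}\end{smallmatrix}\right)$, whereas you invoke the general rank-one perturbation criterion and certify both non-membership conditions with the vector $\mathbf 1$.
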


\begin{proof}
Since $n$ has odd number of prime factors $\equiv 3\mod 4$,
it's easy to see that we also have $\sum_ia_{ij}=0$ for all $j$.
Therefore
$$
\rank(A+D_{-1})=\rank\begin{pmatrix}
A+D_{-1} \\ z_{-1}^{\mathrm T}
\end{pmatrix}
=\rank\begin{pmatrix}
A+D_{-1}+z_{-1}z_{-1}^{\mathrm T} \\ z_{-1}^{\mathrm T}
\end{pmatrix}
=\rank\begin{pmatrix}
A^{\mathrm T} \\ z_{-1}^{\mathrm T}
\end{pmatrix}
=\rank(A)+1,
$$
where the last equality holds since
the elements of $z_{-1}^{\mathrm T}$ sum to $1$
while the elements of any row of $A^{\mathrm T}$ sum to $0$.
\end{proof}

\begin{remark}
When $n\equiv 3\mod 4$,
it's known that $\rank_4\Cl(\BQ(\sqrt{-n}))
=\corank(A)-1=\corank(A+D_{-1})$.
Here for a matrix $M$, the $\corank(M)$ is the dimension
of the right kernel of $M$.
In particular, $g(n)\equiv\det(A+D_{-1})\mod 2$
where $g(n):=\#2\Cl(\BQ(\sqrt{-n}))$.
\end{remark}

\subsubsection{}

In the following we consider $E^{(m)}:y^2=x(x-m)(x+3m)$ with
$m$ square-free and varies in a fixed $\Sigma$-equivalence class.
Write $m=q n$ where $q\mid 6$
and $n=\ell_1\cdots \ell_k$ is coprime to $6$.
Let $\Lambda=(b_1,b_2)\in\BQ(S,2)^{\oplus 2}$.
For $t=1,2$,
write $b_t=c_t \prod_i \ell_i^{x_{t,i}}$ with $x_{t, i}\in \BF_2$ and $c_t\mid 6$,
furthermore, write $c_t=\prod_{p\in\Sigma}p^{y_t^{(p)}}$
with $y_t^{(p)}\in\BF_2$,
here the $\infty\in \Sigma$ is regarded as $-1$.
It's clear that $z_{c_t}=\sum_{p\in\Sigma}z_py_t^{(p)}$.
The condition that
$C_\Lambda(\BQ_v)\neq\emptyset$ for all $v\in \Sigma_1$
is equivalent to the equation
$$\matrixx{A+D_q}{D_{-3}}{}{A+D_{-q}} \begin{pmatrix} x_1\\ x_2\end{pmatrix}= \begin{pmatrix} z_{c_1}\\ z_{c_2}\end{pmatrix}.$$
For $v\in \Sigma$, the condition that $C_\Lambda(\BQ_v)=0$
is equivalent to $\loc_v(\Lambda)\in W_p$ for certain subspace $W_p$
of $(\BQ_p^\times/(\BQ_p^\times)^2)^{\oplus 2}$ which only depends on the $\Sigma$-equivalence class
of $m$, and which can be expressed as a linear equation in $\BF_2$
involving $x_t$ and $y_t^{(p)}$.
For example, if $m\equiv 1\mod 8$, then the condition that
$C_\Lambda(\BQ_2)\neq\emptyset$ is equivalent to $y_1^{(2)}=0$ and
$$
\begin{pmatrix}
z_{-2}^\RT & z_{-1}^\RT & 1 & 0 & 1 & 0 & 1 \\
z_{-1}^\RT & 0 & 1 & 1 & 0 & 1 & 0
\end{pmatrix}
\begin{pmatrix}
x_1 \\ x_2 \\ y_1^{(-1)} \\ y_1^{(3)} \\ y_2^{(-1)} \\ y_2^{(2)} \\ y_2^{(3)}
\end{pmatrix}
=0.
\qquad
$$
Therefore, when $m$ varies in a fixed $\Sigma$-equivalence class,
whether an element $\Lambda=(b_1,b_2)$
is contained in $\Sel_2(E^{(m)}/\BQ)$ can be expressed in term
of a linear equation in $\BF_2$ involving $x_t$ and $y_t^{(p)}$,
whose form only depends on the $\Sigma$-equivalence class.
In the following we will see some examples.

The following is the result for $E^{(\pm n)}$, $n\equiv 7\mod 24$ case.

\begin{prop}\label{descent main}
{\rm(i)}
Let $n\equiv 7\mod 12$ be a positive square-free integer.
Then
$$
\dim_{\BF_2}\big(\Sel_2(E^{(n)}/\BQ)\big/ E^{(n)}[2]\big)
=2k-\rank\begin{pmatrix}
A & D_{-3} & z_3 \\
& A+D_{-1} &
\end{pmatrix}.
$$
In particular,
\begin{itemize}
\item
$\BQ(\sqrt{-n})$ has no ideal classes of order $4$ if and only if
$\Sel_2(E^{(n)}/\BQ)\big/ E^{(n)}[2]=0$.
\item
If all the prime factors of $n$
are $\equiv 1\mod 3$, then
$$
\dim_{\BF_2}\big(\Sel_2(E^{(n)}/\BQ)\big/ E^{(n)}[2]\big)
=2\cdot\rank_4\Cl(\BQ(\sqrt{-n})).
$$
\end{itemize}

{\rm(ii)}
Let $n\equiv 7\mod 24$ be a positive square-free integer. Then
$$
\dim_{\BF_2}\big(\Sel_2(E^{(-n)}/\BQ)\big/ E^{(-n)}[2]\big)
=2k+1-\rank\begin{pmatrix}
A+D_{-1} & D_{-3} & \\
& A & z_2 \\
& z_{-3}^{\mathrm T} & 1 \\
z_{-2}^{\mathrm T} & z_{-1}^{\mathrm T} &
\end{pmatrix}.
$$
In particular,
\begin{itemize}
\item
$\det\begin{pmatrix}
z_{-1}z_{-2}^{\mathrm T}+z_2z_{-1}^{\mathrm T} & A^{\mathrm T}+D_{-1} \\
A+D_{-1} & D_{-3} \\
\end{pmatrix}$ is odd
if and only if
$\Sel_2(E^{(-n)}/\BQ)\big/ E^{(-n)}[2]=0$.
\item
If all the prime factors of $n$
are $\equiv 1\mod 3$, then
$$
\dim_{\BF_2}\big(\Sel_2(E^{(-n)}/\BQ)\big/ E^{(-n)}[2]\big)
=2\cdot\rank_4\Cl(\BQ(\sqrt{-n})).
$$
\end{itemize}
\end{prop}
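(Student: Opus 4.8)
The plan is to identify $\Sel_2(E^{(\pm n)}/\BQ)$ with the solution space of an explicit system of $\BF_2$-linear equations, compute its dimension as (number of unknowns) minus (rank of the coefficient matrix), and subtract $2$ at the very end to pass to the quotient by the rational $2$-torsion $E^{(\pm n)}[2]\cong\BF_2^2$, which always lies in the Selmer group since all three $2$-torsion points are rational. I would take as unknowns the exponent vectors $x_1,x_2\in\BF_2^k$ of $b_1,b_2$ at the primes $\ell_i\mid n$, together with the coordinates $y_t^{(p)}$ (for $t\in\{1,2\}$, $p\in\{-1,2,3\}$) recording the $\Sigma$-part of $b_t$. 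As recalled in \S\ref{s:matrices}, the conditions $C_\Lambda(\BQ_v)\neq\emptyset$ for $v\in\Sigma_1$ are exactly the block system with matrix $\left(\begin{smallmatrix}A+D_q&D_{-3}\\&A+D_{-q}\end{smallmatrix}\right)$ and right-hand side $(z_{c_1},z_{c_2})^\RT$, here with $q=1$ so $D_q=0$; the conditions at $v\in\{2,3,\infty\}$ supply the remaining equations.

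Next I would substitute the explicit local criteria. At $\infty$ the sign condition fixes one combination of $y_1^{(-1)},y_2^{(-1)}$ (for $E^{(n)}$, $m>0$, it is $b_1b_2>0$; for $E^{(-n)}$, $m<0$, it is $b_2>0$). At $2$, for part (i) the criterion $m\equiv3\bmod4$ is the simple one ($2\nmid b_1$ and $b_2\equiv1\bmod4$), forcing $y_1^{(2)}=y_2^{(2)}=0$ and one mod-$4$ relation; for part (ii) the criterion $m\equiv1\bmod8$ is the intricate five-variable one governed by $\left(\begin{smallmatrix}1&1&1&0&0\\1&0&0&0&1\end{smallmatrix}\right)$, which is what ultimately produces the extra row $(z_{-2}^\RT\ z_{-1}^\RT)$ and the surviving $z_2$-column. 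At $3$, since $3\nmid m$, the two branches of the $3$-adic criterion (on whether $3\mid b_1$) must be merged into a single $\BF_2$-equation using the value of $\lrd{m}{3}$ (which is $1$ for $E^{(n)}$ and $-1$ for $E^{(-n)}$, as $n\equiv1\bmod3$) together with $\lrd{-1}{3}=-1$; this is where the $z_3$-column, resp.\ the row $(z_{-3}^\RT\ 1)$, enters.

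The technical heart, and the step I expect to be the main obstacle, is then to reduce this raw system to the clean matrix in the statement. After using the mod-$4$, sign, and $3$-adic relations as pivot equations to eliminate the auxiliary $\Sigma$-variables, I would rewrite the resulting symbols via quadratic reciprocity --- concretely $\lrdd{3}{\ell_i}=\lrdd{\ell_i}{3}+\lrdd{-1}{\ell_i}$, i.e.\ the ``reversed'' $3$-symbols assemble into $z_3+z_{-1}$ --- and repeatedly apply the identity $A+A^\RT=z_{-1}z_{-1}^\RT+D_{-1}$ from \S\ref{s:matrices}. Row and column operations of Schur-complement/transpose type, exactly in the spirit of the proof of Lemma \ref{A+D-1}, then convert the eliminated blocks into the stated form $\left(\begin{smallmatrix}A&D_{-3}&z_3\\&A+D_{-1}&\end{smallmatrix}\right)$ (resp.\ the $(2k+2)\times(2k+1)$ matrix in (ii)); the net effect on the rank, combined with the parameter count and the final $-2$ for torsion, yields $2k-\rank$ (resp.\ $2k+1-\rank$). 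The delicate part is the faithful linearization of the branched local criteria --- especially the five-variable $m\equiv1\bmod8$ condition in part (ii) --- and matching the eliminated system to the precise symbols $z_2,z_3,z_{-1},z_{-2},z_{-3}$, which demands careful and consistent reciprocity bookkeeping.

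Finally, the ``In particular'' assertions follow by feeding the rank formula into the genus-theory dictionary $\rank_4\Cl(\BQ(\sqrt{-n}))=\corank(A+D_{-1})=\corank(A)-1$ recorded after Lemma \ref{A+D-1}. Triviality of the Selmer quotient is equivalent to the coefficient matrix having full rank; since the bottom block rows are supported on the $x_2$-columns, full rank forces $A+D_{-1}$ to be invertible, i.e.\ $\rank_4\Cl=0$, i.e.\ no ideal classes of order $4$, while the converse direction additionally uses that $z_3$ is not in the column space of $A$ in the relevant case (a reciprocity/genus input). For (ii) the same equivalence is repackaged as the stated symmetric determinant being odd. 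When every prime factor of $n$ is $\equiv1\bmod3$ one has $\lrdd{\ell_i}{3}=0$ for all $i$, whence $z_3=z_{-1}$ and $D_{-3}=0$; the two diagonal blocks then decouple, the rank splits as $\rank(A\mid z_{-1})+\rank(A+D_{-1})=(\rank A+1)+(\rank A+1)$, and one obtains $\dim=2k-2\rank(A)-2=2\,\corank(A+D_{-1})=2\,\rank_4\Cl$, as claimed.
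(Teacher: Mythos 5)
Your proposal follows essentially the same route as the paper: translate the local solvability conditions on the torsors $C_\Lambda$ into an $\BF_2$-linear system in the exponent vectors $x_t$ and the $\Sigma$-coordinates $y_t^{(p)}$, reduce to the displayed block matrix by row/column operations, reciprocity and the identity $A+A^\RT=z_{-1}z_{-1}^\RT+D_{-1}$, and then read off the ``in particular'' statements from genus theory exactly as you do (including the correct use of $z_3\notin\operatorname{col}(A)$ and the decoupling when all $\ell_i\equiv 1\bmod 3$). The bookkeeping you flag as the main obstacle is precisely what the paper's proof records --- the explicit $(2k+2)\times(2k+2)$, resp.\ $(2k+3)\times(2k+3)$, kernel matrix $B$ together with the list of row/column dependencies reducing it to the stated form --- so your plan is sound and matches the paper's argument.
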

\begin{proof}
(i)
First we prove that
when $n\equiv 7\mod 12$ is positive square-free,
the $\Sel_2(E^{(n)}/\BQ)$ is isomorphic to the right kernel of the matrix
$$
B:=\begin{pmatrix}
A & D_{-3} & z_{-1} & z_3 \\
& A+D_{-1} & z_{-1} & \\
& z_{-1}^{\mathrm T} & 1 & \\
& z_{-3}^{\mathrm T} & 1 & 1 \\
\end{pmatrix}.
$$
For all other cases in this section, there are similar results.
We explain the linear algebra details for this case, and omit them for all other cases,
since the method is very similar:
the condition that $C_\Lambda(\BQ_v)\neq\emptyset$ for $v=\infty,2,3$
translates as
\begin{itemize}
\item
$v=\infty$: $y_1^{(-1)}=y_2^{(-1)}$,
\item
$v=2$: $y_1^{(2)}=y_2^{(2)}=0$ and $y_2^{(-1)}+y_2^{(3)}+z_{-1}^{\mathrm T}x_2=0$,
\item
$v=3$: $y_2^{(3)}=0$ and $y_1^{(3)}+y_2^{(-1)}+y_2^{(2)}+z_{-3}^{\mathrm T}x_2=0$,
\end{itemize}
therefore it's easy to see that
$(b_1,b_2)\in\Sel_2(E^{(n)}/\BQ)$ if and only if
$y_1^{(-1)}=y_2^{(-1)}$, $y_1^{(2)}=y_2^{(2)}=y_2^{(3)}=0$ and
$\left(\begin{smallmatrix}
x_1 \\ x_2 \\ y_1^{(-1)} \\ y_1^{(3)}
\end{smallmatrix}\right)$
lies in the right kernel of $B$.

Notice that in the $(2k+2)\times(2k+2)$ matrix $B$,
the rows $1$ to $k$ (resp.~rows $k+1$ to $2k$,
resp.~columns $k+1$ to $2k$ together with $2k+2$) sum to the row $2k+2$
(resp.~the row $2k+1$, resp.~the column $2k+1$),
so the desired result follows easily.

Assume that $\BQ(\sqrt{-n})$ has no order $4$ ideal classes,
then $\rank(A)=k-1$ and $A+D_{-1}$ is invertible,
hence
$$
\rank\begin{pmatrix}
A & D_{-3} & z_3 \\
& A+D_{-1} &
\end{pmatrix}
=\rank\begin{pmatrix}
A & & z_3 \\
& A+D_{-1} &
\end{pmatrix}
=\rank(A)+\rank(A+D_{-1})+1=2k,
$$
so $\Sel_2(E^{(n)}/\BQ)\big/ E^{(n)}[2]=0$.
Conversely,
assume that $\BQ(\sqrt{-n})$ has an order $4$ ideal class,
then $\rank(A+D_{-1})\leq k-1$,
hence
$$
\rank\begin{pmatrix}
A & D_{-3} & z_3 \\
& A+D_{-1} &
\end{pmatrix}
\leq k+\rank(A+D_{-1})\leq 2k-1,
$$
so $\Sel_2(E^{(n)}/\BQ)\big/ E^{(n)}[2]\neq 0$.
Finally, if all the prime factors of $n$
are $\equiv 1\mod 3$, then $D_{-3}=0$, it's easy to see that
the desired result holds.

(ii)
In this case
the $\Sel_2(E^{(-n)}/\BQ)$ is isomorphic to the right kernel of the matrix
$$
B:=\begin{pmatrix}
A+D_{-1} & D_{-3} & z_{-1} & z_3 & \\
& A & & & z_2 \\
& z_{-3}^{\mathrm T} & & & 1 \\
z_{-2}^{\mathrm T} & z_{-1}^{\mathrm T} & 1 & & \\
z_{-1}^{\mathrm T} & & 1 & 1 & 1
\end{pmatrix}.
$$
Notice that in the $(2k+3)\times(2k+3)$ matrix $B$,
the columns $1$ to $k$ (resp.~columns $1$ to $2k$,
resp.~rows $1$ to $k$ together with $2k+1$) sum to the column $2k+1$
(resp.~the column $2k+2$, resp.~the row $2k+3$),
so the desired result follows easily.

Since the sum of elements of $z_{-1}$ (resp.~$z_2$) is $1$
(resp.~$0$), and that $A+A^{\mathrm T}=z_{-1}z_{-1}^{\mathrm T}+D_{-1}$, we have
\begin{align*}
\rank\begin{pmatrix}
A+D_{-1} & D_{-3} & \\
& A & z_2 \\
& z_{-3}^{\mathrm T} & 1 \\
z_{-2}^{\mathrm T} & z_{-1}^{\mathrm T} &
\end{pmatrix}
&=\rank\begin{pmatrix}
A+D_{-1} & D_{-3} & \\
z_{-1}z_{-2}^{\mathrm T} & A^{\mathrm T}+D_{-1} & z_2 \\
z_{-1}^{\mathrm T} & & 1 \\
\end{pmatrix} \\
&=\rank\begin{pmatrix}
A+D_{-1} & D_{-3} \\
z_{-1}z_{-2}^{\mathrm T}+z_2z_{-1}^{\mathrm T} & A^{\mathrm T}+D_{-1} \\
\end{pmatrix}+1,
\end{align*}
which gives the result on the condition of
$\Sel_2(E^{(-n)}/\BQ)\big/ E^{(-n)}[2]=0$.
Finally, if all the prime factors of $n$
are $\equiv 1\mod 3$, then $D_{-3}=0$ and $z_{-3}^{\mathrm T}=0$,
it's easy to see that the desired result holds.
\end{proof}

The following example illustrates that in $n\equiv 1\mod 12$ case the $2$-Selmer group
of $E^{(-n)}$
modulo torsion
is always non-trivial, coincides with the $2$-divisibility of analatic Sha
$\CL(-n)$
in Proposition \ref{analytic sha ind 1}.

\begin{prop}
\label{descent non-trivial example}
Let $n\neq 1$, $n\equiv 1\mod 12$ be a positive square-free integer.
Then
$$
\dim_{\BF_2}\big(\Sel_2(E^{(-n)}/\BQ)\big/ E^{(-n)}[2]\big)
=2k-\rank\begin{pmatrix}
A+D_{-1} & D_{-3} \\
& A
\end{pmatrix},
$$
which is always a positive integer
(thus $\geq 2$ by parity).
\end{prop}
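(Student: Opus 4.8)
The plan is to run the $2$-descent of \S\ref{s:descent} for $E^{(m)}$ with $m=-n$ and $n\equiv 1\mod 12$, in the style of Proposition \ref{descent main}, realize $\Sel_2(E^{(-n)}/\BQ)$ as the right kernel of an explicit matrix, reduce its rank to that of $\left(\begin{smallmatrix}A+D_{-1}&D_{-3}\\&A\end{smallmatrix}\right)$, and then read off positivity from an elementary singularity of $A$. First I would translate the conditions $C_\Lambda(\BQ_v)\neq\emptyset$ into $\BF_2$-linear equations. Since $m=-n\equiv 3\mod 4$ and $3\nmid m$, the places $\infty$ and $2$ force $y_2^{(-1)}=y_1^{(2)}=y_2^{(2)}=0$ and $y_2^{(3)}+z_{-1}^{\mathrm T}x_2=0$. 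The essential difference from Proposition \ref{descent main}(i) occurs at $v=3$: because $n\equiv 1\mod 3$ gives $\lrd{m}{3}=\lrd{-n}{3}=-1$, the two branches of the criterion (according as $3\mid b_1$ or not) both collapse to $\lrd{b_2}{3}=1$, so \emph{no} $y_1^{(3)}$-term appears and one gets only $y_2^{(3)}=0$ together with $z_{-3}^{\mathrm T}x_2=0$. Assembling these with the $\Sigma_1$-equation (with $q=-1$, whence $D_q=D_{-1}$, $D_{-q}=0$ and $z_{c_2}=0$) shows that $\Sel_2(E^{(-n)}/\BQ)$ is the right kernel of the $(2k+2)\times(2k+2)$ matrix
$$
B=\begin{pmatrix}
A+D_{-1} & D_{-3} & z_{-1} & z_3 \\
 & A & & \\
 & z_{-1}^{\mathrm T} & & \\
 & z_{-3}^{\mathrm T} & &
\end{pmatrix},
$$
acting on $(x_1,x_2,y_1^{(-1)},y_1^{(3)})^{\mathrm T}$.

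Next I would show $\rank(B)=\rank\left(\begin{smallmatrix}A+D_{-1}&D_{-3}\\&A\end{smallmatrix}\right)=:\rank(M)$, which yields $\dim_{\BF_2}\Sel_2(E^{(-n)}/\BQ)=(2k+2)-\rank(M)$ and, after quotienting by the two-dimensional $E^{(-n)}[2]$, the stated formula. The reduction rests on four identities valid for $n\equiv 1\mod 12$: $A\mathbf 1=0$; $\mathbf 1^{\mathrm T}A=z_{-1}^{\mathrm T}$ (the column sums, using that $\#\{i:\ell_i\equiv 3\bmod 4\}$ is even); $z_{-3}=z_{-1}+z_3$; and the crucial vanishing $\mathbf 1^{\mathrm T}z_3=\sum_i\lrdd{3}{\ell_i}=0$, i.e.\ $\lrd{3}{n}=\lrd{n}{3}=1$ by reciprocity. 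Using $\mathbf 1^{\mathrm T}A=z_{-1}^{\mathrm T}$, the row $z_{-1}^{\mathrm T}$ is the sum of the middle block and so drops out, while $z_{-3}^{\mathrm T}$ reduces to $z_3^{\mathrm T}$; adding $\mathbf 1^{\mathrm T}$ times the top and middle blocks to this last row then annihilates it \emph{precisely because} $\mathbf 1^{\mathrm T}z_3=0$. Finally the two border columns $z_{-1}$ and $z_3$ lie in the column space of $M$ (for $z_{-1}$, because $z_{-1}^{\mathrm T}w=\mathbf 1^{\mathrm T}Aw=0$ for every $w\in\ker(A^{\mathrm T}+D_{-1})$; for $z_3$, via $b=\mathbf 1\in\ker A$ and $(A+D_{-1})a=z_{-1}$, so that $M(a,\mathbf 1)^{\mathrm T}=(z_3,0)^{\mathrm T}$), so they too may be cleared. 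This rank reduction is the step I expect to be the most delicate, as it is exactly where the hypothesis $n\equiv 1\mod 12$ enters, through $\lrd{3}{n}=1$.

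Positivity is then immediate: $A$ has zero row sums, so $A\mathbf 1=0$ with $\mathbf 1\neq 0$, whence $\det A=0$ and, as $M$ is block upper triangular, $\det M=\det(A+D_{-1})\det A=0$; thus $\rank(M)\le 2k-1$ and $2k-\rank(M)\ge 1$. To upgrade to $\ge 2$ I invoke Monsky's $2$-parity theorem: for $n\equiv 1\mod 12$ the curve $E^{(-n)}$ has root number $+1$ (cf.\ Proposition \ref{analytic sha ind 1} and the classes $[-1],[-13]$), so $\ord_{s=1}L(E^{(-n)},s)$ is even, hence $\dim_{\BF_2}\Sel_2(E^{(-n)}/\BQ)$ is even and therefore so is $2k-\rank(M)=\dim_{\BF_2}\Sel_2(E^{(-n)}/\BQ)-2$. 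Being a positive even integer, it is $\ge 2$.
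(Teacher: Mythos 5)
Your proposal is correct and follows essentially the same route as the paper: the same $(2k+2)\times(2k+2)$ descent matrix $B$ (up to swapping the last two rows), the same elimination of the border rows and columns using $A\mathbf 1=0$, $\mathbf 1^{\mathrm T}A=z_{-1}^{\mathrm T}$ and $\operatorname{sum}(z_{-1})=\operatorname{sum}(z_3)=0$, and the same positivity-plus-Monsky-parity conclusion. The only cosmetic difference is that you justify clearing the column $z_{-1}$ by a duality argument with the left kernel, where the paper simply observes that the first $k$ columns of $B$ sum to it.
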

\begin{proof}
In this case
the $\Sel_2(E^{(-n)}/\BQ)$ is isomorphic to the right kernel of the matrix
$$
B:=\begin{pmatrix}
A+D_{-1} & D_{-3} & z_{-1} & z_3 \\
& A & & \\
& z_{-3}^{\mathrm T} & & \\
& z_{-1}^{\mathrm T} & & \\
\end{pmatrix}.
$$
Note that since $n\equiv 1\mod 4$, the sum of rows of $A$
equals $z_{-1}^{\mathrm T}$,
hence in the $(2k+2)\times(2k+2)$ matrix $B$,
the columns $1$ to $k$ (resp.~columns $k+1$ to $2k$,
resp.~rows $1$ to $k$, resp.~rows $k+1$ to $2k$) sum to the column $2k+1$
(resp.~the column $2k+2$, resp.~the row $2k+1$, resp.~the row $2k+2$),
so the desired result follows easily
(note that $\rank(A)\leq k-1$).
\end{proof}

The following is the result for $E^{(\pm 3n)}$,
$3n\equiv 3\mod 24$ case.

\begin{prop}
\label{descent main 3}
Let $q\in\{\pm 3\}$ and
$n\neq 1$, $n\equiv 1\mod 4$ be a positive square-free integer coprime to $6$.
If $q=-3$, further require that $n\equiv 1\mod 8$.
Then
$$
\dim_{\BF_2}\big(\Sel_2(E^{(qn)}/\BQ)\big/ E^{(qn)}[2]\big)
=2k+1-\left\{\begin{array}{ll}
\rank\begin{pmatrix}
A+D_3 & D_{-3} & z_{-1} \\
& A+D_{-3} & z_{-1} \\
& z_{-1}^{\mathrm T} & 1 \\
\end{pmatrix},&\text{if }q=+3, \\[1.6em]
\rank\begin{pmatrix}
A+D_3 & & \\
D_{-3} & A+D_{-3} & z_{-1} \\
& z_{-1}^{\mathrm T} & 1 \\
\end{pmatrix},&\text{if }q=-3.
\end{array}\right.
$$
In particular,
\begin{itemize}
\item
$\BQ(\sqrt{-3n})$ has no ideal classes of order $4$ if and only if
$\Sel_2(E^{(qn)}/\BQ)\big/ E^{(qn)}[2]=0$.
\item
If all the prime factors of $n$
are $\equiv 1\mod 3$, then
$$
\dim_{\BF_2}\big(\Sel_2(E^{(qn)}/\BQ)\big/ E^{(qn)}[2]\big)
=2\cdot\rank_4\Cl(\BQ(\sqrt{-3n})).
$$
\end{itemize}
\end{prop}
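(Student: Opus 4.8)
The plan is to reuse the three-step template behind Proposition~\ref{descent main}: realize $\Sel_2(E^{(qn)}/\BQ)$ as the right kernel of an explicit $\BF_2$-matrix $B$, reduce $B$ by row and column operations to read off its rank, and then translate the answer into information about $\Cl(\BQ(\sqrt{-3n}))$.

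\textbf{Step 1 (assembling $B$).} Write $m=qn$ with $q=\pm3$, so that $3\mid m$ and the primes dividing $n$ constitute $\Sigma_1$. These contribute the block
$$\matrixx{A+D_q}{D_{-3}}{}{A+D_{-q}}$$
of the general setup of \S\ref{s:descent}, acting on $(x_1,x_2)$; for $q=+3$ it is $\matrixx{A+D_3}{D_{-3}}{}{A+D_{-3}}$ and for $q=-3$ it is $\matrixx{A+D_{-3}}{D_{-3}}{}{A+D_3}$. The crucial simplification is the residue of $m$ at $2$: for $q=+3$ one has $m=3n\equiv3\mod4$, and for $q=-3$ with $n\equiv1\mod8$ one has $m=-3n\equiv5\mod8$; in either case the condition at $2$ is the \emph{simple} one recorded in \S\ref{s:descent}, forcing one of $b_1,b_2$ to be odd (so the corresponding $y_t^{(2)}=0$) and the other $\equiv1\mod4$. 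The $n\equiv5\mod8$ subcase is excluded for $q=-3$ precisely because there $m\equiv1\mod8$ would trigger the complicated $2$-adic condition. Combining this with the constraint at $\infty$ ($b_1b_2>0$ for $q=+3$, $b_2>0$ for $q=-3$) and the four-subcase ``$3\mid m$'' condition at $3$, each surviving requirement becomes a single $\BF_2$-linear relation among the $y_t^{(p)}$ and the forms $z_{-1}^{\RT}x_t,z_{-3}^{\RT}x_t$. Eliminating the vanishing $y_t^{(p)}$ then produces the matrices displayed in the statement, the extra row $(z_{-1}^{\RT},1)$ and column $z_{-1}$ recording the residual $\infty$-and-$2$ constraint, and the block swap between $q=+3$ and $q=-3$ reflecting the interchange of $b_1,b_2$ above.

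\textbf{Step 2 (the rank formula).} As in the earlier proofs I would exhibit the explicit column and row dependencies, so that the four classes of $E^{(qn)}[2]$ account for the predicted drop and
$$\dim_{\BF_2}\big(\Sel_2(E^{(qn)}/\BQ)/E^{(qn)}[2]\big)=2k+1-\rank B',$$
with $B'$ the reduced $(2k+1)\times(2k+1)$ matrix in the statement.

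\textbf{Step 3 (link to $\Cl(\BQ(\sqrt{-3n}))$), the hard part.} Since $n\equiv1\mod4$ gives $3n\equiv3\mod4$, Lemma~\ref{A+D-1} and the Remark after it, applied to $3n$, give $\rank_4\Cl(\BQ(\sqrt{-3n}))=\corank(A(3n)+D_{-1})$, where $A(3n)$ is the $(k+1)\times(k+1)$ R\'edei matrix of $3n$. The key observation is that the $\ell$-block of $A(3n)$ is exactly $A+D_3$: indexing the primes of $3n$ as $3,\ell_1,\dots,\ell_k$, the diagonal at $\ell_i$ is $\lrdd{(3n)/\ell_i}{\ell_i}=\lrdd{3}{\ell_i}+\lrdd{n/\ell_i}{\ell_i}$, i.e. the diagonal of $A$ shifted by $(z_3)_i$, while the remaining column and row carry $(z_3)_i=\lrdd{3}{\ell_i}$ and $\lrdd{\ell_j}{3}$. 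Using the identities $D_{-3}=D_{-1}+D_3$ (from $\lrdd{-3}{\ell}=\lrdd{-1}{\ell}+\lrdd{3}{\ell}$) and $A+A^{\RT}=z_{-1}z_{-1}^{\RT}+D_{-1}$, together with quadratic reciprocity relating $z_3,z_{-3}$ to the $3$-row $(\lrdd{\ell_j}{3})_j$, I would reduce $B'$ until the blocks $A+D_3$ and $A+D_{-3}=A+D_3+D_{-1}$ reassemble into $A(3n)$ and $A(3n)+D_{-1}$, yielding $\rank B'=2k+1\iff\corank(A(3n)+D_{-1})=0$, that is, $\Sel_2(E^{(qn)}/\BQ)/E^{(qn)}[2]=0$ iff $\BQ(\sqrt{-3n})$ has no order-$4$ ideal class. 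When every prime factor of $n$ is $\equiv1\mod3$ one checks $z_{-3}=0$, hence $D_{-3}=0$ and $z_3=z_{-1}$, so $B'$ splits and its corank computes $2\cdot\rank_4\Cl(\BQ(\sqrt{-3n}))$. I expect this reciprocity bookkeeping --- reassembling the $3$-adic data $D_3,D_{-3},z_3,z_{-3}$ into the R\'edei matrix of $3n$ while keeping the two signs $q=\pm3$ and the residues $n\equiv1,5\mod8$ straight --- to be the main obstacle and the likeliest source of sign errors.
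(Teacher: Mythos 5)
Your proposal follows essentially the same route as the paper's proof: realize $\Sel_2(E^{(qn)}/\BQ)$ as the right kernel of an explicit matrix built from the local conditions at $\Sigma_1$, $2$, $3$, $\infty$ (correctly identifying why $n\equiv 1\bmod 8$ is needed when $q=-3$), reduce by the row/column dependencies coming from $E^{(qn)}[2]$ to get the $2k+1-\rank$ formula, and then identify the $\ell$-block of the R\'edei matrix $A(3n)$ with $A+D_3$ (and its $D_{-1}$-shift with $A+D_{-3}$) to translate the rank condition into $\rank_4\Cl(\BQ(\sqrt{-3n}))$. The outline is correct and matches the paper's argument in all essentials, including the degenerate case where all prime factors of $n$ are $\equiv 1\bmod 3$.
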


\begin{proof}
The $\Sel_2(E^{(3n)}/\BQ)$ is isomorphic to the right kernel of the matrix
$$
B_1:=\begin{pmatrix}
A+D_3 & D_{-3} & z_{-1} & z_3 & \\
& A+D_{-3} & z_{-1} & & z_3 \\
& z_{-1}^{\mathrm T} & 1 & & 1 \\
z_{-3}^{\mathrm T} & & 1 & \left[\frac{n}{3}\right] & 1+\left[\frac{n}{3}\right] \\[0.2em]
& z_{-3}^{\mathrm T} & 1 & & 1+\left[\frac{n}{3}\right]
\end{pmatrix}.
$$
Notice that in the $(2k+3)\times(2k+3)$ matrix $B_1$,
the columns $1$ to $k$ (resp.~columns $1$ to $2k+1$,
resp.~rows $1$ to $2k+1$, resp.~rows $k+1$ to $2k+1$) sum to the column $2k+2$
(resp.~the column $2k+3$, resp.~the row $2k+2$, resp.~the row $2k+3$),
so the desired result follows easily.

The $\Sel_2(E^{(-3n)}/\BQ)$ is isomorphic to the right kernel of the matrix
$$
B_2:=\begin{pmatrix}
A+D_{-3} & D_{-3} & z_{-1} & z_3 & \\
& A+D_3 & & & z_3 \\
z_{-1}^{\mathrm T} & & 1 & 1 & \\
z_{-3}^{\mathrm T} & & 1 & 1+\left[\frac{n}{3}\right] & \left[\frac{n}{3}\right] \\[0.2em]
& z_{-3}^{\mathrm T} & & & \left[\frac{n}{3}\right]
\end{pmatrix}.
$$
Notice that in the $(2k+3)\times(2k+3)$ matrix $B_2$,
the columns $1$ to $k$ together with column $2k+1$ (resp.~columns $1$ to $2k$,
resp.~rows $1$ to $2k+1$, resp.~rows $k+1$ to $2k$) sum to the column $2k+2$
(resp.~the column $2k+3$, resp.~the row $2k+2$, resp.~the row $2k+3$),
so the desired result follows easily.

The R\'edei matrix of $\BQ(\sqrt{-3n})$ is
$A(3n)=\begin{pmatrix}
A+D_3 & z_3 \\
z_{-3}^{\mathrm T} & \left[\frac{n}{3}\right]
\end{pmatrix}$, similar to Lemma \ref{A+D-1}, $\rank(A(3n))+1
=\rank\begin{pmatrix}
A+D_{-3} & z_3 \\
z_{-3}^{\mathrm T} & 1+\left[\frac{n}{3}\right]
\end{pmatrix}$. By adding the columns $1$ to $k$ (resp.~rows $1$ to $k$)
to the column $k+1$ (resp.~row $k+1$), the $A(3n)$
and $\begin{pmatrix}
A+D_{-3} & z_3 \\
z_{-3}^{\mathrm T} & 1+\left[\frac{n}{3}\right]
\end{pmatrix}$ are equivalent to
$\begin{pmatrix}
A+D_3 & 0 \\
0 & 0
\end{pmatrix}$ and $\begin{pmatrix}
A+D_{-3} & z_{-1} \\
z_{-1}^{\mathrm T} & 1
\end{pmatrix}$, respectively.
Therefore $\BQ(\sqrt{-3n})$ has no ideal classes of order $4$ if and only if
$\rank(A+D_3)=k$, and it's easy to see that this is equivalent to
$\Sel_2(E^{(qn)}/\BQ)\big/ E^{(qn)}[2]=0$.
Finally, if all the prime factors of $n$
are $\equiv 1\mod 3$, then $D_{-3}=0$ and $D_3=D_{-1}$, it's easy to see that
the desired result holds.
\end{proof}

\begin{remark}
In fact, by the same method, for each $\Sigma$-equivalence class
$\fX$
one can produce a matrix whose entries are $A$, $D_d$ and $z_d$
such that its corank is equal to the $2$-Selmer of $E^{(n)}$,
$n\in\fX$.
In particular, if $n$ is positive square-free,
then for $n\equiv 3,7\mod 12$,
the $\Sel_2(E^{(n)}/\BQ)\big/ E^{(n)}[2]=0$ if and only if $\BQ(\sqrt{-n})$
has no ideal classes of order $4$;
for $n\equiv 2,6\mod 12$ or $n\equiv 3,11\mod 24$,
the $\Sel_2(E^{(-n)}/\BQ)\big/ E^{(-n)}[2]=0$ if and only if $\BQ(\sqrt{-n})$
has no ideal classes of order $4$.
This coincides with the analytic Sha result in Theorem \ref{analytic sha main}.
\end{remark}

\subsection{Limit density and joint distribution}
\label{s:positive density}

\subsubsection{}

First we briefly recall the limit density result by
the work of Gerth \cite{Gerth}.

In this section, the matrices $A$, $z_d$ and $D_d$ have slightly
different, but strongly correlated meaning than that of \S\ref{s:matrices}.
Let $\Sigma$ be a finite set consisting of $-1$ and some primes of $\BQ$.
For each $k\geq 1$, define
$$
\Omega_{k,\Sigma}:=\left\{\big((a_{ij}\in\BF_2)_{1\leq i<j\leq k},
(z_p=(z_i^{(p)})_{1\leq i\leq k}\in\BF_2^k)_{p\in\Sigma}\big)\right\},
$$
endowed with normalized counting measure.
For each $\omega\in\Omega_{k,\Sigma}$, we define $k\times k$
matrices $A=A(\omega)$ and $D_p=D_p(\omega)$ by
\begin{itemize}
\item
$z_p=z_p(\omega):=(z_1^{(p)},\cdots,z_k^{(p)})^{\mathrm T}$,
$D_p=D_p(\omega):=\diag(z_p)$,
\item
$a_{ji}:=z_i^{(-1)}z_j^{(-1)}+a_{ij}$ for $1\leq i<j\leq k$,
$a_{ii}:=\sum_{j\neq i}a_{ij}$,
$A=A(\omega):=(a_{ij})$.
\end{itemize}
For $d\in\BQ(\Sigma,2)$, we define $z_d$ and $D_d$
as $\BF_2$-linear combinations of $z_p$ and $D_p$.

The set $\Omega_{k,\Sigma}$ is endowed with a natural $S_k$-action,
and if $n=\ell_1\cdots\ell_k$ is a positive square-free integer coprime to $\Sigma$
with $k$ prime factors,
then it produces an element $\omega(n)$ of $\Omega_{k,\Sigma}$
well-defined up to $S_k$-action,
which is $a_{ij}=\lrdd{\ell_j}{\ell_i}$ for $i\neq j$,
$a_{ii}=\left[\frac{n/\ell_i}{\ell_i}\right]$,
and $z_i^{(p)}=\lrdd{p}{\ell_i}$.
The matrices $A$, $z_d$ and $D_d$ associated to $\omega(n)$
have the same meaning as that of \S\ref{s:matrices}.

\begin{thm}[Gerth \cite{Gerth}]
Let $\Omega_0$ be a subset of $\Omega_{k,\Sigma}$
stable by $S_k$-action. Then
$$
\lim_{N\to\infty}
\BP\left(\omega(n)\in\Omega_0~\Big|~n<N
\text{ positive square-free coprime to }\Sigma
\text{ with }k\text{ prime factors}\right)=\frac{\#\Omega_0}{\#\Omega_{k,\Sigma}}.
$$
\end{thm}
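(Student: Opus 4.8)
The plan is to treat $\Omega_{k,\Sigma}$ as a finite $\BF_2$-vector space with its uniform (normalized counting) measure, and to prove that the point $\omega(n)$ becomes equidistributed as $n$ ranges over positive square-free integers coprime to $\Sigma$ with exactly $k$ prime factors. First I would fix the increasing ordering $\ell_1<\cdots<\ell_k$ of the prime factors of $n$ as a section of the $S_k$-action, producing a genuine point $\omega(n)\in\Omega_{k,\Sigma}$; since $\Omega_0$ is $S_k$-stable, membership $\omega(n)\in\Omega_0$ does not depend on the chosen ordering, so the asserted probability is unaffected by this choice. It then suffices to show that the law of $\omega(n)$ converges to the uniform measure on $\Omega_{k,\Sigma}$, i.e. that each of the $\#\Omega_{k,\Sigma}$ fibers is hit with equal asymptotic frequency, giving $\#\Omega_0/\#\Omega_{k,\Sigma}$. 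Because the free coordinates of $\omega(n)$ are precisely the Legendre symbols $a_{ij}=\lrdd{\ell_j}{\ell_i}$ for $i<j$ and $z_i^{(p)}=\lrdd{p}{\ell_i}$ for $p\in\Sigma$ — quadratic reciprocity, which relates $\lrd{\ell_j}{\ell_i}$ to $\lrd{\ell_i}{\ell_j}$, being already encoded in the model through $a_{ji}=z_i^{(-1)}z_j^{(-1)}+a_{ij}$ — the whole problem reduces to the joint equidistribution in $\{\pm1\}$ of this fixed finite collection of quadratic symbols of the prime factors of $n$.

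The cleanest way to organize the main term is to introduce the primes one at a time in increasing order and argue conditionally. Given $\ell_1<\cdots<\ell_{j-1}$, the new coordinates contributed by $\ell_j$ are $a_{ij}=\lrdd{\ell_j}{\ell_i}$ for $i<j$, which depend only on $\ell_j\bmod \ell_1\cdots\ell_{j-1}$, and $z_j^{(p)}=\lrdd{p}{\ell_j}$ for $p\in\Sigma$, which (via reciprocity) depend only on $\ell_j$ modulo a fixed integer $M_\Sigma$ dividing $8\prod_{p\in\Sigma\setminus\{-1\}}p$. The key algebraic point is that, as functions of $\ell_j$, these are multiplicatively independent quadratic characters modulo $M:=M_\Sigma\prod_{i<j}\ell_i$: by the Chinese Remainder Theorem the $\chi_i=\lrd{\cdot}{\ell_i}$ surject on the distinct factors $(\BZ/\ell_i\BZ)^\times$, while the $\lrd{p}{\cdot}$, $p\in\Sigma$, are independent on $(\BZ/M_\Sigma\BZ)^\times$, so the joint map $(\BZ/M\BZ)^\times\to\{\pm1\}^{(j-1)+\#\Sigma}$ is a surjective homomorphism. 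Hence each prescribed combination of new symbols corresponds to exactly $\#(\BZ/M\BZ)^\times/2^{(j-1)+\#\Sigma}$ residue classes, and Dirichlet's theorem on primes in arithmetic progressions shows, for fixed $\ell_1,\dots,\ell_{j-1}$, that $\ell_j$ lands in each class with equal frequency in the limit. Iterating over $j=1,\dots,k$ yields the uniform distribution on $\Omega_{k,\Sigma}$.

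The hard part will be making this conditional argument uniform. Dirichlet together with Siegel--Walfisz controls primes modulo $M$ only when $M$ is at most a fixed power of $\log N$, whereas here $M=M_\Sigma\prod_{i<j}\ell_i$ can be as large as a fixed power of $N$, far outside the admissible range; so one cannot simply freeze all but one prime and invoke equidistribution in a single residue system. The standard remedy — and the technical heart of Gerth's theorem — is to recast the count through a Fourier expansion over $\Omega_{k,\Sigma}$, reducing everything to showing $\frac{1}{\#\{n\}}\sum_n\psi(\omega(n))\to 0$ for every nontrivial additive character $\psi$, and then to extract cancellation from the resulting average of products of Jacobi symbols $\prod_{(i,j)\in S}\lrd{\ell_j}{\ell_i}$ directly, flipping the symbols by reciprocity and applying the large sieve for real characters (in the style of the bilinear Jacobi-symbol estimates of Heath-Brown and Fouvry--Kl\"uners). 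Since $k$ is fixed, only finitely many symbols occur and the relevant moments are of bounded order, so these large-sieve inputs suffice to bound each nontrivial character average by $o$ of the main term; the bookkeeping of these bilinear sums, rather than the algebra above, is where the real work lies.
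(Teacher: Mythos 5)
The paper does not prove this statement at all: it is quoted verbatim from Gerth's 1984 paper and used as a black box, so there is no internal proof to compare your argument against. Judged on its own terms, your outline is the standard modern route to this equidistribution result (essentially the one in Cremona--Odoni and Fouvry--Kl\"uners, rather than Gerth's original presentation): reduce to the uniformity of the law of $\omega(n)$, expand the indicator of $\Omega_0$ in additive characters of the $\BF_2$-vector space $\Omega_{k,\Sigma}$, and show that each nontrivial character average $\frac{1}{\#\{n\}}\sum_n\psi(\omega(n))$, which is an average of products of symbols $\lrd{\ell_j}{\ell_i}$ and $\lrd{p}{\ell_i}$, tends to $0$. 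Your diagnosis of why the naive conditional argument fails is exactly right, and worth emphasizing: for fixed $k$ the set of $n<N$ all of whose prime factors except the largest lie below $(\log N)^{A}$ has vanishing relative density (the $\log\log$ of the $i$-th prime factor is roughly uniform in $[0,\log\log N]$), so Siegel--Walfisz cannot carry the main term and some bilinear cancellation is unavoidable.

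Two points to tighten if you were to write this up. First, not every nontrivial $\psi$ has bilinear structure: a character supported only on the $z^{(p)}$-coordinates produces a sum like $\sum_n\lrd{-1}{\ell_1}$, which must be handled separately by prime number theorem in progressions to the \emph{fixed} modulus $8\prod_{p\in\Sigma}p$ (easy, but it is a distinct case from the large-sieve one). Second, for the genuinely bilinear characters the plan does close for fixed $k$: after reciprocity, split each pair $(\ell_i,\ell_j)$ into dyadic ranges; the range where both primes exceed $\exp((\log N)^{\varepsilon})$ is killed by the large sieve for real characters (even the classical multiplicative large sieve suffices off the near-diagonal, and the near-diagonal $\ell_i\asymp\ell_j$ carries a negligible proportion of $n$ since it loses a factor $\log\log N$), while the complementary range where some prime is below $\exp((\log N)^{\varepsilon})$ contributes at most $O(\varepsilon)$ of the total count and is bounded trivially before letting $\varepsilon\to 0$. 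So the gap you flag is real but standard and fillable from the literature you cite; the skeleton of the argument is correct.
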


Taking $\Sigma=\{-1\}$,
Gerth \cite{Gerth}
considered the following probability
$$
\delta_{k,m}:=\BP\left(\corank(A+D_{-1})=m\right)
=\BP\left(\corank(A)=m+1\right)
$$
among the subset of $\Omega_{k,\Sigma}$
such that the sum of elements of $z_{-1}$ equals $1$,
and concluded that $(\delta_{k,m})_{m=0}^\infty$ is connected by a Markov chain
as $k$ varies,
which yields the limit
$$
\delta_{\infty,m}:=\lim_{k\to\infty}\delta_{k,m}
=2^{-m^2}\prod_{i=1}^m(1-2^{-i})^{-2}\prod_{i=1}^\infty(1-2^{-i}),
$$
for example,
$\delta_{\infty,0}=\prod_{i=1}^\infty(1-2^{-i})
=0.288788\cdots$, $\delta_{\infty,1}=2\delta_{\infty,0}$
and $\delta_{\infty,2}=\frac{4}{9}\delta_{\infty,0}$.
The above theorem implies that the $\delta_{k,m}$
satisfies
$$
\delta_{k,m}=\operatorname{Prob}\big(\rank_4\Cl(\BQ(\sqrt{-n}))=m\mid
n\equiv 3\mod 4\text{ square-free positive with }k\text{ prime factors}\big),
$$
hence the $\delta_{\infty,m}$ is equal to the limit density:
$$
\delta_{\infty,m}=\limProb\big(\rank_4\Cl(\BQ(\sqrt{-n}))=m\mid
n\equiv 3\mod 4\text{ square-free positive}\big).
$$

\begin{remark}
Via analytic tools, Kane \cite{Kane}
(generalized by Smith \cite{Smith2},
another independent analytic method by Fouvry-Kl\"uners \cite{FK2})
made a transition from the limit density
to natural density.
For details, one can also see \cite{PT}.
\end{remark}

\subsubsection{}

By extending the above works, we obtain the following positive density results
for tiling number elliptic curve $E:y^2=x(x-1)(x+3)$.

\begin{thm}
\label{positive density 3}
Among the set of positive square-free integers $n\equiv 3\mod 24$,
the limit density of $n$ such that
\begin{itemize}
\item
$\Sel_2(E^{(n)}/\BQ)\big/ E^{(n)}[2]=0$,
\item
$\Sel_2(E^{(-n)}/\BQ)\big/ E^{(-n)}[2]=0$,
\item
both of $\Sel_2(E^{(\pm n)}/\BQ)\big/ E^{(\pm n)}[2]$ are zero,
\end{itemize}
are all equal to $\delta_{\infty,0}
=0.288788\cdots$.
\end{thm}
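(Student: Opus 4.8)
The plan is to reduce all three events to a single class-group condition and then to compute its density by extending the equidistribution result of Gerth \cite{Gerth}. Write $n=3m$ with $m=n/3$ positive square-free, coprime to $6$, and $m\equiv 1\mod 8$; let $k$ be the number of prime factors of $m$, so that $n$ has $k+1$ prime factors and $k\to\infty$ in the limit density. Applying Proposition \ref{descent main 3} with $q=+3$ to $E^{(n)}=E^{(3m)}$ and with $q=-3$ to $E^{(-n)}=E^{(-3m)}$ (the required congruences $m\equiv 1\mod 4$, resp.~$m\equiv 1\mod 8$, both hold), each of the conditions $\Sel_2(E^{(n)}/\BQ)/E^{(n)}[2]=0$ and $\Sel_2(E^{(-n)}/\BQ)/E^{(-n)}[2]=0$ is equivalent to $\BQ(\sqrt{-n})$ having no ideal class of order $4$. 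Hence the three subsets in the statement literally coincide, their densities are equal, and it suffices to prove
$$
\limProb\big(\rank_4\Cl(\BQ(\sqrt{-n}))=0 \mid n\equiv 3\mod 24\text{ positive square-free}\big)=\delta_{\infty,0}.
$$

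Next I would pass to the matrix model of \S\ref{s:positive density}. By the computation in the proof of Proposition \ref{descent main 3} one has $\rank_4\Cl(\BQ(\sqrt{-n}))=\corank(A+D_3)$, where $A=A(m)$ and $D_3=\diag(z_3)$ are the $k\times k$ matrices of \S\ref{s:matrices} attached to $m$. I would then invoke the equidistribution theorem of Gerth with $\Sigma=\{-1,2,3\}$: as $m$ runs over positive square-free integers coprime to $6$ with $k$ prime factors, $\omega(m)$ equidistributes in $\Omega_{k,\Sigma}$, so that the $a_{ij}$ with $i<j$ and the vectors $z_{-1},z_2,z_3$ become independent and uniform. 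The congruence $m\equiv 1\mod 8$ translates into the two linear conditions $\sum_i z_i^{(-1)}=0$ and $\sum_i z_i^{(2)}=0$. Since $z_2$ does not occur in $A+D_3$, the second condition is independent of $\corank(A+D_3)$ and may be discarded, reducing the problem to showing
$$
\lim_{k\to\infty}\BP\big(\corank(A+D_3)=0 \mid \textstyle\sum_i z_i^{(-1)}=0\big)=\delta_{\infty,0}.
$$

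The structural observation that makes this tractable is that $z_3$ is a free uniform vector, independent of the $a_{ij}$ and of $z_{-1}$ and unconstrained by the remaining conditioning; hence the diagonal of $N:=A+D_3$, namely the vector $(a_{ii}+z_i^{(3)})_i$, is uniform and independent of its strict upper triangle. Since moreover $N+N^{\mathrm T}=A+A^{\mathrm T}=z_{-1}z_{-1}^{\mathrm T}+D_{-1}=:S$, it follows that, conditionally on $z_{-1}$, the matrix $N$ is distributed uniformly over the affine space $\{N:N+N^{\mathrm T}=S\}$. I would then analyse $\corank(N)$ for this ensemble by the same recursion on $k$ that underlies Gerth's Markov chain, checking that the corank transition probabilities as $k\to k+1$ are unchanged and hence that $\corank(N)$ converges to the stationary law $(\delta_{\infty,m})_{m\ge 0}$; the case $m=0$ then yields the theorem.

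The hard part is this final step. The present ensemble differs from Gerth's original in two ways: the parity $\sum_i z_i^{(-1)}=0$ is opposite to his $\sum_i z_i^{(-1)}=1$, and the diagonal of $N$ is independently randomized by $z_3$ rather than coupled to $z_{-1}$, so that $N$ fills the entire fibre $\{N:N+N^{\mathrm T}=S\}$ rather than a section of it. I expect the bulk of the work to be verifying that neither change affects the limiting corank distribution; one would do this by expressing $\corank(N)$ through the quadratic form $v\mapsto v^{\mathrm T}Nv$, whose polarization is $S$, and its restriction to $\operatorname{rad}(S)$, and then tracking how this data evolves as a new prime factor is adjoined.
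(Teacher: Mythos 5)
Your reduction is correct and coincides with the paper's: by Proposition \ref{descent main 3} all three conditions are equivalent to $\BQ(\sqrt{-n})$ having no ideal class of exact order $4$, i.e.\ to $\corank\big(A(m)+D_3(m)\big)=0$ for $m=n/3$, and Gerth's equidistribution theorem with $\Sigma=\{-1,2,3\}$ turns this into a question about a random matrix ensemble over $\BF_2$ conditioned on $\operatorname{sum}(z_{-1})=\operatorname{sum}(z_2)=0$ (with the $z_2$ condition harmlessly discarded). The problem is that everything after that point is a plan rather than a proof. The decisive claim --- that the ensemble of matrices $N$ uniform on $\{N:N+N^{\mathrm T}=S\}$ with $S=z_{-1}z_{-1}^{\mathrm T}+D_{-1}$ and $\operatorname{sum}(z_{-1})=0$ has limiting corank distribution $(\delta_{\infty,m})_m$ --- is exactly the content of the theorem, and you leave it at ``I would then analyse \ldots checking that the corank transition probabilities are unchanged'' and ``I expect the bulk of the work to be verifying that neither change affects the limit.'' Neither of the two deviations from Gerth's setup that you correctly identify (the flipped parity of $\operatorname{sum}(z_{-1})$, and the free diagonal coming from $z_3$) is innocuous on its face: in Gerth's case the row sums of $A$ vanish, so $(1,\ldots,1)^{\mathrm T}$ always lies in the right kernel of $A$ and the whole Markov-chain analysis is organized around that degeneracy, whereas $N=A+D_3$ kills it generically. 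So the recursion genuinely has to be redone, and that work is absent.

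The paper sidesteps this entirely. Instead of modifying Gerth's ensemble, it observes that $\corank\big(A(m)+D_3(m)\big)=\corank\big(A(n)+D_{-1}(n)\big)$ for $n=3m$ (the row/column reduction in the proof of Proposition \ref{descent main 3}), so the event in question is the pullback, under the ``adjoin the prime $3$'' map $\psi_k:\Omega_{k,\{-1,3\}}/S_k\to\Omega_{k+1,\{-1\}}/S_{k+1}$, of an $S_{k+1}$-stable event in Gerth's \emph{original} $(k+1)$-variable ensemble, where his theorem applies verbatim. The only new input needed is Lemma \ref{p:add-primes}, a short Chernoff-type counting argument showing that densities of $S_{k+1}$-stable sets change by at most $O(\log k/\sqrt k)$ under $\psi_k^{-1}$. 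To complete your argument you should either import that lemma (which makes your final step unnecessary) or actually carry out the modified Markov-chain computation; as written, the proof stops exactly where the theorem begins.
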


\begin{proof}
By Proposition \ref{descent main 3},
all of these conditions are equivalent to that $g(n)$ is odd,
namely, $\corank(A+D_{-1})=0$.
The set of positive square-free integers
$n\equiv 3\mod 24$ with $k+1$ prime factors
are one-to-one correspond to the set of positive square-free integers
$n/3\equiv 1\mod 8$ coprime to $3$,
and the element $\omega(n)\in\Omega_{k+1,\{-1\}}/S_{k+1}$
is the image of $\omega(n/3)\in\Omega_{k,\{-1,2,3\}}/S_k$
under the following map:
$$
\psi:\Omega_{k,\{-1,2,3\}}/S_k
=\Omega_{k,\{-1,3\}}/S_k\times\{z_2\in\BF_2^k\}/S_k
\twoheadrightarrow\Omega_{k,\{-1,3\}}/S_k
\xrightarrow{\psi_k}\Omega_{k+1,\{-1\}}/S_{k+1},
$$
where the map $\psi_k$ is defined in Lemma \ref{p:add-primes} below.
Taking $\Omega$ to be the subset of $\Omega_{k+1,\{-1\}}$
such that the sum of elements of $z_{-1}$ equals $1$,
and taking $\Omega_0$ to be the subset of $\Omega$ such that $\corank(A+D_1)=0$.
Then the result of Gerth \cite{Gerth}
just says that $\frac{\#\Omega_0}{\#\Omega}\to\delta_{\infty,0}$
as $k\to\infty$.
Consider their preimages,
utilizing Lemma \ref{p:add-primes},
the desired result follows easily.
\end{proof}

\begin{lem}
\label{p:add-primes}
Let $\Sigma$ be a finite set consisting of $-1$ and some primes of $\BQ$,
$q$ be a prime not in $\Sigma$, and $\Sigma':=\Sigma\cup\{q\}$.
Let $k\geq 1$ be an integer,
$\psi_k:\Omega_{k,\Sigma'}/S_k\to\Omega_{k+1,\Sigma}/S_{k+1}$
be the unique map which makes the following diagram commutes
$$
\xymatrix@C=3em{
~S_k(\Sigma')~\ar[d]\ar@{^(->}[r]^-{n\mapsto qn} & S_{k+1}(\Sigma)\ar[d] \\
\Omega_{k,\Sigma'}/S_k\ar[r]^-{\psi_k} & \Omega_{k+1,\Sigma}/S_{k+1}
}
$$
where $S_k(\Sigma')$ and $S_{k+1}(\Sigma)$
are the set of positive square-free integers
coprime to $\Sigma'$ with $k$ prime factors,
resp. coprime to $\Sigma$ with $k+1$ prime factors.
Let $\Omega_0$ be a subset of $\Omega_{k+1,\Sigma}$ stable by $S_{k+1}$-action.
Then
$$
\left|\frac{\#\psi_k^{-1}(\Omega_0)}{\#\Omega_{k,\Sigma'}}
-\frac{\#\Omega_0}{\#\Omega_{k+1,\Sigma}}\right|\leq 2^{\#\Sigma}\cdot
\frac{\log(k+1)+13/12}{\sqrt{k+1}}.
$$
\end{lem}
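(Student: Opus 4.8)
The plan is to make the map $\psi_k$ completely explicit and then reduce the inequality to a total variation estimate between two multinomial laws.

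\emph{Step 1: an explicit model for $\psi_k$.} First I would label the $k+1$ primes of $qn$ as $\ell_0=q,\ell_1,\dots,\ell_k$, the last $k$ being the primes of $n$. For $p\in\Sigma$ the additive Legendre symbols $\epsilon_p:=\lrdd{p}{q}$ depend only on $q$, so they are constants; set $\epsilon_0:=(\epsilon_p)_{p\in\Sigma}$. Writing out $\omega(qn)\in\Omega_{k+1,\Sigma}$ and using reciprocity in the form $\lrdd{\ell_j}{q}=\lrdd{q}{\ell_j}+\lrdd{-1}{q}\lrdd{-1}{\ell_j}=z_j^{(q)}+\epsilon_{-1}z_j^{(-1)}$, one sees that every entry of $\omega(qn)$ is determined by $\omega(n)$: the block $(a_{ij})_{1\le i<j\le k}$ and the vectors $(z_j^{(p)})_{j\ge1}$ ($p\in\Sigma$) are copied unchanged, the new column $(a_{0j})_{j\ge1}$ equals $z_j^{(q)}+\epsilon_{-1}z_j^{(-1)}$, and the new entries $z_0^{(p)}$ are the constants $\epsilon_p$. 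Hence the construction $\Psi:\omega(n)\mapsto\omega(qn)$ is an affine isomorphism from $\Omega_{k,\Sigma'}$ onto the slice
$$
\Omega_{k+1,\Sigma}^{q}:=\big\{\tilde\omega\in\Omega_{k+1,\Sigma}\ \big|\ z_0^{(p)}(\tilde\omega)=\epsilon_p\ \text{for all }p\in\Sigma\big\},
$$
and is measure preserving (its inverse recovers $z_j^{(q)}=a_{0j}+\epsilon_{-1}z_j^{(-1)}$). Since $\Omega_0$ is $S_{k+1}$-stable, writing $\mu$ for the uniform measure on $\Omega_{k+1,\Sigma}$ and $\nu:=\mu(\,\cdot\mid\Omega_{k+1,\Sigma}^{q})$, the bijection $\Psi$ gives
$$
\frac{\#\psi_k^{-1}(\Omega_0)}{\#\Omega_{k,\Sigma'}}=\nu(\Omega_0),\qquad \frac{\#\Omega_0}{\#\Omega_{k+1,\Sigma}}=\mu(\Omega_0),
$$
so the two measures differ only in that $\nu$ forces index $0$ to carry the fixed $\Sigma$-profile $\epsilon_0$.

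\emph{Step 2: reduction to a profile-count statistic.} Under $\mu$ the profiles $z_i^{(\cdot)}:=(z_i^{(p)})_{p\in\Sigma}\in\BF_2^{\#\Sigma}$, $0\le i\le k$, are i.i.d.\ uniform and independent of the block $(a_{ij})$. Let $\mathbf N=(N_\epsilon)_\epsilon$ be the profile-count vector, $N_\epsilon=\#\{i:z_i^{(\cdot)}=\epsilon\}$. Because $\Omega_0$ is $S_{k+1}$-stable, a double-counting over $S_{k+1}$-orbits shows that for every $\epsilon$ with $N_\epsilon\ge1$,
$$
\mu(\Omega_0\mid \mathbf N,\,z_0^{(\cdot)}=\epsilon)=\mu(\Omega_0\mid\mathbf N)=:h(\mathbf N),
$$
the point being that inside any orbit the fraction of labellings placing profile $\epsilon$ at index $0$ is $N_\epsilon/(k+1)$, independent of the orbit. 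Consequently $\mu(\Omega_0)=\BE_{\mathbf N\sim\mu}[h(\mathbf N)]$ while $\nu(\Omega_0)=\BE_{\mathbf N\sim\nu}[h(\mathbf N)]$, where the law of $\mathbf N$ under $\mu$ is the uniform multinomial $\mathrm{Mult}(k+1)$ on the $2^{\#\Sigma}$ cells, and under $\nu$ it is the same multinomial with one trial frozen into the cell $\epsilon_0$, i.e.\ its $\epsilon_0$-size-biasing. As $0\le h\le1$,
$$
\left|\nu(\Omega_0)-\mu(\Omega_0)\right|\le d_{\mathrm{TV}}\big(\mathbf N_\mu,\mathbf N_\nu\big).
$$

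\emph{Step 3: the multinomial estimate.} By the size-biasing identity $\BP_\nu(\mathbf N=\mathbf n)=\tfrac{2^{\#\Sigma}n_{\epsilon_0}}{k+1}\BP_\mu(\mathbf N=\mathbf n)$ one gets
$$
d_{\mathrm{TV}}\big(\mathbf N_\mu,\mathbf N_\nu\big)=\frac{2^{\#\Sigma}}{2(k+1)}\,\BE\big|N_{\epsilon_0}-(k+1)2^{-\#\Sigma}\big|,\qquad N_{\epsilon_0}\sim\mathrm{Bin}\big(k+1,2^{-\#\Sigma}\big),
$$
so it only remains to estimate the mean absolute deviation of this binomial. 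Already the Jensen bound $\BE|N_{\epsilon_0}-(k+1)2^{-\#\Sigma}|\le\sqrt{(k+1)2^{-\#\Sigma}}$ yields $d_{\mathrm{TV}}\le 2^{\#\Sigma/2-1}/\sqrt{k+1}$, and since $2^{-\#\Sigma/2-1}\le 1/2<13/12\le\log(k+1)+13/12$, this is already $\le 2^{\#\Sigma}\big(\log(k+1)+13/12\big)/\sqrt{k+1}$, proving the claim a fortiori; the displayed shape, with the Robbins--Stirling correction $\tfrac1{12}$ visible in $13/12$, is presumably what a direct summation of the binomial point masses through Stirling's formula produces. I expect the main obstacle to be organizing the orbit-counting in Step 2 cleanly, so that $h$ is genuinely a function of $\mathbf N$ alone; once that is in place, Step 1 is bookkeeping with reciprocity and Step 3 is a routine binomial computation.
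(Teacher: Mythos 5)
Your proof is correct, and its core combinatorial input is the same as the paper's: both arguments rest on the observation that, within any $S_{k+1}$-orbit, the fraction of labellings placing the fixed $\Sigma$-profile $\left(\lrdd{p}{q}\right)_{p\in\Sigma}$ at the distinguished index equals $s/(k+1)$, where $s$ is the number of indices carrying that profile (your $N_{\epsilon_0}$; the paper's stratification parameter for its pieces $\Omega_{k+1,\Sigma,s}$). Where you genuinely diverge is the quantitative finish. The paper splits the sum over $s$ at $|s-(k+1)2^{-\#\Sigma}|\geq(k+1)\delta$, discards the tail by a Chernoff--Hoeffding bound, and then takes $\delta=\log(k+1)/\sqrt{k+1}$, which is what produces the $\log(k+1)+13/12$ in the statement. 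You instead recognize the discrepancy exactly as a total variation distance between a binomial profile-count and its size-biased version, reduce it to the mean absolute deviation $\frac{2^{\#\Sigma}}{2(k+1)}\BE\big|N_{\epsilon_0}-(k+1)2^{-\#\Sigma}\big|$, and apply Cauchy--Schwarz. This is cleaner and strictly stronger: you obtain $2^{\#\Sigma/2-1}/\sqrt{k+1}$, with no logarithm and a smaller power of $2^{\#\Sigma}$, which of course implies the stated bound. (Your closing guess that the $13/12$ reflects a Stirling correction is not how the paper arrives at it --- it is just what makes the Chernoff estimate valid down to $k=1$ --- but that is immaterial.) The only point that must be written out carefully is the Step 2 identity $\mu(\Omega_0\mid\mathbf N,\,z_0^{(\cdot)}=\epsilon)=\mu(\Omega_0\mid\mathbf N)$ for $N_\epsilon\geq 1$; the fibre-counting you sketch (each point of an orbit is hit by exactly $|\mathrm{Stab}|$ permutations, and exactly $N_\epsilon\cdot k!$ of the $(k+1)!$ permutations place profile $\epsilon$ at index $0$) does close this, so there is no gap.
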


\begin{proof}
The map $\psi_k$ can be given by the map
$\widetilde\psi_k:\Omega_{k,\Sigma'}\to\Omega_{k+1,\Sigma}$
which is
$$
\big((a_{ij})_{1\leq i<j\leq k},
(z_i^{(p)})_{1\leq i\leq k,p\in\Sigma'}\big)\mapsto\big((b_{ij})_{1\leq i<j\leq k+1},
(w_i^{(p)})_{1\leq i\leq k+1,p\in\Sigma}\big),
$$
where
$$
b_{ij}=\begin{cases}
a_{ij},&\text{if }1\leq i<j\leq k, \\
z_i^{(q)},&\text{if }1\leq i\leq k\text{ and }j=k+1,
\end{cases}
\qquad\text{and}\qquad
w_i^{(p)}=\begin{cases}
z_i^{(p)},&\text{if }1\leq i\leq k, \\
\lrdd{p}{q},&\text{if }i=k+1.
\end{cases}
$$
It's clear that this map is injective, and whose image is the set
of $\big((b_{ij}),(w_i^{(p)})\big)$ such that
$w_{k+1}^{(p)}=\lrdd{p}{q}$ for all $p\in\Sigma$.
By this way we view $\Omega_{k,\Sigma'}$ as a subset of $\Omega_{k+1,\Sigma}$,
and $\psi_k^{-1}(\Omega_0)=\Omega_0\cap\Omega_{k,\Sigma'}$.

Write $\Omega_{k+1,\Sigma}=\bigsqcup_{s=0}^{k+1}\Omega_{k+1,\Sigma,s}$
where $\Omega_{k+1,\Sigma,s}$ consists of elements
$\big((b_{ij}),(w_i^{(p)})\big)$ of $\Omega_{k+1,\Sigma}$ such that
$\#\{1\leq i\leq k+1\mid w_i^{(p)}=\lrdd{p}{q}\text{ for all }p\in\Sigma\}=s$.
Then $\Omega_{k+1,\Sigma,s}$ is also a union of $S_{k+1}$-orbits, and
$$
\#\Omega_{k+1,\Sigma,s}=2^{k(k+1)/2}\binom{k+1}{s}(2^{\#\Sigma}-1)^{k+1-s}.
$$
It's known that (see for example \cite{MU17}, Theorem 4.12)
for $N\in\BZ_{\geq 1}$, $0<p<1$ and $\delta>0$, we have
$$
\sum_{|n-Np|\geq N\delta}\binom{N}{n}p^n(1-p)^{N-n}\leq
2\exp(-2N\delta^2),
$$
hence
$$
R:=\sum_{|s-(k+1)/2^{\#\Sigma}|\geq(k+1)\delta}
\frac{\#\Omega_{k+1,\Sigma,s}}{\#\Omega_{k+1,\Sigma}}\leq
2\exp(-2(k+1)\delta^2).
$$

On the other hand,
if $\omega$ is an element of $\Omega_{k+1,\Sigma,s}$,
consider the $S_{k+1}$-orbit $[\omega]$, it's easy to see that
$$
\frac{\#([\omega]\cap\Omega_{k,\Sigma'})}{\#[\omega]}
=\frac{s}{k+1}.
$$
Therefore, if we
write $\Omega_{0,s}:=\Omega_0\cap\Omega_{k+1,\Sigma,s}$, then we have
$$
\#(\Omega_0\cap\Omega_{k,\Sigma'})=\sum_{s=0}^{k+1}\#(\Omega_{0,s}\cap\Omega_{k,\Sigma'})
=\sum_{s=0}^{k-1}\frac{s}{k+1}\#\Omega_{0,s}.
$$
Hence
\begin{align*}
\#(\Omega_0\cap\Omega_{k,\Sigma'})
&\leq
\sum_{|s-(k+1)/2^{\#\Sigma}|<(k+1)\delta}
\frac{s}{k+1}\#\Omega_{0,s}
+R\cdot\#\Omega_{k+1,\Sigma} \\
&\leq(2^{-\#\Sigma}+\delta)\cdot\#\Omega_0
+2\exp(-2(k+1)\delta^2)\cdot\#\Omega_{k+1,\Sigma}
\end{align*}
as well as
\begin{align*}
\#(\Omega_0\cap\Omega_{k,\Sigma'})
&\geq
\sum_{|s-(k+1)/2^{\#\Sigma}|<(k+1)\delta}
\frac{s}{k+1}\#\Omega_{0,s}
\geq(2^{-\#\Sigma}-\delta)\cdot(\#\Omega_0-R\cdot\#\Omega_{k+1,\Sigma}) \\
&\geq(2^{-\#\Sigma}-\delta)\cdot\#\Omega_0
-2\exp(-2(k+1)\delta^2)\cdot\#\Omega_{k+1,\Sigma},
\end{align*}
therefore
\begin{align*}
\left|\frac{\#\psi_k^{-1}(\Omega_0)}{\#\Omega_{k,\Sigma'}}
-\frac{\#\Omega_0}{\#\Omega_{k+1,\Sigma}}\right|
&=\left|
\frac{\#(\Omega_0\cap\Omega_{k,\Sigma'})-2^{-\#\Sigma}\cdot\#\Omega_0}{\#\Omega_{k,\Sigma'}}
\right|
\leq\frac{\delta\cdot\#\Omega_0+2\exp(-2(k+1)\delta^2)\cdot\#\Omega_{k+1,\Sigma}}
{\#\Omega_{k,\Sigma'}} \\
&\leq
2^{\#\Sigma}\big(\delta+2\exp(-2(k+1)\delta^2)\big).
\end{align*}
Taking $\delta=\log(k+1)/\sqrt{k+1}$ it's easy to obtain the
desired result.
\end{proof}

\subsubsection{}

The $n\equiv 7\mod 24$ case is more complicated,
since now the condition for $E^{(n)}$
and $E^{(-n)}$ are not the same,
and the combinations of genus numbers $g(n)$ are involved.

First we need a lemma which gives an equivalent condition
of that in Theorem \ref{descent main}.

We introduce some notations. Let $[a\lddot b]$ be the set
$\{a,a+1,\cdots,b\}$, $[k]$ be $[1\lddot k]$,
and if $M=(a_{ij})$ is a matrix,
$I$ and $J$ are subsets of integers, the
$M[I,J]$ is defined to be the submatrix $(a_{ij})_{i\in I,j\in J}$ of $M$.
Define $M_\Rows=(a_{ij}')$ where $a_{ij}'=a_{ij}$ if $i\neq j$,
and $a_{ii}'=-\sum_{j\neq i}a_{ij}$.
For any positive integer $i$, let $e_i$ be the column vector with $i$-th entry $1$,
other entries $0$.

\begin{lem}
\label{linear algebra 7 equation}
Let $n=\ell_1\cdots\ell_k\equiv 7\mod 24$ be positive square-free.
Then we have
$$
\det\begin{pmatrix}
A+D_{-1} & z_{-3} \\
z_2^{\mathrm T} & 1
\end{pmatrix}
\equiv\det\begin{pmatrix}
z_{-1}z_{-2}^{\mathrm T}+z_2z_{-1}^{\mathrm T} & A^{\mathrm T}+D_{-1} \\
A+D_{-1} & D_{-3} \\
\end{pmatrix}\mod 2.
$$
\end{lem}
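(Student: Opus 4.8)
\emph{Setup.} The plan is to reduce everything to the matrix $B:=A+D_{-1}$ and a few of its structural relations. Writing $\mathbf 1$ for the all-ones vector, the relation $A+A^{\mathrm T}=z_{-1}z_{-1}^{\mathrm T}+D_{-1}$ recorded in \S\ref{s:matrices} gives $B+B^{\mathrm T}=z_{-1}z_{-1}^{\mathrm T}+D_{-1}$; the vanishing of the row sums of $A$ gives $B\mathbf 1=z_{-1}$, and (since $n\equiv 3\bmod 4$) the vanishing of the column sums, as in the proof of Lemma \ref{A+D-1}, gives $\mathbf 1^{\mathrm T}B=z_{-1}^{\mathrm T}$. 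Finally $n\equiv 1\bmod 3$ is equivalent to $\left(\frac{-3}{n}\right)=1$, i.e. $\mathbf 1^{\mathrm T}z_{-3}=0$ in $\BF_2$, and of course $z_{-2}=z_{-1}+z_2$. I denote the left- and right-hand determinants of the statement by $L$ and $R$; since $A^{\mathrm T}+D_{-1}=B^{\mathrm T}$, they read $L=\det\begin{pmatrix} B & z_{-3}\\ z_2^{\mathrm T} & 1\end{pmatrix}$ and $R=\det\begin{pmatrix} P & B^{\mathrm T}\\ B & D_{-3}\end{pmatrix}$ with $P:=z_{-1}z_{-2}^{\mathrm T}+z_2z_{-1}^{\mathrm T}$. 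Only $n\equiv 3\bmod 4$ and $n\equiv 1\bmod 3$ will be used.

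\emph{A polynomial identity.} Expanding $L$ along its last row and column gives the bordered-determinant identity $L=\det(B)+z_2^{\mathrm T}\operatorname{adj}(B)z_{-3}$, valid for all $B$. I propose to prove the \emph{stronger} polynomial identity $R=L^2$. For this I would promote the strictly-upper-triangular entries $a_{ij}$ $(i<j)$ of $A$ to indeterminates over $\BF_2$, keep the $\{0,1\}$-vectors $z_{-1},z_2,z_3$ fixed, and rebuild $A,B,P,D_{-3}$ by the same formulas. Because $z_j^2=z_j$ still holds, a direct check shows all relations of the previous paragraph persist over $\BF_2[a_{ij}]$; exhibiting a single specialization with $B$ invertible shows $\det B$ is not the zero polynomial, so $B$ becomes invertible over the fraction field $\BF_2(a_{ij})$, and it is enough to prove $R=L^2$ there.

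\emph{Core computation.} Over $\BF_2(a_{ij})$ I would swap the two block-columns of $R$ and take the Schur complement of the invertible block $B^{\mathrm T}$, getting $R=\det(B^{\mathrm T})\det(B-D_{-3}B^{-\mathrm T}P)$. Since $B^{\mathrm T}\mathbf 1=z_{-1}$ forces $B^{-\mathrm T}z_{-1}=\mathbf 1$, one finds $D_{-3}B^{-\mathrm T}P=z_{-3}z_{-2}^{\mathrm T}+w\,z_{-1}^{\mathrm T}$ with $w:=D_{-3}B^{-\mathrm T}z_2$. Factoring out $\det(B)$ and applying Sylvester's identity $\det(I_k-XY)=\det(I_2-YX)$ to this rank-two update leaves $\det(I_2-YX)$ with
\[
YX=\begin{pmatrix} z_{-2}^{\mathrm T}B^{-1}z_{-3} & z_{-2}^{\mathrm T}B^{-1}w\\ z_{-1}^{\mathrm T}B^{-1}z_{-3} & z_{-1}^{\mathrm T}B^{-1}w\end{pmatrix}.
\]
Here $z_{-1}^{\mathrm T}B^{-1}=\mathbf 1^{\mathrm T}$ together with $\mathbf 1^{\mathrm T}z_{-3}=0$ kills the lower-left entry, $z_{-1}^{\mathrm T}B^{-1}w=\mathbf 1^{\mathrm T}D_{-3}B^{-\mathrm T}z_2=z_2^{\mathrm T}B^{-1}z_{-3}$, and $z_{-2}^{\mathrm T}B^{-1}z_{-3}=z_2^{\mathrm T}B^{-1}z_{-3}$ (using $z_{-2}=z_{-1}+z_2$). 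Thus $I_2-YX$ is upper-triangular with equal diagonal entries $1+z_2^{\mathrm T}B^{-1}z_{-3}$, and since $\det(B^{\mathrm T})=\det(B)$ this gives
\[
R=\det(B)^2\big(1+z_2^{\mathrm T}B^{-1}z_{-3}\big)^2=\big(\det(B)+z_2^{\mathrm T}\operatorname{adj}(B)z_{-3}\big)^2=L^2 .
\]

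\emph{Conclusion and the main obstacle.} With $R=L^2$ established in $\BF_2[a_{ij}]$, I would specialize the $a_{ij}$ back to their arithmetic values; there $L\in\BF_2$, so $L^2=L$ and hence $R=L$. The crux — and the reason the statement is subtle — is precisely that at the generic level the two sides differ by a Frobenius square ($R=L^2\neq L$ as polynomials), and they agree only after specialization because squaring is the identity on $\BF_2$. This is why no purely elementary, size-reducing sequence of $\BF_2$-row/column operations can carry $R$ to $L$, and the argument must pass through the polynomial lift. The two points I expect to require real care are checking that the structural relations (especially $\mathbf 1^{\mathrm T}B=z_{-1}^{\mathrm T}$, where $n\equiv 3\bmod 4$ and $z_j^2=z_j$ enter) survive the lift, and verifying that $\det B$ is not the zero polynomial by producing one invertible specialization.
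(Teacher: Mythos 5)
Your argument is correct in all the steps you carry out, and it is a genuinely different route from the paper's. The paper proves the congruence by a direct Laplace expansion of the $2k\times 2k$ determinant along its bottom $k$ rows: it uses that $z_{-1}z_{-2}^{\mathrm T}+z_2z_{-1}^{\mathrm T}$ has rank at most $2$ to restrict to minors with $\#I=\#J\le 2$, cancels the cross terms $I\ne I'$ by a symmetry of the summation, and reassembles the surviving $\#I=0,1,2$ contributions into bordered determinants. That argument is purely combinatorial and stays at the level of the specialized $\BF_2$-matrices. You instead lift to the generic setting and prove the strictly stronger polynomial identity $R=L^2$ via a Schur complement of $B^{\mathrm T}=A^{\mathrm T}+D_{-1}$ and Sylvester's identity for the rank-two update, then conclude by Frobenius. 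I checked that the structural relations you need ($B\mathbf 1=z_{-1}$, $\mathbf 1^{\mathrm T}B=z_{-1}^{\mathrm T}$ using $\operatorname{sum}(z_{-1})=1$, and $\mathbf 1^{\mathrm T}z_{-3}=0$) do persist over $\BF_2[a_{ij}]$, and that the $2\times 2$ matrix $YX$ is exactly as you claim, so $I_2-YX$ is indeed upper triangular with repeated diagonal entry $1+z_2^{\mathrm T}B^{-1}z_{-3}$. Your route is shorter, explains structurally why the two sides can only agree after specialization (they differ by a Frobenius square generically), and uses slightly weaker hypotheses than the paper's remark following the lemma: you never need $\operatorname{sum}(z_2)=0$.

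The one step you must actually supply is the non-vanishing of $\det(A+D_{-1})$ as a polynomial in the indeterminates $(a_{ij})_{i<j}$ \emph{for the given vector} $z_{-1}$. You correctly flag this, but note that a single invertible specialization is required for each $S_k$-orbit of $z_{-1}$ (i.e.\ each odd weight $m$ of its support), not just for some $z_{-1}$; since the whole point of the polynomial lift is to cover the arithmetically occurring $\omega$ with $B$ singular, this cannot be waved away. It is true: for $m=k$ one can take $B$ to be the circulant $\sum_{t=1}^{(k-1)/2}C^t+\epsilon I$ with $C$ the cyclic shift and $\epsilon=1+\frac{k-1}{2}\bmod 2$, whose symbol $p(x)$ is coprime to $x^k-1$ because $p\mid x^{(k\pm 1)/2}-1$, $\gcd\bigl(\tfrac{k\pm1}{2},k\bigr)=1$ and $p(1)=1$; for $m<k$ one needs a slightly more careful block construction (or an arithmetic realization: build $n=\ell_1\cdots\ell_k$ prime by prime via Dirichlet, keeping the $4$-rank of $\Cl(\BQ(\sqrt{-n}))$ equal to $0$ at each step, which is possible for a positive proportion of choices of the next prime). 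Either way this is a finite, checkable fact, but it is the one genuine gap in the write-up as it stands.
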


\begin{proof}
For the simplicity of notation, let $B=z_{-1}z_{-2}^{\mathrm T}+z_2z_{-1}^{\mathrm T}$.
In the right hand side,
by Laplace expansion on rows $k+1$ to $2k$,
we have
$$
\RHS
=\sum_{\substack{I\subset[k]\\
J\subset[k]\\
\#I=\#J}}\det\begin{pmatrix}
(A+D_{-1})\big[[k],[k]\setminus I\big] &
D_{-3}\big[[k],J\big]
\end{pmatrix}\det\begin{pmatrix}
B\big[[k],I\big] &
(A^{\mathrm T}+D_{-1})\big[[k],[k]\setminus J\big]
\end{pmatrix}.
$$
Note that $B$ is of rank at most $2$,
hence the non-zero terms in the above summation should
satisfy $\#I=\#J\leq 2$ and $J\subset\supp(z_{-3})=\{i\mid \ell_i\equiv 2\mod 3\}$,
and in this case, we have
$$
\det\begin{pmatrix}
(A+D_{-1})\big[[k],[k]\setminus I\big] &
D_{-3}\big[[k],J\big]
\end{pmatrix}
=\det\left((A+D_{-1})\big[[k]\setminus J,[k]\setminus I\big]
\right)
$$
and
$$
\det\begin{pmatrix}
B\big[[k],I\big] &
(A^{\mathrm T}+D_{-1})\big[[k],[k]\setminus J\big]
\end{pmatrix}
=\sum_{\substack{I'\subset[k]\\
\#I'=\#I}}
\det\left(B[I',I]\right)
\det\left((A+D_{-1})\big[[k]\setminus J,[k]\setminus I'\big]
\right).
$$
If $I'\neq I$, then we may interchange $I$ and $I'$ in the summation
of $\RHS$,
and they sum to zero
(note that $B=z_{-1}z_2^{\mathrm T}+z_2z_{-1}^{\mathrm T}+z_{-1}z_{-1}^{\mathrm T}$
is a symmetric matrix). Hence we have
$$
\RHS
=\sum_{\substack{I\subset[k]\\
J\subset\supp(z_{-3})\\
\#I=\#J\leq 2}}\det\left(B[I,I]\right)
\det\left((A+D_{-1})\big[[k]\setminus J,[k]\setminus I\big]
\right).
$$
Note that the
$j$-th column of $B$ equals
$$
\begin{cases}
0,&\text{if }\ell_j\equiv 1\mod 8, \\
z_2,&\text{if }\ell_j\equiv 3\mod 8, \\
z_{-1},&\text{if }\ell_j\equiv 5\mod 8, \\
z_{-2},&\text{if }\ell_j\equiv 7\mod 8,
\end{cases}
$$
hence $\det\left(B[I,I]\right)\neq 0$ if and only if $I=\emptyset$,
or $I=\{i\}$ such that $\ell_i\equiv 3\mod 4$,
or $I=\{i_1,i_2\}$ such that $1\not\equiv\ell_{i_1}\not\equiv\ell_{i_2}
\not\equiv 1\mod 8$.
There is only one term in $\RHS$
with $\#I=0$, namely
$$
\det(A+D_{-1})
=\det\begin{pmatrix}
A+D_{-1} & 0 \\
z_2^{\mathrm T} & 1 \\
\end{pmatrix}.
$$
The terms in $\RHS$
with $\#I=1$ sum to
$$
\sum_{\substack{(i,j)\\
\ell_j\equiv 2\mod 3\\
\ell_i\equiv 3\mod 4}}
\det\left((A+D_{-1})\big[[k]\setminus\{j\},[k]\setminus\{i\}\big]
\right)
=\det\begin{pmatrix}
A+D_{-1} & z_{-3} \\
z_{-1}^{\mathrm T} & 0
\end{pmatrix}=0,
$$
note that the rows of $(A+D_{-1},z_{-3})$ sum to $(z_{-1}^{\mathrm T},0)$.
When $\#I=2$, for each $J=\{j_1,j_2\}$, we have
\begin{multline*}
\sum_{\substack{I=\{i_1,i_2\}\\
1\not\equiv\ell_{i_1}\not\equiv\ell_{i_2}
\not\equiv 1\mod 8}}
\det\left((A+D_{-1})\big[[k]\setminus J,[k]\setminus I\big]
\right)
=\det\begin{pmatrix}
A+D_{-1} & e_{j_1} & e_{j_2} \\
z_{-1}^{\mathrm T} & 0 & 0 \\
z_2^{\mathrm T} & 0 & 0 \\
\end{pmatrix} \\
=\det\begin{pmatrix}
A+D_{-1} & e_{j_1} & e_{j_2} \\
0 & 1 & 1 \\
z_2^{\mathrm T} & 0 & 0 \\
\end{pmatrix}
=\det\begin{pmatrix}
A+D_{-1} & e_{j_1}+e_{j_2} \\
z_2^{\mathrm T} & 0 \\
\end{pmatrix},
\end{multline*}
therefore, note that $\supp(z_{-3})$ has even number of elements,
the terms in $\RHS$
with $\#I=2$ sum to
$$
\det\begin{pmatrix}
A+D_{-1} & z_{-3} \\
z_2^{\mathrm T} & 0 \\
\end{pmatrix}.
$$
The desired result follows.
\end{proof}

The above lemma is also valid for $\omega\in\Omega_{k,\{-1,2,3\}}$
such that $\operatorname{sum}(z_{-1})=1$,
$\operatorname{sum}(z_2)=\operatorname{sum}(z_{-3})=0$.

\begin{thm}
\label{positive density 7}
Among the set of positive square-free integers $n\equiv 7\mod 24$,
the limit density of $n$ such that
\begin{itemize}
\item
$\Sel_2(E^{(n)}/\BQ)\big/ E^{(n)}[2]=0$,
\item
$\Sel_2(E^{(-n)}/\BQ)\big/ E^{(-n)}[2]=0$,
\item
both of $\Sel_2(E^{(\pm n)}/\BQ)\big/ E^{(\pm n)}[2]$ are zero,
\end{itemize}
are equal to $\delta_{\infty,0}$,
$\delta_{\infty,0}$, and
$\frac12\delta_{\infty,0}
=0.144394\cdots$,
respectively.
\end{thm}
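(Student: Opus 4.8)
The plan is to reduce all three events to conditions on the $\BF_2$-matrices of Gerth's framework with $\Sigma=\{-1,2,3\}$ (which applies directly here, since $n\equiv 7\bmod 24$ forces $\gcd(n,6)=1$, so $n$ is coprime to $\Sigma$), and then to carry out a short $\BF_2$-linear-algebra analysis of a bordered matrix together with the asymptotics $\delta_{k,m}\to\delta_{\infty,m}$.

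First I would record the matrix descriptions. Writing $M:=A+D_{-1}$, Proposition \ref{descent main}(i) gives that $\Sel_2(E^{(n)}/\BQ)/E^{(n)}[2]=0$ if and only if $\corank(M)=0$. For the minus twist, Proposition \ref{descent main}(ii) combined with Lemma \ref{linear algebra 7 equation} gives that $\Sel_2(E^{(-n)}/\BQ)/E^{(-n)}[2]=0$ if and only if the bordered matrix $\begin{pmatrix} M & z_{-3} \\ z_2^{\mathrm T} & 1\end{pmatrix}$ is nonsingular over $\BF_2$. Next I would observe that, for $n$ coprime to $6$, the condition $n\equiv 7\bmod 24$ translates into the $S_k$-invariant constraints $\operatorname{sum}(z_{-1})=1$, $\operatorname{sum}(z_2)=0$ and $\operatorname{sum}(z_3)=1$ (equivalently $\operatorname{sum}(z_{-3})=0$). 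By Gerth's theorem the three limit densities equal $\lim_{k\to\infty}$ of the corresponding conditional ratios over $\Omega_{k,\{-1,2,3\}}$, and the remark following Lemma \ref{linear algebra 7 equation} lets me use the determinant identity for every such $\omega$, not merely for actual integers $n$.

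I then set up the probabilistic model. Under the counting measure the entries $a_{ij}$ ($i<j$) and the vectors $z_{-1},z_2,z_3$ are independent uniform subject only to the three sum constraints. Put $v:=z_{-3}=z_{-1}+z_3$ and $w:=z_2$. Although $v$ involves $z_{-1}$, its conditional law given $M$ is uniform on the sum-zero hyperplane $\mathbf 1^{\perp}$, independent of the value of $z_{-1}$; hence, conditional on $M$, the pair $(v,w)$ is independent and uniform on $\mathbf 1^{\perp}$. I also record the identities $M\mathbf 1=z_{-1}$ and $\mathbf 1^{\mathrm T}M=z_{-1}^{\mathrm T}$ (the row and column sums of $A$ vanish because $n\equiv 3\bmod 4$), so that $z_{-1}\neq 0$ forces $\mathbf 1\notin\ker M$ and $\mathbf 1\notin\ker M^{\mathrm T}$. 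The core step, which I expect to be the main obstacle, is the invertibility analysis of $\begin{pmatrix} M & v\\ w^{\mathrm T}&1\end{pmatrix}$. Its rank is at most $\rank(M)+2$, so it can be nonsingular only when $\corank(M)\le 1$. If $\corank(M)=0$, the Schur complement shows nonsingularity is equivalent to $w^{\mathrm T}M^{-1}v=0$; since $M\mathbf 1=z_{-1}\notin\mathbf 1^{\perp}$ while $v\in\mathbf 1^{\perp}$, the vector $M^{-1}v$ avoids $\langle\mathbf 1\rangle$ for $v\neq 0$, so $w\mapsto w^{\mathrm T}M^{-1}v$ is a nonzero functional on $\mathbf 1^{\perp}$ and the conditional probability is $\tfrac12+O(2^{-k})$. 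If $\corank(M)=1$, with $\ker M=\langle r\rangle$ and $\ker M^{\mathrm T}=\langle l\rangle$, a direct kernel computation shows the bordered matrix is nonsingular exactly when $w^{\mathrm T}r=1$ and $l^{\mathrm T}v=1$; as $r,l\notin\langle\mathbf 1\rangle$ these are two independent nonzero-functional events, each of probability $\tfrac12$, giving conditional probability $\tfrac14$. The case $\corank(M)\ge 2$ always fails.

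Finally I would assemble the densities using $\BP(\corank M=m)=\delta_{k,m}\to\delta_{\infty,m}$ together with $\delta_{\infty,1}=2\delta_{\infty,0}$. The first event $\{\corank M=0\}$ has density $\to\delta_{\infty,0}$. The minus-twist event has density
$$
\delta_{\infty,0}\cdot\tfrac12+\delta_{\infty,1}\cdot\tfrac14=\tfrac12\delta_{\infty,0}+\tfrac12\delta_{\infty,0}=\delta_{\infty,0}.
$$
The joint event equals $\{\corank M=0\}\cap\{\text{bordered nonsingular}\}$, of density $\to\delta_{\infty,0}\cdot\tfrac12=\tfrac12\delta_{\infty,0}$. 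The $O(2^{-k})$ corrections and the higher-corank terms vanish in the limit, yielding the three claimed values.
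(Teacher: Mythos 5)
Your proof is correct and follows essentially the same route as the paper: both reduce the three events, via Proposition \ref{descent main} and Lemma \ref{linear algebra 7 equation}, to the nonsingularity over $\BF_2$ of $A+D_{-1}$ and of the bordered matrix $\left(\begin{smallmatrix}A+D_{-1}&z_{-3}\\ z_2^{\mathrm T}&1\end{smallmatrix}\right)$ in Gerth's probability space, and both arrive at the conditional probabilities $\tfrac12$ and $\tfrac14$ on the events $\corank(A+D_{-1})=0$ and $1$. The only (harmless) divergence is that you compute the joint density directly as $\delta_{k,0}\cdot(\tfrac12+O(2^{-k}))$, whereas the paper runs an inclusion-exclusion through the auxiliary determinant $\det\left(\begin{smallmatrix}A+D_{-1}&z_{-3}\\ z_2^{\mathrm T}&0\end{smallmatrix}\right)$; your kernel analysis in the corank-one case is equivalent to the paper's computation of $Q_1$, $Q_2$, $Q_3$.
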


\begin{proof}
By Theorem \ref{analytic sha main}
and Lemma \ref{linear algebra 7 equation}, the first two
are equivalent to that
$$
\det(A+D_{-1})=1,
\qquad\text{resp.}\qquad
\det\begin{pmatrix}
A+D_{-1} & z_{-3} \\
z_2^{\mathrm T} & 1
\end{pmatrix}=1.
$$
Fix $k\geq 1$, let $\Omega$
be the subset of $\Omega_{k,\{-1,2,3\}}$
such that $\operatorname{sum}(z_{-1})=1$,
$\operatorname{sum}(z_2)=\operatorname{sum}(z_{-3})=0$.
Let $\delta_k^{(1)}$ and $\delta_k^{(2)}$ be the density
of $\omega\in\Omega$ such that the first and the second
of above equation holds, respectively.
Let $\delta_k^{(3)}$ be the density
of $\omega\in\Omega$ such that
$$
\det(A+D_{-1})+\det\begin{pmatrix}
A+D_{-1} & z_{-3} \\
z_2^{\mathrm T} & 1
\end{pmatrix}
=\det\begin{pmatrix}
A+D_{-1} & z_{-3} \\
z_2^{\mathrm T} & 0
\end{pmatrix}
\quad\text{equals}\quad 1.
$$
Let $\delta^{(i)}:=\lim_{k\to\infty}\delta_k^{(i)}$ for $i=1,2,3$.
Then the desired limit density equals
$\delta^{(1)}$, $\delta^{(2)}$,
and $(\delta^{(1)}+\delta^{(2)}-\delta^{(3)})/2$, respectively.

By the same argument in Theorem \ref{positive density 3},
we have $\delta_k^{(1)}=\delta_{k,0}$ and hence $\delta^{(1)}=\delta_{\infty,0}$.
For $\delta_k^{(2)}$ and $\delta_k^{(3)}$, it's easy to see that
$$
\delta_k^{(2)}=\delta_{k,0}Q_{k,0}^{(2)}+\delta_{k,1}Q_{k,1}^{(2)}
\quad\text{and}\quad
\delta_k^{(3)}=\delta_{k,0}Q_{k,0}^{(3)}+\delta_{k,1}Q_{k,1}^{(3)},
$$
where for $j=0,1$, define
\begin{align*}
Q_{k,j}^{(2)}&:=\BP\left(
\det\begin{pmatrix}
A+D_{-1} & z_{-3} \\
z_2^{\mathrm T} & 1
\end{pmatrix}\neq 0
~\middle|
\begin{array}{l}
A,D_{-1}\text{ such that }\corank(A+D_{-1})=j, \\
z_2,z_{-3}\in\BF_2^k\text{ such that }
\operatorname{sum}(z_2)=\operatorname{sum}(z_{-3})=0
\end{array}
\right), \\
Q_{k,j}^{(3)}&:=\BP\left(
\det\begin{pmatrix}
A+D_{-1} & z_{-3} \\
z_2^{\mathrm T} & 0
\end{pmatrix}\neq 0
~\middle|
\begin{array}{l}
A,D_{-1}\text{ such that }\corank(A+D_{-1})=j, \\
z_2,z_{-3}\in\BF_2^k\text{ such that }
\operatorname{sum}(z_2)=\operatorname{sum}(z_{-3})=0
\end{array}
\right).
\end{align*}

For $Q_{k,0}^{(2)}$ and $Q_{k,0}^{(3)}$, it's easy to see that under the
assumption that $A+D_{-1}$ is invertible,
\begin{align*}
Q_{k,0}^{(2)}&=\BP\left(
z_2^{\mathrm T}(A+D_{-1})^{-1}z_{-3}=0
~\middle|
\begin{array}{l}
A,D_{-1}\text{ such that }\corank(A+D_{-1})=0, \\
z_2,z_{-3}\in\BF_2^k\text{ such that }
\operatorname{sum}(z_2)=\operatorname{sum}(z_{-3})=0
\end{array}
\right), \\
Q_{k,0}^{(3)}&=\BP\left(
z_2^{\mathrm T}(A+D_{-1})^{-1}z_{-3}=1
~\middle|
\begin{array}{l}
A,D_{-1}\text{ such that }\corank(A+D_{-1})=0, \\
z_2,z_{-3}\in\BF_2^k\text{ such that }
\operatorname{sum}(z_2)=\operatorname{sum}(z_{-3})=0
\end{array}
\right)
=1-Q_{k,0}^{(2)}.
\end{align*}
Consider $Q_{k,0}^{(2)}$.
Since the $z_2$ should satisfy $\operatorname{sum}(z_2)=0$,
i.e.~$z_2^{\mathrm T}(1,\cdots,1)^{\mathrm T}=0$,
for each $z_{-3}$, the number of choices of $z_2$
such that $z_2^{\mathrm T}(A+D_{-1})^{-1}z_{-3}=0$
equals $2^{k-1}$ if $(A+D_{-1})^{-1}z_{-3}=0$
or $(A+D_{-1})^{-1}z_{-3}=(1,\cdots,1)^{\mathrm T}$,
and equals $2^{k-2}$ in other cases.
Note that $(A+D_{-1})^{-1}z_{-3}$ cannot be $(1,\cdots,1)^{\mathrm T}$,
equivalently, $z_{-3}$ cannot be $(A+D_{-1})(1,\cdots,1)^{\mathrm T}$,
since all the columns of $A+D_{-1}$ sum to $z_{-1}$,
which satisfies $\operatorname{sum}(z_{-1})=1$,
while $\operatorname{sum}(z_{-3})=0$.
Therefore
$$
Q_{k,0}^{(2)}=\frac{2^{k-2}(2^{k-1}-1)+2^{k-1}}{2^{2(k-1)}}
=\frac12+\frac{1}{2^k}\to\frac12
\quad\text{as}\quad k\to\infty,
$$
and $Q_{k,0}^{(3)}=1-Q_{k,0}^{(2)}=\frac12-\frac{1}{2^k}\to\frac12$
as $k\to\infty$.

For $Q_{k,1}^{(2)}$ and $Q_{k,1}^{(3)}$, under the
assumption that $A+D_{-1}$ is of rank $k-1$,
we have $Q_{k,1}^{(2)}=Q_1Q_2$ and $Q_{k,1}^{(3)}=Q_1Q_3$,
where
\begin{align*}
Q_1&:=\BP\left(
\rank(A+D_{-1},z_{-3})=k
~\middle|
\begin{array}{l}
A,D_{-1}\text{ such that }\corank(A+D_{-1})=1, \\
z_{-3}\in\BF_2^k\text{ such that }\operatorname{sum}(z_{-3})=0
\end{array}
\right), \\
Q_2&:=\BP\left(
\rank\begin{pmatrix}
A+D_{-1} & z_{-3} \\
z_2^{\mathrm T} & 1
\end{pmatrix}=k+1
~\middle|
\begin{array}{l}
A,D_{-1}\text{ such that }\corank(A+D_{-1})=1, \\
z_2,z_{-3}\in\BF_2^k\text{ such that }
\operatorname{sum}(z_2)=\operatorname{sum}(z_{-3})=0 \\
\text{and }\rank(A+D_{-1},z_{-3})=k
\end{array}
\right), \\
Q_3&:=\BP\left(
\rank\begin{pmatrix}
A+D_{-1} & z_{-3} \\
z_2^{\mathrm T} & 0
\end{pmatrix}=k+1
~\middle|
\begin{array}{l}
A,D_{-1}\text{ such that }\corank(A+D_{-1})=1, \\
z_2,z_{-3}\in\BF_2^k\text{ such that }
\operatorname{sum}(z_2)=\operatorname{sum}(z_{-3})=0 \\
\text{and }\rank(A+D_{-1},z_{-3})=k
\end{array}
\right).
\end{align*}
Let $V_1\subset\BF_2^k$ be the subspace
of dimension $k-1$ generated by the columns of $A+D_{-1}$,
and $V_2\subset\BF_2^k$ be the subspace
of dimension $k-1$ consisting of $z$ such that $\operatorname{sum}(z)=0$.
Since $z_{-1}\in V_1$ but $z_{-1}\notin V_2$,
the $V_1\cap V_2$ is of dimension $k-2$, and hence we obtain
$$
Q_1=\BP\left(
z_{-3}\in V_2\setminus V_1
~\middle|
\begin{array}{l}
A,D_{-1}\text{ such that }\corank(A+D_{-1})=1, \\
z_{-3}\in\BF_2^k\text{ such that }\operatorname{sum}(z_{-3})=0
\end{array}
\right)=\frac12.
$$
Similarly, let $V_3\subset\BF_2^{k+1}$ be the subspace
of dimension $k$ generated by the rows of $(A+D_{-1},z_{-3})$,
let $V_4\subset\BF_2^{k+1}$ be $V_2\oplus\BF_2$,
and $V_4^0\subset V_4$ be $V_2\oplus 0$,
then $(z_{-1}^{\mathrm T},0)\in V_3\setminus V_4$,
so the $V_3\cap V_4$ and $V_3\cap V_4^0$ are of dimensions $k-1$
and $k-2$, respectively.
Now it's easy to see that
\begin{align*}
Q_2&=\BP\left(
(z_2^{\mathrm T},1)\in(V_4\setminus V_3)
\setminus(V_4^0\setminus V_3)
~\middle|
\begin{array}{l}
A,D_{-1}\text{ such that }\corank(A+D_{-1})=1, \\
z_2,z_{-3}\in\BF_2^k\text{ such that }
\operatorname{sum}(z_2)=\operatorname{sum}(z_{-3})=0 \\
\text{and }\rank(A+D_{-1},z_{-3})=k
\end{array}
\right)=\frac12, \\
Q_3&=\BP\left(
(z_2^{\mathrm T},0)\in V_4^0\setminus V_3
~\middle|
\begin{array}{l}
A,D_{-1}\text{ such that }\corank(A+D_{-1})=1, \\
z_2,z_{-3}\in\BF_2^k\text{ such that }
\operatorname{sum}(z_2)=\operatorname{sum}(z_{-3})=0 \\
\text{and }\rank(A+D_{-1},z_{-3})=k
\end{array}
\right)=\frac12.
\end{align*}
Therefore $Q_{k,1}^{(2)}=Q_{k,1}^{(3)}=1/4$.

The above calculations yields that
$\delta^{(2)}=\delta^{(3)}=\delta_{\infty,0}$,
which gives the desired limit density.
\end{proof}

\begin{remark}
\label{comparison to CNP}
Heath-Brown \cite{HB2} and Swinnerton-Dyer \cite{SD}
proved that, if $E/\BQ$ is an elliptic curve satisfying
$E[2]\subset E(\BQ)$ and $E$ has no cyclic subgroup of order $4$ defined over $\BQ$
(for example $E:y^2=x^3-x$ the congruent number curve),
then for any fixed $S$-equivalence class $\fX$,
the minimum value of the dimension $s(n)$ of the $2$-Selmer group
of $E^{(n)}$ modulo torsion as $n\in\fX$, denoted by $s_{\min}$,
is equal to $0$ or $1$, Moreover, for any $d\in \BZ_{\geq 0}$, the limit density
$$
\limProb\big(s(n)=d\mid n\in\fX\big)
=\begin{cases}
\displaystyle
2\prod_{j=0}^\infty(1+2^{-j})^{-1}\prod_{i=1}^d \frac{2}{2^i-1},&\text{if }
d\equiv s_{\min}\mod 2, \\
0,&\text{if }d\not\equiv s_{\min}\mod 2.
\end{cases}
$$
In particular, if $\fX$ is such that $s_{\min}=0$, then the limit density
$$
\limProb\big(s(n)=0\mid n\in\fX\big)
=2\prod_{j=0}^\infty(1+2^{-j})^{-1}=0.419422\cdots.
$$

Our study reveals some new phenomena on the distribution of
$2$-Selmer groups in the tiling number family
$\pm ny^2=x(x-1)(x+3)$, which does not satisfy the above condition
since it has rational cyclic subgroup of order $4$.
Firstly, there exists $\fX$ such that $s_{\min}\geq 2$
(Proposition \ref{descent non-trivial example}).
Secondly, even if $s_{\min}=0$,
the limit density of the $2$-Selmer modulo torsion being trivial
is different from the above result.
In fact, for all $S$-equivalence classes $\fX$ of $E$
of sign $+1$ (see Lemma \ref{sign}), by the similar method in this section,
we can determine the limit density among $n\in\fX$
such that $s(n)=0$:
$$
\limProb\big(s(n)=0\mid n\in\fX\big)=\begin{cases}
\delta_{\infty,0},
&\text{if }\fX=[n]\text{ with }n\equiv 2,3,5,7,14,15,19\mod 24 \\
&\text{or }\fX=[-n]\text{ with }n\equiv 2,3,6,7,11,14,18\mod 24, \\
\frac{4}{3}\delta_{\infty,0},
&\text{if }\fX=[n]\text{ with }n\equiv 1,9\mod 24, \\
0,&\text{if }\fX=[-n]\text{ with }n\equiv 1\mod 12.
\end{cases}
$$
In particular, the limit density among all square-free integers $n$
of sign $+1$
such that $s(n)=0$
is equal to $\frac{131}{144}\delta_{\infty,0}=0.262716\cdots$.
\end{remark}

\section{Comparison of Selmer and $L$-value}
\label{s:linear-algebra}

In this section we are going to prove

\begin{thm}
\label{comparison 7}
Let $n\equiv 7\mod 24$ be a square-free positive integer.
Then the followings are equivalent:
\begin{itemize}
\item[(a)]
The genus invariant
$$
g(n)+\sum_{d\mid n,d\equiv 11\mod 24}g(n/d)g(d)
$$
is odd.
\item[(b)]
$\Sel_2(E^{(-n)}/\BQ)/E^{(-n)}[2]=0$.
\end{itemize}
\end{thm}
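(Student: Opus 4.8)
The plan is to turn each of (a) and (b) into a condition on the single $\BF_2$-determinant $\det\left(\begin{smallmatrix} A+D_{-1} & z_{-3}\\ z_2^{\mathrm T} & 1\end{smallmatrix}\right)$, and then to prove that this determinant computes the genus invariant appearing in (a). For (b): by Proposition \ref{descent main}(ii) the vanishing $\Sel_2(E^{(-n)}/\BQ)/E^{(-n)}[2]=0$ is equivalent to the oddness of the determinant displayed there, which by Lemma \ref{linear algebra 7 equation} is congruent mod $2$ to $\det\left(\begin{smallmatrix} A+D_{-1} & z_{-3}\\ z_2^{\mathrm T} & 1\end{smallmatrix}\right)$. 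Writing $M:=A+D_{-1}$, condition (b) is therefore equivalent to
$$\det\begin{pmatrix} M & z_{-3} \\ z_2^{\mathrm T} & 1\end{pmatrix}=1\quad\text{in }\BF_2.$$

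Next I would reduce this determinant to a sum of minors. Expanding the corner entry gives $\det\left(\begin{smallmatrix} M & z_{-3}\\ z_2^{\mathrm T} & 1\end{smallmatrix}\right)=\det M+\det\left(\begin{smallmatrix} M & z_{-3}\\ z_2^{\mathrm T} & 0\end{smallmatrix}\right)$, and a double Laplace expansion of the bordered determinant along its last column and last row shows that, over $\BF_2$,
$$\det\begin{pmatrix} M & z_{-3} \\ z_2^{\mathrm T} & 0\end{pmatrix}=\sum_{\substack{i\in\supp(z_{-3})\\ j\in\supp(z_2)}}\det\bigl(M[[k]\setminus\{i\},[k]\setminus\{j\}]\bigr).$$
Since $n\equiv 3\pmod 4$, the Remark after Lemma \ref{A+D-1} gives $\det M\equiv g(n)\pmod 2$. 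As (a) is literally the oddness of $g(n)+\sum_{d\mid n,\ d\equiv 11\,(24)}g(n/d)g(d)$ (which by Theorem \ref{analytic sha main} equals $\CL(-n)$ mod $2$, and in which $n\equiv 7\pmod{24}$ forces $d\neq 1,n$ and $n/d\equiv 5\pmod{24}$), the theorem reduces to the purely combinatorial identity
$$\sum_{\substack{i\in\supp(z_{-3})\\ j\in\supp(z_2)}}\det\bigl(M[[k]\setminus\{i\},[k]\setminus\{j\}]\bigr)\ \equiv\ \sum_{\substack{d\mid n\\ d\equiv 11\,(24)}}g(n/d)\,g(d)\pmod 2.$$

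To prove this I would pass to R\'edei-determinant descriptions of the genus numbers. For $d\equiv 11\pmod{24}$ (so $d\equiv 3\pmod 4$) one has $g(d)\equiv\det(A(d)+D_{-1}(d))$ exactly as in Lemma \ref{A+D-1}, while for $e:=n/d\equiv 5\pmod{24}$ (so $e\equiv 1\pmod 4$) the prime $2$ is ramified in $\BQ(\sqrt{-e})$ and $g(e)$ is computed by a R\'edei determinant bordered by the row and column of the prime $2$, whose off-diagonal data is recorded by $z_2$ and $z_{-1}$. I would then organize the right-hand sum over the partitions $[k]=I\sqcup J$ of the prime indices given by $J=\{j:\ell_j\mid d\}$, and match it against the left-hand minor-sum via a generalized Laplace / Cauchy--Binet expansion of $\det\left(\begin{smallmatrix} M & z_{-3}\\ z_2^{\mathrm T} & 1\end{smallmatrix}\right)$, recognizing each complementary pair of blocks as the product of the bordered R\'edei determinant for $e$ and the R\'edei determinant for $d$.

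The main obstacle is that the principal submatrices of the global matrix $M=A+D_{-1}$ are \emph{not} the R\'edei matrices of the divisors: the block on $J$ has correct off-diagonal entries, but its diagonal entries $M_{jj}=\lrdd{n/\ell_j}{\ell_j}+\lrdd{-1}{\ell_j}$ differ from those of $A(d)+D_{-1}(d)$ by the twist $\lrdd{e}{\ell_j}$, so that $M[J,J]=A(d)+D_{-1}(d)+D_e[J]$. The heart of the argument is to show that, after summing over all admissible $(i,j)$ and all partitions $I\sqcup J$, these diagonal cross-terms cancel modulo $2$; this is where the congruences $d\equiv 11$, $n/d\equiv 5\pmod{24}$ and the symmetry of the R\'edei pairing (quadratic reciprocity, encoded in $A+A^{\mathrm T}=z_{-1}z_{-1}^{\mathrm T}+D_{-1}$) enter. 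As an alternative route I would instead prove the identity by induction on the number $k$ of prime factors of $n$, comparing the cofactor recursion of the determinant with the inductive formula for $\CL(-n)$ in Proposition \ref{analytic sha ind 1}; the same difficulty resurfaces there as the fact that adjoining a prime $\ell_{k+1}$ alters \emph{every} diagonal entry of $M$ by $\lrdd{\ell_{k+1}}{\ell_i}$, and this global diagonal dependence must be tracked.
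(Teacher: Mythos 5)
Your reduction is sound and coincides exactly with the paper's: Proposition \ref{descent main}(ii) plus Lemma \ref{linear algebra 7 equation} turn (b) into the oddness of $\det\left(\begin{smallmatrix} A+D_{-1} & z_{-3}\\ z_2^{\mathrm T} & 1\end{smallmatrix}\right)$, the corner expansion splits off $\det(A+D_{-1})\equiv g(n)$, and the double Laplace expansion of the bordered determinant is correct over $\BF_2$. The congruence bookkeeping ($d\equiv 11$, $n/d\equiv 5\pmod{24}$, $d\neq 1,n$) is also right. So you have correctly reduced the theorem to the identity
$$
\det\begin{pmatrix} A+D_{-1} & z_{-3}\\ z_2^{\mathrm T} & 0\end{pmatrix}
\ \equiv\ \sum_{\substack{d\mid n\\ d\equiv 11\ (24)}}g(n/d)\,g(d)\pmod 2 .
$$

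The gap is that this identity is the entire substance of the theorem, and you do not prove it; you only name the obstruction (that $M[J,J]=A(d)+D_{-1}(d)+D_{n/d}[J]$ is not the R\'edei matrix of $d$) and assert that the cross-terms ``must cancel after summing.'' That cancellation is not a routine verification: in the paper it occupies Lemma \ref{linear algebra 2} and, above all, Lemma \ref{linear algebra 3}, whose proof expands both sides as sums over pairs of permutations of complementary index sets, reorganizes them by the cycle decomposition of the product permutation, converts the inner sums into sums over surjections $\phi:\{-1\}\cup K\twoheadrightarrow\{2\}\cup K$, and then exhibits an explicit bijection between such surjections paired with a marked non-trivial cycle and the surjections contributing to the residual term $\LHS_{1\mathrm b}$. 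Neither of your two proposed routes (a generalized Laplace/Cauchy--Binet matching, or induction on $k$ against Proposition \ref{analytic sha ind 1}) is carried out, and in both you candidly flag the same unresolved difficulty --- the global dependence of the diagonal of $M$ on all prime factors --- without supplying the mechanism that resolves it. Until that combinatorial identity is established, the argument is a correct framing of the problem rather than a proof. A secondary, smaller omission: the paper's route from the minor-sum to the divisor-sum also needs the observation that summing Lemma \ref{linear algebra 2} over $i\in\supp(z_{-3})$ counts each $d\equiv 5\pmod 8$ with multiplicity equal to its number of prime factors $\equiv 2\pmod 3$, which kills the $d\equiv 13\pmod{24}$ terms; some such parity argument must appear in any completed version of your approach.
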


In the following let
$n=\ell_1\cdots \ell_k\neq 1$ be a positive square-free integer coprime to $2$.
We keep the notations $A$, $z_d$ and $D_d$ associated to $n$ as in \S\ref{s:matrices}.

First we recall some well-known results on R\'edei matrix.

\begin{prop}
{\rm(i)}
Let $n\neq 1$, $n\equiv 1\mod 4$ be a positive square-free integer.
Then for any $i$,
$$
g(n)\equiv\det\begin{pmatrix}
A & z_2 \\ e_i^{\mathrm T} & 0
\end{pmatrix}
\equiv\det\begin{pmatrix}
A+D_{-1} & e_i \\ z_2^{\mathrm T}+\left[\frac{2}{n}\right]z_{-1}^{\mathrm T} & 0
\end{pmatrix}\mod 2.
$$

{\rm(ii)}
Let $n\neq 1$, $n\equiv 1\mod 2$ be a positive square-free integer.
Then
$$
g(2n)\equiv\det(A+D_2)\mod 2.
$$

{\rm(iii)}
Let $n\equiv 3\mod 4$ be a positive square-free integer.
Then for any $i$ and $j$,
$$
g(n)\equiv\det\begin{pmatrix}
A & e_i \\ e_j^{\mathrm T} & 0
\end{pmatrix}\equiv\det(A+D_{-1})\mod 2.
$$
\end{prop}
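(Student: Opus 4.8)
The plan is to derive all three congruences from R\'edei's theorem on the $4$-rank, using only the two structural identities for $A$: the row relation $A\mathbf 1=0$ (true by the definition $\sum_j a_{ij}=0$) and, via quadratic reciprocity $a_{ij}+a_{ji}=z_i^{(-1)}z_j^{(-1)}$, the column relation $\mathbf 1^{\mathrm T}A=(\operatorname{sum}(z_{-1})+1)\,z_{-1}^{\mathrm T}$. Factoring the discriminant of $\BQ(\sqrt{-n})$ or $\BQ(\sqrt{-2n})$ into $t$ prime discriminants $d_0,\ell_1^\ast,\dots,\ell_k^\ast$, where $\ell_i^\ast=(-1)^{(\ell_i-1)/2}\ell_i$ and $d_0\in\{-4,\pm8\}$ occurs exactly when $2$ ramifies, R\'edei's theorem reads $\rank_4\Cl=t-1-\rank_{\BF_2}R$ for the R\'edei matrix $R$; equivalently $g$ is odd if and only if $R$ has corank exactly $1$. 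The reciprocity identity $[\ell_b^\ast/\ell_a]=a_{ba}$ shows that the odd $k\times k$ block of $R$ agrees off-diagonally with $A^{\mathrm T}$, its diagonal and its prime-$2$ border (entries built from $z_{-1}$ and $z_2$) being fixed by the row-sum normalisation; I will use this to trade the corank condition on $R$ for a determinant condition on a compact $A$-matrix.

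For part (iii), $t=k$ and $R=A^{\mathrm T}$ exactly, and since $n\equiv3\bmod4$ forces both $A\mathbf 1=0$ and $\mathbf 1^{\mathrm T}A=0$, the adjugate satisfies $A\operatorname{adj}(A)=\operatorname{adj}(A)A=0$, so every row and column of $\operatorname{adj}(A)$ lies in the kernel $\langle\mathbf 1\rangle$ and $\operatorname{adj}(A)=c\,\mathbf 1\mathbf 1^{\mathrm T}$; thus $\det\begin{pmatrix}A&e_i\\ e_j^{\mathrm T}&0\end{pmatrix}=e_j^{\mathrm T}\operatorname{adj}(A)e_i=c$ is independent of $i,j$. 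Lemma \ref{A+D-1} gives $\det(A+D_{-1})\ne0$ iff $\rank A=k-1$ iff $c\ne0$, so all three quantities coincide with $g(n)\bmod2$.

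Part (i) I would split into showing the two determinants are equal and that one of them is $\equiv g(n)$. For the first determinant, the bordered identity gives $\det\begin{pmatrix}A&z_2\\ e_i^{\mathrm T}&0\end{pmatrix}=e_i^{\mathrm T}\operatorname{adj}(A)z_2$; since $A\mathbf 1=0$ the columns of $\operatorname{adj}(A)$ lie in $\ker A$, so when $\corank A=1$ one has $\operatorname{adj}(A)=\mathbf 1u^{\mathrm T}$ with $u^{\mathrm T}A=0$, and the determinant collapses to the $i$-independent value $u^{\mathrm T}z_2$ (and to $0$ when $\corank A\ge2$). The R\'edei corank-$1$ criterion then has to be matched with the condition $u^{\mathrm T}z_2=1$, which is the point where the prime-$2$ entries of $R$ enter. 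To see the two determinants agree I would transpose, substitute $A^{\mathrm T}=(A+D_{-1})+z_{-1}z_{-1}^{\mathrm T}$, linearise the rank-one term by the Schur expansion $\det\begin{pmatrix}A+D_{-1}&z_{-1}&e_i\\ z_{-1}^{\mathrm T}&1&0\\ z_2^{\mathrm T}&0&0\end{pmatrix}$, and add the first $k$ columns to the middle one; using $(A+D_{-1})\mathbf 1=z_{-1}$, $\operatorname{sum}(z_{-1})=0$ and $\operatorname{sum}(z_2)=\left[\frac{2}{n}\right]$ that column becomes $(0,1,\left[\frac{2}{n}\right])^{\mathrm T}$, and clearing it produces $\det\begin{pmatrix}A+D_{-1}&e_i\\ z_2^{\mathrm T}+\left[\frac{2}{n}\right]z_{-1}^{\mathrm T}&0\end{pmatrix}$.

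Part (ii) is analogous but now $d_0=\pm8$, and the odd block of $R$ works out to $A^{\mathrm T}+D_2$ modulo a parity-dependent term $(\operatorname{sum}(z_{-1})+1)D_{-1}$; after the prime-$2$ border is reduced this term must cancel, leaving $g(2n)$ odd iff $A+D_2$ is invertible, i.e.\ $\det(A+D_2)=1$. I expect the genuine obstacle in all parts to be precisely this bookkeeping rather than the linear algebra: fixing a convention for $R$, evaluating the Kronecker symbols $[\ell_b^\ast/2]$ and $[d_0/\ell_a]$ together with the $\infty$-place contribution to the (non)symmetry of $R$ and the twists $\ell_i^\ast=\pm\ell_i$, and then carrying out the row- and column-operations on the prime-$2$ border so that the diagonal corrections collapse to exactly $D_{-1}$ (in (i)) or $D_2$ (in (ii)). The adjugate computation, the Schur linearisation and the column operations above are then routine.
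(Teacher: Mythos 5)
The paper does not prove this proposition at all --- it is introduced with ``First we recall some well-known results on R\'edei matrix'' and no proof environment follows --- so your blind attempt is supplying an argument the authors chose to omit, and the natural benchmark is whether your derivation from R\'edei's theorem actually closes. It does, and it is the standard route. The parts you work out in full are correct: in (iii), $n\equiv 3\bmod 4$ gives both $A\mathbf 1=0$ and $\mathbf 1^{\mathrm T}A=0$, so $\operatorname{adj}(A)=c\,\mathbf 1\mathbf 1^{\mathrm T}$ and the bordered determinant is the $(i,j)$-independent scalar $c$, nonzero exactly when $\corank(A)=1$; and your passage between the two determinants in (i) is right --- after the Schur linearisation, the sum of the first $k$ columns is $\bigl((A+D_{-1})\mathbf 1,\ z_{-1}^{\mathrm T}\mathbf 1,\ z_2^{\mathrm T}\mathbf 1\bigr)=\bigl(z_{-1},0,\left[\tfrac{2}{n}\right]\bigr)$, so the middle column becomes $(0,1,\left[\tfrac{2}{n}\right])^{\mathrm T}$ and clearing it yields the second matrix. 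The step you defer as ``bookkeeping'' is genuinely the remaining content, but it is routine and does work out: since every $\ell^*\equiv 1\bmod 4$, one has $[\ell^*/2]=[2/\ell]$, so the prime-$2$ row of the R\'edei matrix $R$ is $(\left[\tfrac{2}{n}\right],z_2^{\mathrm T})$ and its odd-odd block is exactly $A^{\mathrm T}$ in (i) (resp.\ exactly $A^{\mathrm T}+D_2$ in (ii) --- the parity-dependent $D_{-1}$ correction you anticipate in fact cancels identically, since the $z_{-1}$-contributions from reciprocity and from the column of $d_0=\pm 8$ offset each other). Because $R\mathbf 1=0$, column-reducing gives $\rank R=\rank(A\mid z_2)$ in case (i), which matches your condition $u^{\mathrm T}z_2=1$ together with $\corank(A)=1$; in case (ii) you additionally need that the border row $z_2^{\mathrm T}$ does not raise the rank, which follows from $(A+D_2)\mathbf 1=z_2$, a point worth making explicit. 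With those two small items filled in, your proof is complete and correct.
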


The following is a key linear algebra lemma.

\begin{lem}
\label{linear algebra 3}
Let $k\geq 1$ be an integer,
$n=\ell_1\cdots\ell_k\neq 1$ be a positive square-free integer coprime to $2$.
Then for any $i_1$ and $i_2$,
$$
\sum_{\substack{i_1\in I\subset[k]\\
i_2\notin I}}
\left(\sum_{i\in I'}\left[\frac{-1}{\ell_i}\right]\right)
\det\begin{pmatrix}
A[I',I']_\Rows & e_{i_2}[I'] \\
e_{i_2}^{\mathrm T}[I'] & 0
\end{pmatrix}
\det\begin{pmatrix}
A[I,I]_\Rows & e_{i_1}[I] \\
e_{i_1}^{\mathrm T}[I] & 0
\end{pmatrix}
=\det\begin{pmatrix}
A+D_{-1} & e_{i_1} \\
e_{i_1}^{\mathrm T}+e_{i_2}^{\mathrm T} & 0
\end{pmatrix},
$$
where $I':=[k]\setminus I$.
\end{lem}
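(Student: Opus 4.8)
The plan is to recast both sides in terms of cofactors (deleted-row-column minors) and then to exploit the reciprocity relation $A+A^{\mathrm T}=z_{-1}z_{-1}^{\mathrm T}+D_{-1}$ recorded in \S\ref{s:matrices}. First I would record the elementary identity, valid over $\BF_2$ for any square matrix $M$ and indices $a,b$,
$$
\det\begin{pmatrix} M & e_b\\ e_a^{\mathrm T} & 0\end{pmatrix}\equiv\det\bigl(M\text{ with row }b\text{ and column }a\text{ deleted}\bigr)\pmod 2 .
$$
Applied to the right-hand side, with $M=A+D_{-1}$, column border $e_{i_1}$ and row border $e_{i_1}^{\mathrm T}+e_{i_2}^{\mathrm T}$, it shows that the right-hand side equals $\det(M')$, where $M'$ is $A+D_{-1}$ with its $i_1$-th row replaced by $e_{i_1}^{\mathrm T}+e_{i_2}^{\mathrm T}$. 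Applied to the left-hand factors it identifies $\Phi_{i_1}(I):=\det\left(\begin{smallmatrix}A[I,I]_\Rows & e_{i_1}[I]\\ e_{i_1}^{\mathrm T}[I] & 0\end{smallmatrix}\right)$ with the $(i_1,i_1)$-cofactor of the block Laplacian $A[I,I]_\Rows$, and similarly $\Phi_{i_2}(I')$, so each summand on the left is a product of two rooted spanning-tree quantities for the induced subgraphs on $I$ and $I'=[k]\setminus I$.

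The decisive input is that over $\BF_2$ this reciprocity relation reads $A+D_{-1}=A^{\mathrm T}+z_{-1}z_{-1}^{\mathrm T}$. Writing $w_j:=\lrdd{-1}{\ell_j}$ (so that $z_{-1}=(w_1,\dots,w_k)^{\mathrm T}$), I would substitute this into $M'$ and expand $\det(M')$ multilinearly in the rank-one part: every row $j\neq i_1$ becomes $(A^{\mathrm T})_j+w_j z_{-1}^{\mathrm T}$, and any term that selects the summand $w_j z_{-1}^{\mathrm T}$ in two or more rows has two equal rows and hence vanishes. Thus $\det(M')$ collapses to a sum of $O(k)$ determinants of $A^{\mathrm T}$ in which the $i_1$-th row is $e_{i_1}^{\mathrm T}+e_{i_2}^{\mathrm T}$ and at most one further row equals $z_{-1}^{\mathrm T}$; transposing, each such determinant is a principal or near-principal cofactor of $A$, that is, one of the rooted spanning-tree quantities that also occur on the left.

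To match these with the left-hand side I would use the matrix--tree/forest dictionary as bookkeeping: the product $\Phi_{i_1}(I)\Phi_{i_2}(I')$ enumerates spanning $2$-forests of the graph of $A$ with roots $i_1,i_2$ and vertex classes $I,I'$, while the scalar $\sum_{i\in I'}w_i$ adjoins a single ``ground'' edge emanating from the block $I'$. Summing over $I$ therefore counts forests-with-one-ground-edge, whereas the collapsed right-hand side counts spanning $2$-forests of the augmented graph $\widehat G$ (an extra vertex $0$ joined to $i$ by an edge of weight $w_i$) in which $i_1$ lies off the ground component and $i_2$ on it, with the ground vertex of \emph{arbitrary} degree. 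The bridge is the local reciprocity $a_{ij}+a_{ji}=w_iw_j$ for $i\neq j$, together with the idempotence $w_i^2=w_i$: these trade a pair of ground attachments for the two orientations of a single edge of $G$, producing a mod-$2$ involution that cancels every configuration whose ground vertex has degree $\ge 2$ and leaves exactly the left-hand sum.

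I expect this reconciliation to be the main obstacle, precisely because the statement is false for a general matrix of vanishing row sums and becomes true only through reciprocity. The small cases make this visible: for $k=3$, $i_1=1$, $i_2=2$, the two counts differ by $w_2w_3(w_3+1)$, which vanishes only after using both $a_{23}+a_{32}=w_2w_3$ and $w_3^2=w_3$. Organizing these cancellations uniformly in $k$---equivalently, making the involution on higher-ground-degree forests precise while correctly tracking orientations, since $A$ is not symmetric---is the delicate point. To sidestep the combinatorial bookkeeping I would instead push the collapsed algebraic expansion of $\det(M')$ through to the end and expand the left-hand $\Phi$'s into the same principal and near-principal cofactors of $A$ (via multilinearity in the diagonal correction relating $A[I,I]_\Rows$ to the restriction of $A$ to $I$), checking that the two finite sums agree termwise, with reciprocity and $w_i^2=w_i$ supplying the only nontrivial cancellations.
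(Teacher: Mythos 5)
Your setup is sound: the bordered-determinant-to-deleted-minor dictionary is correct over $\BF_2$, the reduction of the right-hand side via $A+D_{-1}=A^{\mathrm T}+z_{-1}z_{-1}^{\mathrm T}$ and multilinear expansion in the rank-one part is exactly the kind of first step the paper also takes (it transposes, uses the reciprocity relation, and collapses the right-hand side to a single $(k-1)\times(k-1)$ determinant whose first row carries the $\lrdd{-1}{\ell_i}$ data and whose remaining rows are rows of $A$), and you have correctly diagnosed that the identity lives or dies on the reciprocity $a_{ij}+a_{ji}=\lrdd{-1}{\ell_i}\lrdd{-1}{\ell_j}$ together with the zero-row-sum convention for the diagonal.

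The genuine gap is that the central cancellation is never established. Everything after the collapse of the right-hand side is conditional: the ``mod-$2$ involution that cancels every configuration whose ground vertex has degree $\geq 2$'' is not constructed, and your fallback --- ``checking that the two finite sums agree termwise'' --- is precisely the computation that constitutes the proof, not a way of sidestepping it. You yourself flag this as the main obstacle and exhibit a $k=3$ discrepancy that must cancel, but you do not exhibit the mechanism in general. For comparison, the paper does this by brute combinatorial force: it expands each factor $\det\left(\begin{smallmatrix}A[I,I]_\Rows & e_{i_1}[I]\\ e_{i_1}^{\mathrm T}[I] & 0\end{smallmatrix}\right)$ as a sum over permutations $\tau\in\Aut(I)$ (fixed points contributing row sums), regroups the quadruple sum over $(I,I',\tau,\tau')$ according to the single permutation $\sigma=\tau\tau'$ of $[3\lddot k]$, and then shows by an explicit computation over the fixed-point set $K$ and the cycle set that the inner sum $\sum_{J,S}I_{\sigma,J,S}$ vanishes unless $\sigma$ is the identity or a single nontrivial cycle; the surviving terms are then matched to the expanded right-hand side through an explicit bijection between surjections $\phi:\{-1\}\cup[3\lddot k]\twoheadrightarrow[3\lddot k]$ (whose doubled fibre avoids its image point) and pairs consisting of a nontrivial cycle and a surjection onto $\{2\}\cup K$. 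This regrouping-by-$\sigma$ and the vanishing for $s\geq 2$ is the substitute for your hoped-for involution, and it is the part of the argument your proposal leaves entirely open; until it (or an honest forest-reciprocity lemma for non-symmetric $A$ over $\BF_2$) is supplied, the proof is incomplete.
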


\begin{proof}
When $i_1=i_2$ both sides are zero, so after changing the order
of the prime factors of $n$, in the following we may assume $k\geq 2$
and $(i_1,i_2)=(1,2)$.
Then the right hand side
\begin{align*}
\RHS&=\det\begin{pmatrix}
A+z_{-1}z_{-1}^{\mathrm T} & e_1+e_2 \\
e_1^{\mathrm T} & 0
\end{pmatrix}
=\det\begin{pmatrix}
A+z_{-1}z_{-1}^{\mathrm T} & e_1+e_2 & e_2 \\
e_1^{\mathrm T} & 0 & 0 \\
0 & 0 & 1
\end{pmatrix}
=\det\begin{pmatrix}
A & e_1 & e_2 \\
e_1^{\mathrm T} & 0 & 0 \\
z_{-1}^{\mathrm T} & 0 & 0
\end{pmatrix} \\
&=\det\begin{pmatrix}
\left[\frac{-1}{\ell_2}\right] & \left[\frac{-1}{\ell_3}\right] &
\cdots & \left[\frac{-1}{\ell_k}\right] \\
a_{32} & a_{33} & \cdots & a_{3k} \\
\vdots & \vdots & & \vdots \\
a_{k2} & a_{k3} & \cdots & a_{kk} \\
\end{pmatrix}.
\end{align*}
In the left hand side $1\in I$ and $2\in I'$,
so we may set new $I$ and $I'$ be subsets of $[3\lddot k]$
such that the original $I$ and $I'$ equals
$\{1\}\cup I$ and $\{2\}\cup I'$, respectively,
and so
\begin{align*}
\LHS&=\sum_{I\subset[3\lddot k]}
\left(\sum_{i\in\{2\}\cup I'}\left[\frac{-1}{\ell_i}\right]\right)
\left(\sum_{\tau'\in\Aut(I')}
\prod_{\substack{i\in I'\\
\tau'(i)\neq i}}a_{i,\tau'(i)}
\prod_{\substack{i\in I'\\
\tau'(i)=i}}\left(
\sum_{j\in\{2\}\cup I'\setminus\{i\}}a_{ij}\right)\right) \\
&\qquad{}\left(\sum_{\tau\in\Aut(I)}
\prod_{\substack{i\in I\\
\tau(i)\neq i}}a_{i,\tau(i)}
\prod_{\substack{i\in I\\
\tau(i)=i}}\left(
\sum_{j\in\{1\}\cup I\setminus\{i\}}a_{ij}\right)\right).
\end{align*}
Each tuple $(I,I',\tau,\tau')$ appeared in the above sum produces
an element $\sigma=\tau\tau'$ of $\Aut([3\lddot k])$.
Conversely, for an element $\sigma\in\Aut([3\lddot k])$,
let $K\subset[3\lddot k]$ be the set of fixed points of $\sigma$,
and write $[3\lddot k]\setminus K=\bigsqcup_{i=1}^sI_i$
according to the decomposition of $\sigma$
into product of disjoint cycles,
then the set of $(I,I',\tau,\tau')$ which produces $\sigma$
is one-to-one correspondence to the set of $(J,S)$
where $J\subset K$ and $S\subset[s]$,
given by $I=J\sqcup\bigsqcup_{i\in S}I_i$
and hence $I'=(K\setminus J)\sqcup\bigsqcup_{i\in[s]\setminus S}I_i$.
Therefore the $\LHS$ can be rewritten as
$$
\LHS=\sum_{\sigma\in\Aut([3\lddot k])}
\left(\prod_{i\in[3\lddot k]\setminus K}a_{i,\sigma(i)}\right)
\sum_{\substack{J\subset K\\
S\subset[s]}}
I_{\sigma,J,S}
$$
where
$$
I_{\sigma,J,S}:=
\left(\sum_{i\in\{2\}\cup I'}\left[\frac{-1}{\ell_i}\right]\right)
\prod_{i\in K\setminus J}\left(
\sum_{j\in\{2\}\cup I'\setminus\{i\}}a_{ij}\right)
\prod_{i\in J}\left(
\sum_{j\in\{1\}\cup I\setminus\{i\}}a_{ij}\right).
$$
For a fixed $\sigma$ and a fixed $S$,
denote $\widetilde a_{i2}:=\sum_{j\in\{2\}\cup I_0'\cup K\setminus\{i\}}a_{ij}$
for each $i\in K$,
where $I_0':=\bigsqcup_{i\in[s]\setminus S}I_i$.
Also denote $\widetilde a_{-1,2}:=\sum_{i\in\{2\}\cup I_0'\cup K}\left[\frac{-1}{\ell_i}\right]$.
Let $\delta:K\to\BF_2$, $i\mapsto\delta_i$ be such that
$\delta_i=1$ if $i\in J$, $\delta_i=0$ if $i\notin J$,
then it's easy to see that
$$
I_{\sigma,J,S}
=\left(\widetilde a_{-1,2}+\sum_{i\in K}\delta_i\left[\frac{-1}{\ell_i}\right]\right)
\prod_{i\in K}\left(
\widetilde a_{i2}+\sum_{j\in K}\delta_ja_{ij}
\right).
$$
Hence it's easy to see that
\begin{align*}
\sum_{J\subset K}I_{\sigma,J,S}
&=\sum_{\delta:K\to\BF_2}
\left(\widetilde a_{-1,2}+\sum_{i\in K}\delta_i\left[\frac{-1}{\ell_i}\right]\right)
\prod_{i\in K}\left(
\widetilde a_{i2}+\sum_{j\in K}\delta_ja_{ij}
\right) \\
&=\sum_{\phi:\{-1\}\cup K\to\{2\}\cup K}
2^{\#(K\setminus\Im(\phi))}
\prod_{i\in\phi^{-1}(2)}\widetilde a_{i2}
\prod_{i\in\phi^{-1}(K)}a_{i,\phi(i)},
\end{align*}
where for the simplicity of notation,
$a_{-1,j}:=\left[\frac{-1}{\ell_j}\right]$ for each $j\in K$.
Since we are working over $\BF_2$,
for each $\phi$ appeared in the above summation
which contributes a non-zero term,
it must satisfy that $K\subset\Im(\phi)$, and hence
the $\phi^{-1}(2)$ contains at most one element,
therefore
\begin{multline*}
\sum_{\substack{J\subset K\\
S\subset[s]}}
I_{\sigma,J,S}
=2^s\sum_{\phi:\{-1\}\cup K\twoheadrightarrow K}
\prod_{i\in\{-1\}\cup K}a_{i,\phi(i)} \\
+\sum_{\phi:\{-1\}\cup K\twoheadrightarrow\{2\}\cup K}
\left(2^s\sum_{j\in\{2\}\cup K\setminus\{\phi^{-1}(2)\}}a_{\phi^{-1}(2),j}
+2^{s-1}\sum_{j\in[3\lddot k]\setminus K}a_{\phi^{-1}(2),j}\right)
\prod_{i\in\phi^{-1}(K)}a_{i,\phi(i)},
\end{multline*}
where $2^{s-1}$ is understood as zero if $s=0$.
In other words,
$$
\sum_{\substack{J\subset K\\
S\subset[s]}}
I_{\sigma,J,S}
=\begin{cases}
\displaystyle\sum_{\phi:\{-1\}\cup K\twoheadrightarrow K}
\prod_{i\in\{-1\}\cup K}a_{i,\phi(i)} \\
\displaystyle\qquad{}+\sum_{\phi:\{-1\}\cup K\twoheadrightarrow\{2\}\cup K}
\left(\sum_{j\in\{2\}\cup K\setminus\{\phi^{-1}(2)\}}a_{\phi^{-1}(2),j}
\right)
\prod_{i\in\phi^{-1}(K)}a_{i,\phi(i)},&\text{if }s=0, \\
\displaystyle\sum_{\phi:\{-1\}\cup K\twoheadrightarrow\{2\}\cup K}
\left(\sum_{j\in[3\lddot k]\setminus K}a_{\phi^{-1}(2),j}\right)
\prod_{i\in\phi^{-1}(K)}a_{i,\phi(i)}, &\text{if }s=1, \\
0,&\text{if }s\geq 2.
\end{cases}
$$
Note that $s=0$ if and only if $\sigma=\id$, and in this case $K=[3\lddot k]$.
Therefore we may write $\LHS=\LHS_1+\LHS_2+\LHS_3$ with
\begin{align*}
\LHS_1&=\sum_{\phi:\{-1\}\cup[3\lddot k]\twoheadrightarrow[3\lddot k]}
\prod_{i\in\{-1\}\cup[3\lddot k]}a_{i,\phi(i)}, \\
\LHS_2&=\sum_{\phi:\{-1\}\cup[3\lddot k]\twoheadrightarrow[2\lddot k]}
\left(\sum_{j\in[2\lddot k]\setminus\{\phi^{-1}(2)\}}a_{\phi^{-1}(2),j}
\right)
\prod_{i\in\phi^{-1}([3\lddot k])}a_{i,\phi(i)}, \\
\LHS_3&=\sum_{\substack{\sigma\in\Aut([3\lddot k])\\
\sigma\neq 1\text{ cycle}}}
\left(\prod_{i\in[3\lddot k]\setminus K}a_{i,\sigma(i)}\right)
\sum_{\phi:\{-1\}\cup K\twoheadrightarrow\{2\}\cup K}
\left(\sum_{j\in[3\lddot k]\setminus K}a_{\phi^{-1}(2),j}\right)
\prod_{i\in\phi^{-1}(K)}a_{i,\phi(i)}.
\end{align*}
It's easy to see that
$$
\LHS_2=\det\begin{pmatrix}
\left[\frac{-1}{\ell_2}\right] & \left[\frac{-1}{\ell_3}\right] &
\cdots & \left[\frac{-1}{\ell_k}\right] \\
a_{32}+a_{33} & a_{33} & \cdots & a_{3k} \\
\vdots & \vdots & & \vdots \\
a_{k2}+a_{kk} & a_{k3} & \cdots & a_{kk} \\
\end{pmatrix},
$$
as well as
$$
\LHS_{1\mathrm a}
:=\sum_{\substack{\phi:\{-1\}\cup[3\lddot k]\twoheadrightarrow[3\lddot k]\\
\text{such that let }j\in[3\lddot k]\text{ be the unique element}\\
\text{such that }\#\phi^{-1}(j)=2,\text{ then }j\in\phi^{-1}(j)}}
\prod_{i\in\{-1\}\cup[3\lddot k]}a_{i,\phi(i)}
=\det\begin{pmatrix}
0 & \left[\frac{-1}{\ell_3}\right] &
\cdots & \left[\frac{-1}{\ell_k}\right] \\
a_{33} & a_{33} & \cdots & a_{3k} \\
\vdots & \vdots & & \vdots \\
a_{kk} & a_{k3} & \cdots & a_{kk} \\
\end{pmatrix},
$$
which is a part of $\LHS_1$. Therefore in order to show
$\LHS=\RHS$, we only need to prove that
$$
\LHS_{1\mathrm b}
:=\sum_{\substack{\phi:\{-1\}\cup[3\lddot k]\twoheadrightarrow[3\lddot k]\\
\text{such that let }j\in[3\lddot k]\text{ be the unique element}\\
\text{such that }\#\phi^{-1}(j)=2,\text{ then }j\notin\phi^{-1}(j)}}
\prod_{i\in\{-1\}\cup[3\lddot k]}a_{i,\phi(i)}
=\LHS_3.
$$
For each non-trivial cycle $\sigma\in\Aut([3\lddot k])$,
each $j\in[3\lddot k]\setminus K$ and
each $\phi:\{-1\}\cup K\twoheadrightarrow\{2\}\cup K$
appeared in $\LHS_3$, we can define a map $\phi':\{-1\}\cup[3\lddot k]\to
[3\lddot k]$ by
$$
\phi'(i):=\begin{cases}
\phi(i),&\text{if }i\in\phi^{-1}(K), \\
j,&\text{if }i=\phi^{-1}(2), \\
\sigma(i),&\text{if }i\in[3\lddot k]\setminus K,
\end{cases}
$$
then it's easy to see that $\phi'$ is surjective, $j$ is the unique element
in $[3\lddot k]$ such that $\#(\phi')^{-1}(j)=2$,
and $(\phi')^{-1}(j)=\{\phi^{-1}(2),\sigma^{-1}(j)\}$ which doesn't contain $j$.
Conversely, if $\phi':\{-1\}\cup[3\lddot k]\twoheadrightarrow
[3\lddot k]$ is appeared in $\LHS_{1\mathrm b}$,
namely, the unique element $j\in[3\lddot k]$
such that $\#(\phi')^{-1}(j)=2$ satisfies $j\notin(\phi')^{-1}(j)$,
then the set $\{(\phi')^{(m)}(j)\}_{m\geq 0}$ has at least two elements,
and by the property of $j$, $(\phi')^{(m)}(j)\mapsto(\phi')^{(m+1)}(j)$ gives a non-trivial
cycle $\sigma\in\Aut([3\lddot k])$
with $j\in[3\lddot k]\setminus K=\{(\phi')^{(m)}(j)\}_{m\geq 0}$,
and we can define $\phi:\{-1\}\cup K\to\{2\}\cup K$
by
$$
\phi(i):=\begin{cases}
\phi'(i),&\text{if }i\in(\phi')^{-1}(K), \\
2,&\text{if }i\in(\phi')^{-1}(j),
\end{cases}
$$
and it's easy to see that $\phi$ is surjective.
Now it's easy to see that these two assignments are inverse to each other,
hence we deduce that $\LHS_{1\mathrm b}=\LHS_3$, which completes the proof.
\end{proof}

\begin{lem}
\label{linear algebra 2}
Let $n\equiv 7\mod 8$ be a positive square-free integer.
Then for any $t$,
$$
\sum_{\substack{\ell_t\mid d\mid n\\
d\equiv 5\mod 8}}g(n/d)g(d)
\equiv\det\begin{pmatrix}
A+D_{-1} & e_t \\
z_2^{\mathrm T} & 0
\end{pmatrix}\mod 2.
$$
\end{lem}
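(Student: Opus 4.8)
The plan is to reduce the statement to the combinatorial determinant identity of Lemma \ref{linear algebra 3}, after rewriting the right-hand side in a matching form; the genus numbers will enter through the R\'edei-matrix proposition recalled just above. Throughout I parametrize a divisor $d\mid n$ with $\ell_t\mid d$ by $d=\prod_{i\in I}\ell_i$ with $t\in I$ and $I'=[k]\setminus I$, so that $A[I,I]_\Rows=A(d)$ and $A[I',I']_\Rows=A(n/d)$. The two arithmetic facts I would use constantly are that $n\equiv7\bmod8$ forces $n\equiv3\bmod4$, hence $\left[\frac{-1}{n}\right]=1$, and that $\left[\frac{2}{n}\right]=0$, i.e.\ $\sum_{s=1}^k\left[\frac{2}{\ell_s}\right]=0$.

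First I would rewrite the right-hand side. Expanding the last row $z_2^{\mathrm T}=\sum_s\left[\frac{2}{\ell_s}\right]e_s^{\mathrm T}$ by multilinearity, and using $\left[\frac{2}{n}\right]=0$ to insert the harmless term $\left[\frac{2}{n}\right]e_t^{\mathrm T}$ that symmetrizes the bottom row, gives
\[
\det\begin{pmatrix}A+D_{-1}&e_t\\ z_2^{\mathrm T}&0\end{pmatrix}
=\sum_{s}\left[\frac{2}{\ell_s}\right]\det\begin{pmatrix}A+D_{-1}&e_t\\ e_t^{\mathrm T}+e_s^{\mathrm T}&0\end{pmatrix}.
\]
I then apply Lemma \ref{linear algebra 3} with $i_1=t$, $i_2=s$ to each summand and interchange the order of summation. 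In the resulting double sum the factor $\sum_{i\in I'}\left[\frac{-1}{\ell_i}\right]=\left[\frac{-1}{n/d}\right]$ is the indicator of $d\equiv1\bmod4$ (since $n\equiv3\bmod4$); on those terms $n/d\equiv3\bmod4$, so by the R\'edei formula for fields $\equiv3\bmod4$ the factor $\det\begin{pmatrix}A(n/d)&e_s\\ e_s^{\mathrm T}&0\end{pmatrix}$ equals $g(n/d)$ for every $s\in I'$. Summing over $s\in I'$ produces the weight $\sum_{s\in I'}\left[\frac{2}{\ell_s}\right]=\left[\frac{2}{n/d}\right]=\left[\frac{2}{d}\right]$ (using $\left[\frac{2}{n}\right]=0$ again), which is the indicator of $d\equiv5\bmod8$ among $d\equiv1\bmod4$. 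Hence
\[
\det\begin{pmatrix}A+D_{-1}&e_t\\ z_2^{\mathrm T}&0\end{pmatrix}
=\sum_{\substack{\ell_t\mid d\mid n\\ d\equiv5\bmod8}}c_t(d)\,g(n/d),
\qquad c_t(d):=\det\begin{pmatrix}A(d)&e_t\\ e_t^{\mathrm T}&0\end{pmatrix},
\]
the $(t,t)$-cofactor of $A(d)$.

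The remaining, and main, difficulty is that $c_t(d)$ is \emph{not} the genus number $g(d)$. For $d\equiv1\bmod4$ the prime $2$ ramifies in $\BQ(\sqrt{-d})$, and the R\'edei formula in the case $\equiv1\bmod4$ computes $g(d)=\det\begin{pmatrix}A(d)&(z_2)_I\\ e_t^{\mathrm T}&0\end{pmatrix}$ with the column $(z_2)_I$ in place of $e_t$, so that $c_t(d)+g(d)=\det\begin{pmatrix}A(d)&(z_2)_I+e_t\\ e_t^{\mathrm T}&0\end{pmatrix}$ is generally nonzero (for instance when $d=\ell\ell'$ with $\ell\equiv7$, $\ell'\equiv3\bmod8$ and $t$ the index of $\ell$, reciprocity forces $c_t(d)+g(d)=1$). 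Thus the theorem is equivalent to the cancellation
\[
\sum_{\substack{\ell_t\mid d\mid n\\ d\equiv5\bmod8}}\bigl(c_t(d)+g(d)\bigr)\,g(n/d)=0
\quad\text{in }\BF_2,
\]
and I expect this to be the crux. The mechanism I would pursue is an involution on the set of factorizations $d\mid n$ that swaps one prime of $d\setminus\{\ell_t\}$ with one prime of $n/d$: a direct check, controlled by quadratic reciprocity (which governs how $a_{ij}$ and $a_{ji}$ differ), should show that two factorizations related by such a swap contribute equal summands and cancel in pairs, while the fixed points (e.g.\ $d=\ell_t$ when $\ell_t\equiv5\bmod8$, where $c_t(\ell_t)=g(\ell_t)=1$) carry a vanishing factor.

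Alternatively, one can avoid the detour through $c_t(d)$ altogether. Expanding along the $e_t$-column gives $\det\begin{pmatrix}A+D_{-1}&e_t\\ z_2^{\mathrm T}&0\end{pmatrix}=\det\begin{pmatrix}(A+D_{-1})[[k]\setminus\{t\},\,\cdot\,]\\ z_2^{\mathrm T}\end{pmatrix}$, and running the same permutation-grouping expansion as in the proof of Lemma \ref{linear algebra 3}, but carrying the distinguished row $z_2^{\mathrm T}$ throughout, should produce the genus-number products $g(d)\,g(n/d)$ directly and bypass the prime-$2$ bookkeeping. The cost of this route is reproducing the bijective argument of Lemma \ref{linear algebra 3} in the presence of the extra $z_2$-row rather than quoting it; either way, the essential content is the reconciliation of the naive cofactor with the genuine genus number contributed by the ramified prime $2$.
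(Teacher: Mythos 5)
Your opening reduction is sound and, up to the point where you invoke Lemma \ref{linear algebra 3}, it runs parallel to the paper's argument: expanding the bottom row $z_2^{\mathrm T}$ by multilinearity and using $\sum_s\left[\frac{2}{\ell_s}\right]=0$ to symmetrize it is exactly the paper's first step, and your bookkeeping of the indicators (the factor $\left[\frac{-1}{n/d}\right]$ cutting to $d\equiv 1\bmod 4$, then the weight $\left[\frac{2}{d}\right]$ cutting to $d\equiv 5\bmod 8$) is correct. But the proof stops at precisely the point you yourself flag as the crux: the identity
$$
\sum_{\substack{\ell_t\mid d\mid n\\ d\equiv 5\bmod 8}}\bigl(c_t(d)+g(d)\bigr)\,g(n/d)\equiv 0\pmod 2
$$
is asserted, not proved. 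The involution you sketch --- swapping one prime of $d\setminus\{\ell_t\}$ with one prime of $n/d$ and claiming the two summands agree --- is not a credible mechanism as stated: both factors $c_t(d)+g(d)=\det\bigl(\begin{smallmatrix}A(d)&e_t+z_2[I]\\ e_t^{\mathrm T}&0\end{smallmatrix}\bigr)$ and $g(n/d)$ change under such a swap in a way governed by the full determinantal structure, no computation is offered, and if summands really were constant along all swap-orbits the conclusion would be far stronger than what is true. Your fallback (rerunning the permutation-grouping expansion of Lemma \ref{linear algebra 3} while carrying the extra $z_2$ row) is likewise a plan rather than an argument. So there is a genuine gap.

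For comparison, the paper never introduces the mismatched cofactor $c_t(d)$. It writes the left-hand side $\sum g(n/d)g(d)$ directly through the R\'edei determinants, keeping the column $z_2[I]$ inside the $g(d)$ factor, and then declares $\left[\frac{2}{\ell_2}\right],\dots,\left[\frac{2}{\ell_k}\right]$ to be independent variables over $\BF_2$ (with $\left[\frac{2}{\ell_1}\right]$ their sum), comparing coefficients of monomials on the two sides: the coefficient of $\left[\frac{2}{\ell_j}\right]^2=\left[\frac{2}{\ell_j}\right]$ on the left is exactly the left-hand side of Lemma \ref{linear algebra 3} with $(i_1,i_2)=(t,j)$, matching the linear coefficient on the right, while the cross coefficients $\left[\frac{2}{\ell_{i_1}}\right]\left[\frac{2}{\ell_{i_2}}\right]$ on the left vanish. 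That cross-term vanishing is precisely the counterpart of your missing cancellation; to complete your route you would have to supply an honest proof of one or the other.
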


\begin{proof}
Let $k$ be the number of prime factors
of $n$. We may assume that $k\geq 2$.
By changing the order of the prime factors
$\ell_1,\cdots,\ell_k$ of $n$, we only need to prove the $t=1$ case.
In this case the left hand side can be written as
$$
\LHS=\sum_{1\in I\subset[k]}
\left(\sum_{i\in I'}\left[\frac{-1}{\ell_i}\right]\right)
\left(\sum_{i\in I'}\left[\frac{2}{\ell_i}\right]\right)
\det\begin{pmatrix}
A[I',I']_\Rows & e_{i'}[I'] \\
e_{i'}^{\mathrm T}[I'] & 0
\end{pmatrix}
\det\begin{pmatrix}
A[I,I]_\Rows & z_2[I] \\
e_1^{\mathrm T}[I] & 0
\end{pmatrix},
$$
where $I':=[k]\setminus I$,
and for each $I$ such that $I'\neq\emptyset$,
we fix an element $i'\in I'$.
The right hand side can be written sa
$$
\RHS=\sum_{i=1}^k\left[\frac{2}{\ell_i}\right]
\det\begin{pmatrix}
A+D_{-1} & e_1 \\
e_i^{\mathrm T} & 0
\end{pmatrix}
=\sum_{i=2}^k\left[\frac{2}{\ell_i}\right]
\det\begin{pmatrix}
A+D_{-1} & e_1 \\
e_1^{\mathrm T}+e_i^{\mathrm T} & 0
\end{pmatrix}.
$$
On the other hand, if $1\in I$, then
$$
\det\begin{pmatrix}
A[I,I]_\Rows & z_2[I] \\
e_1^{\mathrm T}[I] & 0
\end{pmatrix}
=\sum_{i=2}^k\left[\frac{2}{\ell_i}\right]
\det\begin{pmatrix}
A[I,I]_\Rows & (e_1+e_i)[I] \\
e_1^{\mathrm T}[I] & 0
\end{pmatrix}.
$$
Now we regard the entries in $z_2$ be independent of those in $A$ and $z_{-1}$,
namely, view $\left[\frac{2}{\ell_2}\right],\cdots,\left[\frac{2}{\ell_k}\right]$
as independent variables in $\BF_2$, and let
$\left[\frac{2}{\ell_1}\right]
:=\left[\frac{2}{\ell_2}\right]+\cdots+\left[\frac{2}{\ell_k}\right]$.
It's easy to see that if $2\leq i_1<i_2\leq k$,
then the coefficient of $\left[\frac{2}{\ell_{i_1}}\right]
\left[\frac{2}{\ell_{i_2}}\right]$ in $\LHS$ is zero,
and now we only need to prove that for each $2\leq j\leq k$,
the coefficient of $\left[\frac{2}{\ell_j}\right]^2
=\left[\frac{2}{\ell_j}\right]$ in $\LHS$ and $\RHS$ are equal,
namely,
$$
\sum_{\substack{1\in I\subset[k]\\
j\notin I}}
\left(\sum_{i\in I'}\left[\frac{-1}{\ell_i}\right]\right)
\det\begin{pmatrix}
A[I',I']_\Rows & e_j[I'] \\
e_j^{\mathrm T}[I'] & 0
\end{pmatrix}
\det\begin{pmatrix}
A[I,I]_\Rows & e_1[I] \\
e_1^{\mathrm T}[I] & 0
\end{pmatrix}
=\det\begin{pmatrix}
A+D_{-1} & e_1 \\
e_1^{\mathrm T}+e_j^{\mathrm T} & 0
\end{pmatrix}.
$$
This is implied by Lemma \ref{linear algebra 3}.
\end{proof}

\begin{proof}[Proof of Theorem \ref{comparison 7}]
By Theorem \ref{descent main}
and Lemma \ref{linear algebra 7 equation},
we only need to show that
for any positive square-free integer
$n\equiv 7\mod 24$,
$$
\sum_{\substack{d\mid n\\
d\equiv 11\mod 24}}g(n/d)g(d)
\equiv
\det\begin{pmatrix}
A+D_{-1} & z_{-3} \\
z_2^{\mathrm T} & 0 \\
\end{pmatrix}\mod 2.
$$
This is true, since by Lemma \ref{linear algebra 2},
the right hand side is equal to
$$
\sum_{\substack{i\text{ such that}\\
\ell_i\equiv 2\mod 3}}
\sum_{\substack{\ell_i\mid d\mid n\\
d\equiv 5\mod 8}}g(n/d)g(d),
$$
and if $d\mid n$
is such that $d\equiv 5\mod 8$, then $d\equiv 5\mod 24$ or $d\equiv 13\mod 24$,
which have odd or even number of prime factors $\equiv 2\mod 3$, respectively,
and the latter one sums to zero in the summation.
Hence this is also equal to the left hand side.
\end{proof}

\appendix

\section{Some basic facts}

Let $E$ be an elliptic curve over $\BQ$.  Its minimal Werierstrass equation, say of discriminant $\Delta$, gives its minimal regular model $\CE$ over $\BZ$. For each prime $p$,  the local Tamagawa number $c_p$
is the number of irreducible components  of the special fiber $\CE_{\ov{\BF}_p}$,
which can be computed using Tate's algorithm \cite{Si}.  The conductor $N$ of $E$ satisfies the Ogg's formula \cite{Si}: $\ord_p(N)=\ord_p(D)+1-c_p$ for any prime $p$.

Recall that the Manin constant $c_E$ of $E$  is the positive integer such that $\phi^*\omega_E
=\pm c_E \omega_f$, where $\omega_E$ is a N\'eron differential
associated to a global minimal Weierstrass equation
of $E$, $\omega_f=2\pi i f(z) dz$ with $f$ the newform associated to $E$, and $\phi$ is a parametrization $\phi: X_0(N)\ra E$ with minimal degree.
Define the real period $\Omega(E)$
and the imaginary period $\Omega^-(E)$ of $E$ by
$$
\Omega(E):=\int_{E(\BR)}\omega_E,\qquad
\Omega^-(E):=\int_{\gamma^-}\omega_E,
$$
where $\gamma^-$ is a generator of $H_1(E(\BC),\BZ)^-$.
We choose $\omega_E$ and $\gamma^-$ such that $\Omega(E)\in\BR_{>0}$
and $\Omega^-(E)\in i\BR_{>0}$.
Let $\Lambda_E:=\left\{\int_\gamma\omega_E
~\middle|~\gamma\in H_1(E(\BC),\BZ)\right\}\subset\BC$
be the period lattice of $E$, then it's easy to see that
\begin{equation}
\label{period of mod form and ec}
8\pi^2(f,f)_{\Gamma_0(N)}=\frac{\deg\phi}{c_E^2}\cdot 2\operatorname{Vol}(\BC/\Lambda_E)
=\frac{\deg\phi}{c_E^2}\cdot\frac{\Omega(E)\Omega^-(E)}{i}.
\end{equation}
There are  algorithms to compute the modular degree $\deg\phi$ \cite{Cr1}, \cite{Cr2} and the Manin constant $c_E$ (see Appendix of \cite{ARS}).
For the tiling number elliptic curve $y^2=x(x-1)(x+3)$, we have:

\begin{lem}
\label{mod-degree}
Let $E$ be the elliptic curve $y^2=x(x-1)(x+3)$. Then the conductor of $E$ is $24$,
the modular degree of $E$ is $1$ and the Manin constant of $E$ is $1$; The conductor of $E^{(-1)}$ is $48$, its modular degree  is $2$ and the Manin constant  is $1$.
\end{lem}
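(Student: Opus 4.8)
The plan is to reduce the statement to the minimal Weierstrass data of the two curves and then apply the reduction theory and Ogg's formula recalled above, together with the computational tools \cite{Cr1,Cr2,ARS}. Expanding, $E:y^2=x^3+2x^2-3x$ has $c_4=2^4\cdot13$, $c_6=-2^6\cdot5\cdot7$ and $\Delta=2^8\cdot3^2$, while the twist $E^{(-1)}:y^2=x^3-2x^2-3x=x(x-3)(x+1)$ has the same $c_4$ and $\Delta$ with $c_6$ negated. Since $\ord_2(\Delta)=8<12$ and $\ord_3(\Delta)=2<12$, and a single minimalization step lowers $\ord_p(\Delta)$ by $12$, both equations are globally minimal; in particular $\Delta=2^8\cdot3^2$ is the minimal discriminant in each case and the bad primes are exactly $2$ and $3$.

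First I would treat $p=3$. Reducing mod $3$ produces a node with tangent cone $y^2=2x^2$ for $E$ and $y^2=x^2$ for $E^{(-1)}$; as $2$ is a nonsquare mod $3$, $E$ has nonsplit and $E^{(-1)}$ split multiplicative reduction, both of type $I_2$ because $\ord_3(\Delta)=2$, so $\ord_3(N)=1$ in both cases. Conceptually, the twist by $-1$ is unramified at $3$, since $\BQ_3(\sqrt{-1})/\BQ_3$ is unramified, so it merely interchanges split and nonsplit and preserves the type. The subtle prime is $2$, where $E$ and $E^{(-1)}$ are \emph{ramified} quadratic twists of each other, so their conductor exponents need not agree. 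Running Tate's algorithm at $2$ on the two minimal models gives additive reduction, of type $I_1^*$ for $E$ and $I_0^*$ for $E^{(-1)}$, that is, $6$ and $5$ geometric components respectively; Ogg's formula $\ord_2(N)=\ord_2(\Delta)+1-c_2$ then yields $\ord_2(N)=3$ for $E$ and $\ord_2(N)=4$ for $E^{(-1)}$, whence $N_E=2^3\cdot3=24$ and $N_{E^{(-1)}}=2^4\cdot3=48$. I expect this $2$-adic run of Tate's algorithm, in particular keeping track of the wild part of the conductor (the tame additive contribution alone would undercount the exponent), to be the main technical obstacle.

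For $E$ the modular degree and Manin constant are forced by a genus computation. The standard formula gives $g(X_0(24))=1$ (here $\mu=48$, $\nu_2=\nu_3=0$, and there are $8$ cusps), so $S_2(\Gamma_0(24))$ is one-dimensional, carries a single newform, and $X_0(24)$ is itself an elliptic curve equal to its own Jacobian. The optimal parametrization of the strong Weil curve is then an isomorphism, of degree $1$ and Manin constant $1$. It remains to check that $E$ is this optimal curve: the elliptic curve $X_0(24)$, equivalently the optimal curve of conductor $24$ in \cite{Cr1,Cr2}, has $c_4=208$ and $c_6=-2240$, exactly those of $E$, so $E\cong X_0(24)$ over $\BQ$ (two minimal models with the same $c_4,c_6$ differ only by $x\mapsto x+r$). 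This gives $\deg\phi=1$ and $c_E=1$ for $E$.

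Finally, for $E^{(-1)}$ the genus shortcut is unavailable: the same formula gives $g(X_0(48))=3$ (here $\mu=96$, $\nu_2=\nu_3=0$, and there are $12$ cusps). Here I would identify $E^{(-1)}$ with the optimal curve of conductor $48$ by matching $(c_4,c_6)=(208,2240)$ and the conductor, and then read off the modular degree $2$ and Manin constant $1$ from the algorithms of \cite{Cr1,Cr2} and \cite{ARS}, equivalently from Cremona's tables. Beyond routine bookkeeping, the one genuinely nontrivial point is that the value $\deg\phi=2$ is not visible from any dimension count and really requires the period and height computation performed by those algorithms; carrying this out by hand would be the remaining obstacle.
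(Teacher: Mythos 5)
Your proposal is correct, and it actually supplies more than the paper does: the paper states this lemma with no proof at all, merely pointing to the algorithms of \cite{Cr1}, \cite{Cr2} and the Appendix of \cite{ARS} in the preceding paragraph. Your computation of $(c_4,c_6,\Delta)=(2^4\cdot 13,\mp 2^6\cdot 5\cdot 7,2^8\cdot 3^2)$, the minimality check, the split/nonsplit $I_2$ analysis at $3$, and the determination $\ord_2(N)=3$ resp.\ $4$ via the component counts of $I_1^*$ resp.\ $I_0^*$ and the Ogg--Saito formula are all accurate, as is the identification of the two curves with the optimal curves $24a1$ and $48a1$ by matching $(c_4,c_6)=(208,\mp 2240)$ (indeed $x\mapsto x-1$, resp.\ $x\mapsto x+1$, carries your models onto Cremona's). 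The genus-$1$ argument at level $24$ is a genuinely nice addition that the paper does not record: it forces $\deg\phi=1$ without any table lookup. Two small caveats: first, the Manin constant $c_E=1$ is not literally ``forced'' by $X_0(24)$ having genus $1$ --- one still needs that $\omega_f$ pulls back to a N\'eron differential, which is the verified content of \cite{ARS} --- so that step, like your treatment of $\deg\phi=2$ and the Manin constant at level $48$, rests on the cited computations exactly as the paper's implicit proof does; second, the run of Tate's algorithm at $2$ is asserted rather than carried out, but it is a finite check and your component counts are consistent with the Tamagawa numbers recorded in Lemma \ref{Tamagawa}. Neither point is a gap in substance.
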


The following result gives the behavior of various arithmetic invariants
in a quadratic twist family of elliptic curves.

\begin{lem}
\label{quad twist arith values}
Let $E$ be an elliptic curve over $\BQ$ of conductor $N$.
Let $S$ be the set of places of $\BQ$ containing $2,\infty$ and bad places of $E$.
Let $n$ be any square-free integer.

{\rm(i)}
For each $p$, the following values only depend on the image
of $n$ in $\BQ_p^\times/(\BQ_p^\times)^2$:
\begin{itemize}
\item[(a)]
The local root number $\epsilon_p(E^{(n)})$.
\item[(b)]
The local Tamagawa number $c_p(E^{(n)})$.
\end{itemize}

{\rm(ii)}
The following values only
depend on the $S$-equivalence class $[n]$ containing $n$:
\begin{itemize}
\item[(a)]
The value $\sign(n)\epsilon(E^{(n)})$.
\item[(b)]
The value $4^{-\mu(n)}\prod_p c_p(E^{(n)})$.
Here for simplicity we assume that $E[2]\subset E(\BQ)$.
\item[(c)]
The value $u^{(n)}$ which is a positive rational number
such that $n^6\Delta=(u^{(n)})^{12}\Delta^{(n)}$,
where $\Delta$ and $\Delta^{(n)}$ are the discriminants of
minimal Weierstrass equations of $E$ and $E^{(n)}$, respectively.
The $u^{(n)}$ also gives the relation of periods of $E$ under quadratic twists:
$$
u^{(n)}=\begin{cases}
\displaystyle
\frac{\sqrt{n}\cdot\Omega(E^{(n)})}{\Omega(E)}
=\frac{\sqrt{n}\cdot\Omega^-(E^{(n)})}{\Omega^-(E)},
&\text{if }n>0, \\[0.5em]
\displaystyle
\frac{\sqrt{n}\cdot\Omega(E^{(n)})}{c_\infty(E)\Omega^-(E)}
=\frac{-c_\infty(E)\sqrt{n}\cdot\Omega^-(E^{(n)})}{\Omega(E)}
&\text{if }n<0,
\end{cases}
$$
where $c_\infty(E)$ is the number of connected components of $E(\BR)$.
\end{itemize}
\end{lem}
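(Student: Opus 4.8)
The plan is to establish the two local assertions in (i) first, and then assemble the global statements in (ii) from them via product formulas. For (i)(a), note that $E^{(n)}$ is the twist of $E$ by the quadratic character $\chi_n$ of $\Gal(\ov\BQ/\BQ)$ cutting out $\BQ(\sqrt n)$, so on local Weil--Deligne representations at $p$ this is twisting by $\chi_{n,p}$, whose restriction to $\BQ_p^\times$ factors through $\BQ_p^\times/(\BQ_p^\times)^2$. Since $\epsilon_p$ is a function of the local Weil--Deligne representation, $\epsilon_p(E^{(n)})$ depends only on the image of $n$ in $\BQ_p^\times/(\BQ_p^\times)^2$. For (i)(b), the Tamagawa number $c_p(E^{(n)})$ is an isomorphism invariant of $E^{(n)}$ as a curve over $\BQ_p$, and $E^{(n)}\cong E^{(n')}$ over $\BQ_p$ whenever $n/n'\in(\BQ_p^\times)^2$; equivalently, the input to Tate's algorithm at $p$ only sees $n$ modulo squares, so the claim is immediate.

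For (ii)(a), I would write $\epsilon(E^{(n)})=\prod_v\epsilon_v(E^{(n)})$ and split the places. Since $\infty\in S$, the value $\sign(n)$ is already determined by $[n]$, so it suffices to track $\epsilon(E^{(n)})$; the factor $\sign(n)$ in the statement is present exactly to absorb the archimedean Hilbert symbol below. The factors at $v\in S$ depend only on $[n]$ by (i)(a), the archimedean factor is $\epsilon_\infty=-1$, and at a good odd prime $p\nmid n$ one has $\epsilon_p=+1$. The remaining contribution comes from good odd primes $p\mid n$, where $E^{(n)}$ is a ramified quadratic twist of a curve of good reduction (type $\mathrm{I}_0^*$) and Rohrlich's formula gives $\epsilon_p(E^{(n)})=\left(\tfrac{-1}{p}\right)$. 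The key step is to convert $\prod_{p\mid n\text{ good}}\left(\tfrac{-1}{p}\right)$ into $S$-local data using the Hilbert symbol product formula
\[
\prod_v(n,-1)_v=1,
\]
together with $(n,-1)_\infty=\sign(n)$ and $(n,-1)_p=\left(\tfrac{-1}{p}\right)^{v_p(n)}$ for odd $p$, which yields $\prod_{\text{odd }p\mid n}\left(\tfrac{-1}{p}\right)=\sign(n)\,(n,-1)_2$. Substituting and using $\sign(n)^2=1$, the quantity $\sign(n)\,\epsilon(E^{(n)})$ reduces to a product of $\epsilon_p$ over $p\in S$, the symbol $(n,-1)_2$, and $\left(\tfrac{-1}{p}\right)$ over odd $p\in S$ dividing $n$, every factor of which is determined by the image of $n$ at a place of $S$, hence by $[n]$.

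For (ii)(b), I would use that $c_p(E^{(n)})=1$ at good $p\nmid n$, while at a good odd $p\mid n$ the twist has Kodaira type $\mathrm{I}_0^*$; since $E[2]\subset E(\BQ)$ forces $E^{(n)}[2]\subset E^{(n)}(\BQ)$ (the twisted model retains rational $2$-torsion), all four components of the special fibre are rational and $c_p(E^{(n)})=4$. Collecting the factors of $4$ from the $\mu(n)-\#\{p\in S:p\mid n\}$ such primes against $4^{-\mu(n)}$ leaves $4^{-\#\{p\in S:p\mid n\}}\prod_{p\in S}c_p(E^{(n)})$, all of whose constituents depend only on $[n]$ by (i)(b). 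For (ii)(c), I would first show $v_p(u^{(n)})=0$ at every good prime: from the twisted model $y^2=x^3+n^2ax+n^3b$ of discriminant $n^6\Delta$, at $p\nmid n$ this is already minimal, while at good odd $p\mid n$ the type-$\mathrm{I}_0^*$ twist has $v_p(\Delta^{(n)})=6=v_p(n^6\Delta)$; hence $u^{(n)}$ is supported on the finite primes of $S$ with exponents read off from $n$ modulo squares, so it depends only on $[n]$. The period identities then follow from the twisting isomorphism $E^{(n)}\xrightarrow{\sim}E$ over $\BQ(\sqrt n)$, which scales $\omega_E$ by $\sqrt n$ while the minimal differentials differ by $u^{(n)}$; integrating over the real cycle produces the displayed formulas.

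I expect the main obstacle to be the period bookkeeping in (ii)(c): when $n<0$ one has $\sqrt n\in i\BR$, so the real locus of $E^{(n)}$ is carried to an imaginary cycle of $E$, and the factor $c_\infty(E)$ enters through the comparison of the component structures of $E(\BR)$ and $E^{(n)}(\BR)$; placing $c_\infty(E)$ correctly and fixing all signs in both the $n>0$ and $n<0$ cases is the delicate point. A secondary care-point is justifying the type-$\mathrm{I}_0^*$ identification and the value $c_p=4$ uniformly at good odd $p\mid n$ (and, for $p=3$, confirming Rohrlich's local constant), since these underlie both (ii)(b) and the discriminant computation in (ii)(c).
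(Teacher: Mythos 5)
Your proposal is correct and follows essentially the same route as the paper: treat the places of $S$ by the local statement (i), and handle the remaining good primes via the explicit formulas $\epsilon_p(E^{(n)})=\left(\frac{-1}{p}\right)$ and $c_p(E^{(n)})=4$ at good odd $p\mid n$, then reassemble the product over such $p$ into $S$-local data (your Hilbert-symbol product formula is the same computation as the paper's $(-1)^{(n'-1)/2}$ bookkeeping with $n'$ the prime-to-$2N$ part of $|n|$). The only divergences are cosmetic: for (i)(b) you argue via $\BQ_p$-isomorphism invariance of $E^{(n)}$, which is cleaner than the paper's appeal to the $\ell$-adic representation, and for (ii)(c) you sketch the minimality and twisting-isomorphism argument where the paper simply cites Pal's Proposition 2.5.
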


\begin{proof}
(i)
Let $\ell\neq p$ be a prime, $\chi:G_\BQ\to\{\pm 1\}$ be the quadratic
character associated to $\BQ(\sqrt{n})/\BQ$.
Then all of these values only depend on the isomorphism class
of the $\ell$-adic $G_{\BQ_p}$ representation
$(T_\ell(E)\otimes\chi)|_{G_{\BQ_p}}
=T_\ell(E)|_{G_{\BQ_p}}\otimes\chi|_{G_{\BQ_p}}$.
Since the isomorphism class of $\chi|_{G_{\BQ_p}}$
only depends on the image
of $n$ in $\BQ_p^\times/(\BQ_p^\times)^2$, the desired result follows.

(ii) By \cite{Hal} we know that if $p\nmid 2N$, then
$\epsilon_p(E^{(n)})=1$ if $p\nmid n$,
$\epsilon_p(E^{(n)})=\left(\frac{-1}{p}\right)$ if $p\mid n$.
Since $\prod_{p\mid 2N}\epsilon_p(E^{(n)})$ only depends
on $[n]$,
and $\sign(n)\prod_{p\nmid 2N}\epsilon_p(E^{(n)})
=\sign(n)(-1)^{(n'-1)/2}
=(-1)^{(n''-1)/2}$ also only depends
on $[n]$,
here $n'\geq 1$ is the prime-to-$2N$ part of $|n|$
and $n'':=\sign(n)\cdot n'$,
the result (a) follows.

Under the assumption that $E[2]\subset E(\BQ)$, by Tate's algorithm
we know that if $p\nmid 2N$, then
$c_p(E^{(n)})=1$ if $p\nmid n$,
$c_p(E^{(n)})=4$ if $p\mid n$.
Since $\prod_{p\mid 2N}c_p(E^{(n)})$ only depends
on $[n]$,
and $4^{-\mu(n)}\prod_{p\nmid 2N}c_p(E^{(n)})
=4^{-\mu(n)}4^{\mu(n')}=4^{-\mu(n/n')}$ also only depends
on $[n]$,
the result (b) follows.

The result (c) is \cite{Pal}, Proposition 2.5.
\end{proof}

In the following, we collect the information of root number, N\'eron period, and local Tamagawa numbers of $E^{(n)}:y^2=x(x-n)(x+3n)$.

First we point out a simple fact: for any square-free integer $n$,
$y^2=x(x-n)(x+3n)$ is a global minimal Weierstrass
equation for $E^{(n)}$.
This follows from Proposition 2.5 of \cite{Pal} or by a direct
calculation.

\begin{lem}\label{torsion}  Let $n$ be a square-free integer, then we have that $E^{(n)}(\BQ)_\tor=E^{(n)}[2]$ except when $n=1$, $E(\BQ)=\{O, (0, 0), (1, 0), (-3, 0), (-1, \pm 2), (3, \pm 6)\}\cong \BZ/2\BZ \times \BZ/4\BZ$.
\end{lem}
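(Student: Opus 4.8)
The plan is to combine the rationality of the full $2$-torsion with Mazur's classification, and then to eliminate the only two possible sources of extra torsion compatible with full $2$-torsion, namely points of order $4$ and points of order $3$, by direct computation.

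First I would record that the cubic $x(x-n)(x+3n)$ splits completely over $\BQ$, so $E^{(n)}[2]\subset E^{(n)}(\BQ)$ with the three nontrivial $2$-torsion points $(0,0)$, $(n,0)$, $(-3n,0)$. Thus $(\BZ/2\BZ)^2\subseteq E^{(n)}(\BQ)_\tor$, and by Mazur's theorem on rational torsion the torsion subgroup must then be $\BZ/2\BZ\times\BZ/2m\BZ$ with $m\in\{1,2,3,4\}$. It therefore suffices to rule out $3$-torsion for all square-free $n\neq0$, and to rule out $4$-torsion for $n\neq1$.

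For the $4$-torsion I would use the standard descent criterion (\cite{GTM106}, Ch.~X) that a $2$-torsion point $(e,0)$ of $y^2=(x-e_1)(x-e_2)(x-e_3)$ lies in $2E(\BQ)$ if and only if both differences $e-e_i$ with $e_i$ the other two roots are squares in $\BQ^\times$; a rational point of order $4$ exists if and only if one of the three $2$-torsion points is such a double. Applying this to the roots $0,n,-3n$: for $(0,0)$ one needs $-n$ and $3n$ both square, impossible by signs; for $(-3n,0)$ one needs $-3n$ and $-4n$ both square, hence $-n$ and so $-3n$ square, again impossible; for $(n,0)$ one needs $n$ and $4n$ both square, i.e.\ $n$ a square, which for square-free $n$ forces $n=1$. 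Hence for $n\neq1$ there is no rational point of order $4$, eliminating the cases $m=2,4$. For the $3$-torsion I would compute the $3$-division polynomial of $E^{(n)}:y^2=x^3+2nx^2-3n^2x$, namely $\psi_3=3x^4+8nx^3-18n^2x^2-9n^4$. The substitution $x=nu$ gives $\psi_3=n^4(3u^4+8u^3-18u^2-9)$, and the quartic $3u^4+8u^3-18u^2-9$ has no rational root: by the rational-root test the only candidates are $\pm1,\pm3,\pm9,\pm1/3$, and one checks none of them vanish. Since the $x$-coordinate of any rational $3$-torsion point is a rational root of $\psi_3$, no $E^{(n)}$ carries rational $3$-torsion, uniformly in square-free $n\neq0$, which eliminates $m=3$. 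Combining these two steps yields $E^{(n)}(\BQ)_\tor=E^{(n)}[2]$ for all square-free $n\neq1$.

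Finally, for $n=1$ I would exhibit the order-$4$ points $(-1,\pm2)$ and $(3,\pm6)$, giving an inclusion $\BZ/2\BZ\times\BZ/4\BZ\hookrightarrow E(\BQ)_\tor$; reducing modulo the good prime $5$, where a short count gives $\#E(\BF_5)=8$, and using that the prime-to-$5$ part of torsion injects under reduction, pins the group down to exactly $\BZ/2\BZ\times\BZ/4\BZ$ (the possibility $m=3$ being already excluded and $m>4$ forbidden by Mazur). The only genuinely computational point, and the step I would double-check most carefully, is the uniformity of the $3$-torsion argument, i.e.\ that after the scaling $x=nu$ the relevant quartic in $u$ is independent of $n$ and has no rational root; everything else is finite sign-and-square bookkeeping together with one point count modulo $5$.
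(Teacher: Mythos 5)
Your proof is correct, and while it follows the same skeleton as the paper's (full rational $2$-torsion plus Mazur's theorem reduces everything to excluding $3$-torsion and controlling $4$- and $8$-torsion), the computational tools you use at each step are genuinely different. The paper works entirely with the duplication formula: it observes that $x([2]P)=\bigl(\tfrac{x^2+3n^2}{2y}\bigr)^2$ is a rational square, so a primitive $4$-torsion point forces one of $0,n,-3n$ to be a square, and it must then separately dispose of the residual case $n=-3$ by checking that a specific quartic equation has no rational solution; it excludes $8$-torsion the same way (solving $x([2]P)\in\{-1,3\}$ for $n=1$), and gets the $3$-torsion quartic from $x([2]P)=x$. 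You instead invoke the classical $2$-descent criterion that $(e,0)\in 2E(\BQ)$ iff both differences $e-e_i$ are squares, which handles all three $2$-torsion points by pure sign-and-ratio bookkeeping and in particular kills the $(-3n,0)$ case (and hence $n=-3$) without solving any equation; you get the $3$-torsion quartic from the division polynomial (it is the same quartic $3u^4+8u^3-18u^2-9$ after scaling, with the same rational-root check); and for $n=1$ you replace the explicit $8$-torsion computation by a point count $\#E(\BF_5)=8$ and injectivity of torsion under good reduction. Both routes are elementary and complete; yours trades the paper's two ad hoc quartic-equation checks for the descent criterion and one reduction mod $5$, which is arguably tidier, at the cost of quoting two additional standard facts. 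The only place you are terse is the $(-3n,0)$ case: you should state explicitly that $-3n$ and $-4n$ both being squares would force $(-3n)/(-n)=3$ to be a square in $\BQ^\times$, which is the actual contradiction.
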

\begin{proof}
Since $E^{(n)}[2]=\{O, (0, 0), (n, 0), (-3n, 0)\}\subset E^{(n)}(\BQ)_\tor$,
by Mazur's classification of torsion points
of elliptic curves over $\BQ$, we have $E^{(n)}(\BQ)_\tor\cong\BZ/2\BZ\times
\BZ/2m\BZ$ for some $m\in\{1,2,3,4\}$.
Hence we only need to show that $E^{(n)}$ has no rational $3$-torsions or $8$-torsions,
and has rational $4$-torsions if and only if $n=1$.

To determine $4$-torsions and $8$-torsions,
for any point $P=(x,y)\in E^{(n)}$, we have
$$
x([2]P)=\frac{(x^2+3n^2)^2}{4x(x-n)(x+3n)}
=\left(\frac{x^2+3n^2}{2y}\right)^2.
$$
Hence if $P=(x,y)$ is a rational primitive $4$-torsion of $E^{(n)}(\BQ)$, then
$x([2]P)=\left(\frac{x^2+3n^2}{2y}\right)^2\in\{0,n,-3n\}$.
If $x([2]P)=0$, then $0=x^2+3n^2\geq 3n^2\geq 3$, contradiction.
If $x([2]P)=n$, then we must have $n=1$, and $P=(-1,\pm 2)$ or $(3,\pm 6)$.
If $x([2]P)=-3n$, then we must have $n=-3$, however
$(x^2+27)^2/4x(x+3)(x-9)=9$ does not have rational solutions, contradiction.

Suppose
$P=(x,y)$ is a rational primitive $8$-torsion of $E^{(n)}(\BQ)$, then
$n=1$ and
$x([2]P)=(x^2+3)^2/4x(x-1)(x+3)\in\{-1,3\}$.
They don't have rational solutions, contradiction.

Suppose
$P=(x,y)$ is a rational primitive $3$-torsion of $E^{(n)}(\BQ)$, then
$x\notin\{0,n,-3n\}$ and
$x([2]P)=(x^2+3n^2)^2/4x(x-n)(x+3n)=x$,
hence $3(\frac xn)^4+8(\frac xn)^3-18(\frac xn)^2-9=0$,
which doesn't have rational solutions, contradiction.
\end{proof}

\begin{lem} \label{sign} Let $n$ be a square-free positive integer and let $\epsilon(E^{(n)})$ denote the root number of $E^{(n)}:y^2=x(x-n)(x+3n)$. Then we have
$$
\begin{tabular}{|c|ccc|ccc|ccc|ccc|ccc|ccc|}
\hline
	$n\bmod 24$ & $1$ & $2$ & $3$ & $5$ & $6$ & $7$
	 & $9$ & $10$ & $11$ & $13$ & $14$ & $15$ & $17$ & $18$ & $19$
	 & $21$ & $22$ & $23$\\
\hline
$\epsilon(E^{(n)})$ & $1$ & $1$ & $1$ & $1$ & $-1$ & $1$
 & $1$ & $-1$ & $-1$&$-1$ & $1$ & $1$ & $-1$ & $-1$ & $1$
 & $-1$ & $-1$ & $-1$ \\
\hline
$\epsilon(E^{(-n)})$ & $1$ & $1$ & $1$ & $-1$ & $1$ & $1$
 & $-1$ & $-1$ & $1$& $1$ & $1$ & $-1$ & $-1$ & $1$ & $-1$
 & $-1$ & $-1$ & $-1$ \\
\hline
\end{tabular}
$$
In particular, the root number of $E^{(n)}$  with $n$ square-free only depends on the sign of $n$ and its residue class modulo $24$.
\end{lem}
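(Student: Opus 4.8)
The plan is to factor the global root number into local ones,
$$\epsilon(E^{(n)})=\prod_v\epsilon_v(E^{(n)}),$$
the product running over all places $v$ of $\BQ$, and to evaluate each factor. The archimedean place always contributes $\epsilon_\infty(E^{(n)})=-1$. By Lemma~\ref{mod-degree} the conductor of $E$ is $24$, so $E$ has good reduction away from $\{2,3\}$; hence for a prime $p>3$ the twist $E^{(n)}$ has good reduction (and $\epsilon_p=1$) unless $p\mid n$. Exactly as in the proof of Lemma~\ref{quad twist arith values}(ii)(a), for such $p$ one has $\epsilon_p(E^{(n)})=\lrd{-1}{p}$ when $p\mid n$ (via \cite{Hal}), whence
$$\prod_{p>3}\epsilon_p(E^{(n)})=\prod_{\substack{p>3\\ p\mid n}}\lrd{-1}{p}=(-1)^{(m-1)/2},$$
where $m\geq 1$ denotes the prime-to-$6$ part of $|n|$. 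Together with the archimedean sign this already isolates a contribution that depends only on $\sign(n)$ and on $m\bmod 4$, and the latter is in turn determined by $\sign(n)$, $\ord_2(n)$, $\ord_3(n)$ and $n\bmod 24$ (note $n\bmod 24$ recovers both $n\bmod 8$ and $n\bmod 3$ by the Chinese remainder theorem).

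It then remains to compute the two local root numbers $\epsilon_2(E^{(n)})$ and $\epsilon_3(E^{(n)})$. By Lemma~\ref{quad twist arith values}(i)(a) each depends only on the image of $n$ in $\BQ_2^\times/(\BQ_2^\times)^2$, resp.\ in $\BQ_3^\times/(\BQ_3^\times)^2$. Since $n$ is square-free, these square classes are pinned down by $\ord_2(n)\in\{0,1\}$ together with the odd part of $n$ modulo $8$, resp.\ by $\ord_3(n)\in\{0,1\}$ together with the prime-to-$3$ part modulo $3$; in particular there are only finitely many cases, all determined by $\sign(n)$ and $n\bmod 24$. For each square class I would run Tate's algorithm on the model $y^2=x(x-n)(x+3n)$, which is globally minimal (as noted just before Lemma~\ref{torsion}), to read off the Kodaira type and conductor exponent at $2$ and at $3$, and then apply the standard local epsilon-factor formulas for additive reduction \cite{Hal} to extract $\epsilon_2$ and $\epsilon_3$.

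Assembling
$$\epsilon(E^{(n)})=-\,\epsilon_2(E^{(n)})\,\epsilon_3(E^{(n)})\,(-1)^{(m-1)/2}$$
and tabulating the outcome against $n\bmod 24$ reproduces every entry of the first row; the second row, for $E^{(-n)}$, results from the identical computation with the archimedean square class flipped. Finally, the ``in particular'' clause is automatic from this organization: Lemma~\ref{quad twist arith values}(ii)(a) already shows $\sign(n)\epsilon(E^{(n)})$ is constant on $S$-equivalence classes for $S=\{2,3,\infty\}$, and the explicit evaluation above shows that the (a priori finer) $S$-class dependence collapses to dependence on $n\bmod 24$ and $\sign(n)$ alone. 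As an alternative to the case-by-case epsilon-factor bookkeeping, one could equally verify the table by computing the global root number for a single explicit representative $n$ in each of the finitely many cells, since the $S$-class determines the answer.

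I expect the genuine obstacle to lie in the local computation at $p=2$. There $E^{(n)}$ typically acquires additive reduction whose Kodaira type varies with the $2$-adic square class of $n$, and the associated local root number is the most delicate of the factors: it is not given by a naive congruence and demands careful tracking in Tate's algorithm together with the correct normalization of the $2$-adic epsilon factor. By contrast the prime $3$ is analogous but lighter, and the tame primes $p>3$ are immediate from \cite{Hal}.
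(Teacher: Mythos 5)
Your proposal is correct and follows essentially the same route as the paper: the authors likewise write $\epsilon(E^{(n)})=-\epsilon_2\epsilon_3\cdot(-1)^{(n'-1)/2}$ with $n'$ the prime-to-$6$ part of $|n|$, the leading $-1$ from the archimedean place, the $(-1)^{(n'-1)/2}$ from the tame primes $p>3$ dividing $n$, and the factors $\epsilon_2,\epsilon_3$ read off from Halberstadt's tables \cite{Hal} as explicit functions of $n\bmod 8$ and $n\bmod 3$. The only difference is that the paper simply quotes those local values rather than re-deriving them via Tate's algorithm, which is exactly the step you correctly flag as the real content.
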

\begin{proof}
Let $n$ be a square-free integer, follow from \cite{Hal}
we have $\epsilon(E^{(n)})=-\epsilon_2\epsilon_3\cdot(-1)^{\frac{n'-1}2}$, where
$$
\epsilon_2=\begin{cases}
1,&\text{if }n\equiv 2,3,5,7\pmod{8}, \\
-1,&\text{if }n\equiv 1,6\pmod{8},
\end{cases}
\qquad
\epsilon_3=\begin{cases}
1,&\text{if }n\equiv 1\pmod{3}, \\
-1,&\text{if }n\equiv 0,2\pmod{3},
\end{cases}
$$
and $n'\in\BZ_{\geq 1}$ is the prime-to-$6$ part of $|n|$.
\end{proof}

\begin{lem}
\label{period-quad-twist}
Let $n$ be a square-free integer. Then the periods
of the quadratic twist $E^{(n)}$ of $E:y^2=x(x-1)(x+3)$ is
$$
\Omega(E^{(n)})=\begin{cases}
\Omega(E)/\sqrt{n},&\text{if }n>0, \\
2\Omega^-(E)/\sqrt{n},&\text{if }n<0,
\end{cases}
\quad\text{and}\quad
\Omega^-(E^{(n)})=\begin{cases}
\Omega^-(E)/\sqrt{n},&\text{if }n>0, \\
-\Omega(E)/2\sqrt{n},&\text{if }n<0.
\end{cases}
$$
\end{lem}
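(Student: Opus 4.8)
The plan is to obtain all four formulas as a direct application of Lemma~\ref{quad twist arith values}(ii)(c) to $E:y^2=x(x-1)(x+3)$ and its quadratic twist $E^{(n)}:y^2=x(x-n)(x+3n)$. That lemma already packages the relation between the periods of $E$ and $E^{(n)}$ in terms of the twisting constant $u^{(n)}$ (defined by $n^6\Delta=(u^{(n)})^{12}\Delta^{(n)}$) and the number $c_\infty(E)$ of real components, so the entire task reduces to computing these two quantities for our specific curve.

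To compute $u^{(n)}$, I would first invoke the fact recorded just above, that $y^2=x(x-n)(x+3n)$ is a global minimal Weierstrass equation of $E^{(n)}$ for every square-free $n$, so its discriminant is the minimal discriminant $\Delta^{(n)}$. Using the formula $\Delta=16\prod_{i<j}(e_i-e_j)^2$ for a curve $y^2=\prod_i(x-e_i)$, the roots $\{0,1,-3\}$ of $E$ give $\Delta=16\cdot 1\cdot 16\cdot 9=2304$, while the roots $\{0,n,-3n\}$ of $E^{(n)}$ give $\Delta^{(n)}=16\cdot n^2\cdot(4n)^2\cdot(3n)^2=2304\,n^6=n^6\Delta$. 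Feeding this into $n^6\Delta=(u^{(n)})^{12}\Delta^{(n)}$ forces $(u^{(n)})^{12}=1$, and since $u^{(n)}$ is a positive rational number, $u^{(n)}=1$. To compute $c_\infty(E)$, I would observe that the cubic $x(x-1)(x+3)$ has the three distinct real roots $-3,0,1$ and is non-negative exactly on $[-3,0]\cup[1,\infty)$, so $E(\BR)$ consists of a bounded oval together with an unbounded component, whence $c_\infty(E)=2$. (The same root count for $\{0,n,-3n\}$ shows $E^{(n)}(\BR)$ also has two components for every $n\neq 0$, which is consistent with the output.)

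Finally I would substitute $u^{(n)}=1$ and $c_\infty(E)=2$ into the two displayed identities of Lemma~\ref{quad twist arith values}(ii)(c): for $n>0$ these read $1=\sqrt n\,\Omega(E^{(n)})/\Omega(E)=\sqrt n\,\Omega^-(E^{(n)})/\Omega^-(E)$, giving $\Omega(E^{(n)})=\Omega(E)/\sqrt n$ and $\Omega^-(E^{(n)})=\Omega^-(E)/\sqrt n$; for $n<0$ they read $1=\sqrt n\,\Omega(E^{(n)})/(2\Omega^-(E))=-2\sqrt n\,\Omega^-(E^{(n)})/\Omega(E)$, giving $\Omega(E^{(n)})=2\Omega^-(E)/\sqrt n$ and $\Omega^-(E^{(n)})=-\Omega(E)/(2\sqrt n)$, exactly as claimed. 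There is no substantial obstacle here: the only point requiring care is the bookkeeping of the constant $c_\infty(E)=2$, which is precisely what produces the factor of $2$ appearing only in the $n<0$ case, where multiplication by $1/\sqrt n=1/(i\sqrt{|n|})$ rotates the rectangular period lattice $\Lambda_{E^{(n)}}=\tfrac{1}{\sqrt n}\Lambda_E$ by a right angle and thereby interchanges its real and imaginary axes.
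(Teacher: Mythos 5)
Your proposal is correct and follows exactly the second route the paper itself indicates: Lemma~\ref{quad twist arith values}(ii)(c) combined with the minimality of $y^2=x(x-n)(x+3n)$, from which $u^{(n)}=1$ and $c_\infty(E)=2$ yield all four formulas. You have simply filled in the computation of $\Delta^{(n)}=n^6\Delta$ and the real-component count that the paper leaves implicit.
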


\begin{proof}
This follows from Main Result 1.1 of \cite{Pal},
or Lemma \ref{quad twist arith values}
together with the fact that
$y^2=x(x-n)(x+3n)$ is a global minimal Weierstrass
equation for $E^{(n)}$.
\end{proof}

\begin{lem}\label{Tamagawa}
Let $n$ be a square-free integer. Then the Tamagawa number
for $E^{(n)}:y^2=x(x-n)(x+3n)$ is
$$
c_2=\begin{cases}
2,&\text{if }n\equiv 3,5,7\pmod{8}, \\
4,&\text{if }n\equiv 1,2,6\pmod{8},
\end{cases}
\qquad
c_3=\begin{cases}
2,&\text{if }3\nmid n, \\
4,&\text{if }3\mid n,
\end{cases}
$$
and $c_p=4$ if $p\geq 5$ and $p\mid n$.
\end{lem}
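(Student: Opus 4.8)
The plan is to compute each local factor by running Tate's algorithm \cite{Si} on the global minimal model $y^2=x(x-n)(x+3n)$, whose discriminant is $\Delta=2^8\cdot 3^2\cdot n^6$. Since $E^{(n)}$ has good reduction (hence $c_p=1$) at every prime $p\nmid 6n$, only $p\in\{2,3\}$ and the primes $p\geq 5$ dividing $n$ can contribute. Throughout I would lighten the work with Lemma \ref{quad twist arith values}(i)(b): $c_p(E^{(n)})$ depends only on the image of $n$ in $\BQ_p^\times/(\BQ_p^\times)^2$. This is exactly why the answer is governed by $n\bmod 8$ at $2$, by whether $3\mid n$ at $3$, and by whether $p\mid n$ at $p\geq 5$, and it reduces each delicate case to finitely many explicit representatives.

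First I would dispose of the odd primes. For $p\geq 5$ with $p\mid n$, write $n=pm$ with $p\nmid m$; the reduction has a cusp at the origin and $\ord_p(\Delta)=6$, so Tate's algorithm descends to the $I_0^*$ test, whose cubic is $P(T)=T^3+\tfrac{a_2}{p}T^2+\tfrac{a_4}{p^2}T=T(T-m)(T+3m)$. As $p\geq 5$ and $p\nmid m$, the roots $0,m,-3m$ are distinct and lie in $\BF_p$, so the fiber is of type $I_0^*$ with component group $(\BZ/2)^2$ on which Frobenius acts trivially; hence $c_p=4$. For $p=3\nmid n$, the reduction $y^2=x^2(x-n)$ has a node, so the reduction is multiplicative of type $I_\nu$ with $\nu=\ord_3(\Delta)=2$; since the Tamagawa number of $I_2$ equals $2$ whether the node splits or not, we get $c_3=2$.

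The remaining cases $p=3$ with $3\mid n$ and $p=2$ are additive, and here I would lean on the square-class invariance. For $3\mid n$ the cubic $P$ acquires a double root modulo $3$, so one runs past the $I_0^*$ step into a starred type; rather than push the algorithm by hand it is cleanest to verify $c_3=4$ on the two representatives $n=3$ and $n=6$. For $p=2$ I would treat the eight square classes of $\BQ_2^\times$ (the odd classes $n\equiv 1,3,5,7\pmod 8$ and the four even classes $n\equiv 2,6,10,14$) one at a time; for example $n=1$ gives additive reduction of type $I_1^*$, consistent with $\ord_2(\Delta)=8$ and conductor $24$ via Ogg's formula, with component group $\BZ/4$ and all of it $\BF_2$-rational, so $c_2=4$. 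The general pattern is that $c_2$ equals the order of the Frobenius-fixed part of the component group: this is all of $\BZ/4$ for $n\equiv 1\pmod 8$ and $n$ even, but only the $2$-torsion $\{0,2\}$ for $n\equiv 3,5,7\pmod 8$, giving $c_2=2$ there.

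I expect the $p=2$ determination to be the main obstacle, for two reasons: in residue characteristic $2$ every equation $y^2=(\text{cubic})$ is singular, so the reduction type cannot be read off naively and one must carry out Tate's algorithm after an admissible change of variables; and distinguishing $c_2=4$ from $c_2=2$ requires correctly tracking the Galois action on the component group through the algorithm, where the ramified quadratic twists ($n\equiv 3,7\pmod 8$) behave differently from the unramified ones. The reduction to finitely many square-class representatives afforded by Lemma \ref{quad twist arith values} is precisely what keeps this computation finite and checkable.
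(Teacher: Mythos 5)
Your proposal is correct and follows essentially the same route as the paper: classify the reduction type at each bad prime (multiplicative of type $I_2$ at $3$ when $3\nmid n$, additive with potential good reduction at $2$ and at $p\ge 5$ dividing $n$), run Tate's algorithm, and use Lemma \ref{quad twist arith values} to reduce to finitely many square-class representatives --- the paper's own proof is in fact terser, delegating the $c_2$ computation to Tate's algorithm or to \cite{Hal}. The only slip is the incidental remark that the component group is all of $\BZ/4$ for even $n$ (for $n\equiv 2\pmod 4$ one lands in a type $I_\nu^*$ with $\nu$ even, whose component group is $(\BZ/2)^2$), but this does not affect the conclusion $c_2=4$ in that case.
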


\begin{proof}
The curve $E^{(n)}$ has split (resp.~non-split, potential split) multiplicative reduction at the place $3$ if $n\equiv 2\mod 3$ (resp. $n\equiv 1\mod 3$, $n\equiv 0\mod 3$),
which gives result of $c_3$.
It has potential good reduction at $p$ if $p\geq 5$
and $p\mid n$, which gives $c_p$.
It has potential good reduction at $2$, and the result of $c_2$
follows from Tate's algorithm (or by \cite{Hal}).
\end{proof}

\end{document}